\documentclass[a4paper,12pt]{amsart}
\usepackage[
    a4paper,
    textwidth=13.5cm,
    textheight=22.5cm,
    centering
]{geometry}
\usepackage[utf8]{inputenc}
\usepackage[T1]{fontenc}
\usepackage[english]{babel}
\usepackage{microtype}
\usepackage{amsmath}
\usepackage{amssymb}
\usepackage{amsthm}
\usepackage{amsfonts}
\usepackage{mathrsfs}
\usepackage{mathtools}
\usepackage{dsfont}
\usepackage[hidelinks]{hyperref}

\usepackage[
    backend=biber,
    style=numeric-comp,
    sorting=nyt,
    maxbibnames=99,
    maxcitenames=99
]{biblatex}

\DeclareDelimFormat{finalnamedelim}{\addcomma\space}
\numberwithin{equation}{section}

\theoremstyle{plain}
\newtheorem{theorem}{Theorem}[section]
\newtheorem{proposition}[theorem]{Proposition}
\newtheorem{lemma}[theorem]{Lemma}
\newtheorem{corollary}[theorem]{Corollary}
\theoremstyle{definition}
\newtheorem{definition}[theorem]{Definition}

\newtheorem{conjecture}[theorem]{Conjecture}

\theoremstyle{remark}
\newtheorem{remark}[theorem]{Remark}

\usepackage[normalem]{ulem}
\usepackage{comment}
\usepackage[dvipsnames]{xcolor}

\usepackage{tikz}
\usepackage{pgfplots}

\pgfplotsset{compat=newest}
\usepgfplotslibrary{fillbetween}
\usepackage{fancyhdr}
\title{$\mathbf{S}$-squareful values of binary quadratic forms}
\author{Sarah Sophie Pohl}
\date{}

\begin{document}

\maketitle

\begin{abstract}
Shute showed that the leading constant in the asymptotic formula for squareful values of the form $ax^2+by^2$ does not match with the conjecture established by Pieropan, Smeets, Tanimoto and Várilly-Alvarado. In this paper, $S$-squareful values of this form are considered for $S$ the set of primes that divide $a$ and $b$. We prove an asymptotic formula for these numbers and show that the leading constant of the asymptotic formula still does not match the conjecture. Furthermore, we consider the updated conjecture established by Chow, Loughran, Takloo-Bighash, and Tanimoto regarding the leading constant for Campana points in the given setting and show that this constant is consistent with our computed constant. 
\end{abstract}

\section{Introduction}
Quantitative questions regarding various types of points on algebraic varieties are central to arithmetic geometry. One may ask how rational points of bounded height on a projective variety are distributed. For projective varieties defined by homogeneous equations, this problem can be viewed as counting solutions to systems of homogeneous Diophantine equations subject to suitable size constraints. Manin's conjecture makes a prediction about the way the number of rational points of bounded height on certain projective varieties behaves asymptotically. The leading constant in this asymptotic formula was studied further by Peyre in \cite{13inShute}.
Analogous counting problems can also be studied in the setting of Campana points, which may be viewed as interpolating between rational and integral points. We introduce the notion of Campana points in Section~\ref{sec:campana}.\\
In recent years, the quantitative study of Campana points has attracted considerable attention, with asymptotic formulas being established in a variety of settings. For example, Van Valckenborgh studies squareful points on linear hypersurfaces in \cite{22inshute}. This has been generalised in \cite{BY}, where the authors have established an asymptotic formula for $m$-ful points on linear hypersurfaces, where $m$ is a positive integer. In a related work, Balestrieri, Brandes, Kaesberg, Ortmann, Pieropan and Winter \cite{Brandes} considers $m$-ful points on diagonal hypersurfaces.
Pieropan, Smeets, Tanimoto and Várilly-Alvarado \cite{16inshute} introduced a more general framework for the quantitative study of Campana points. In particular, they formulated a Manin-type conjecture predicting the asymptotic distribution of Campana points of bounded height on Fano Campana orbifolds. We refer to this conjecture as the PSTV-A conjecture. 
Since its formulation, the PSTV-A conjecture has been verified in a number of special cases, providing further evidence for the predicted asymptotic behavior of Campana points. Notable examples include the works of Santens \cite{Santens}, Xiao \cite{Xiao}, and Pieropan and Schindler \cite{Pieropan_Schindler}. 
While these results verify the PSTV-A conjecture in several instances, it is known that the conjecture does not hold in all cases.\\
Indeed, counterexamples have been constructed. 
In \cite{shute}, Shute computes the leading constant in an asymptotic formula for the number of squareful values represented by binary quadratic forms. Squareful numbers arise naturally in this setting as a particular instance of Campana points.
More precisely, Shute considers fixed coprime squarefree integers $a,b \in \mathbb{N}$ satisfying $a \equiv b \equiv 1 \mod 4$.  
For $B\in \mathbb{R}_{>0}$, he defines the counting function
\begin{align*}
    N(B) \coloneqq  \frac{1}{2}\#\{(x,y)\in \mathbb{Z}_{\neq 0}^2 : \gcd(x,y)=1, |x|,|y|\leq B, ax^2+by^2 \text{ squareful}\}.
\end{align*}
Following the work of Browning and Van Valckenborgh in \cite{2inshute}, Shute proves an asymptotic, where the leading constant can be given explicitly and only depends on $a$ and $b.$
The counting problem may be viewed through the framework of Campana points, as we shall explain in Section~\ref{sec:campana}.
Shute also shows that the leading constant differs from the constant obtained by the PSTV-A conjecture.
This motivates the question whether counting $S$-squareful numbers instead of squareful numbers would make the constants match again.
We define $S$-squareful numbers in the following way.
\begin{definition}\label{def:S-squareful}
    Let $n \in \mathbb{N}$ and $S$ a set of primes. We say that $n$ is  ${S}$\emph{-squareful} if $p$ divides $n$ implies $p^2$ divides $n$ for all primes $p$ that are not in $S$. We denote the set of $S$-squareful numbers by $\mathbb{Z}_{S\text{-ful}}$.
\end{definition}
We define the following counting function for $S$ a set of primes.
Let $a,b \in \mathbb{N}$ be squarefree and coprime such that $a\equiv b \equiv 1 \mod 4$. Furthermore let $S$ be the set of primes, which divide $ab.$ This idea follows \cite[Remark~6.6]{shute}. 
Define for $B \in \mathbb{R}_{>0}$ the following counting function
\begin{align}\label{def:N_S(B)}
    N_S(B)\coloneqq  \frac{1}{2} \# \left\{ (x,y) \in \mathbb{Z}_{\text{prim}}^2 : |x|,|y| \leq B , ax^2 + by^2 \in \mathbb{Z}_{S\text{-ful}} \right\}.
\end{align}
Since there exist coprime $x,y\in \mathbb{Z}$ such that $ax^2+by^2$ is $S$-squareful but not squareful, it is natural to consider a counting function for $S$-squareful numbers of the form $ax^2+by^2$ of bounded height. In this setting, one expects the leading constant in the corresponding asymptotic formula to be larger than in the case of squareful numbers.
We establish the following Theorem in Section~\ref{sec:asymptotic_for_N_S(B)}.
\begin{theorem}\label{thm:theorem_1}
For any $\epsilon>0$ there exists a constant $C\geq 0$ which depends only on $a,b$ such that $N_S(B) = CB + O(B^{43/44 + \epsilon})$. The constant $C$ can be given explicitly and only depends on $a$ and $b$. 
\end{theorem}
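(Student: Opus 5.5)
We follow the strategy of Browning--Van Valckenborgh and Shute, adapted to the weaker $S$-squareful condition. Since $\gcd(x,y)=1$ and $a,b$ are coprime and squarefree, any prime $p\in S$, say $p\mid a$, divides $ax^2+by^2$ only if $p\mid y$. Write $ax^2+by^2 = m\, n$, with $m$ the $S$-part and $n$ coprime to $ab$. The condition is then that $n$ is squareful, with no restriction on $m$. Every squareful $n$ can be written uniquely as $n = u^2 v^3$ with $v$ squarefree. So $N_S(B)$ becomes a sum over $S$-units $m$ (products of primes dividing $ab$), squarefree $v$ coprime to $ab$, and $u$ coprime to $ab$, of the number of primitive $(x,y)$ with $|x|,|y|\le B$ satisfying $ax^2+by^2 = m u^2 v^3$. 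Coprimality of $x,y$ forces $m$ to be composed of primes $p\mid a$ with $p\mid y$ (and the same with $a,b$ swapped). Substituting $y=py'$ shows which powers of $p$ can occur, so $m$ ranges over a controlled set, and the contribution of large $m$ is tiny.

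For fixed $m,v$, the equation $ax^2+by^2 = m v^3 u^2$ is a ternary quadratic form in $(x,y,u)$. Its primitive solutions are parametrised by finitely many binary quadratic forms. Equivalently, we count $(x,y)$ in the box lying in a union of lattices cut out by the local conditions modulo $v^3$, $m$ and powers of $2$, with the constraint that $(ax^2+by^2)/(mv^3)$ is a perfect square. Following \cite{2inshute}, we fix a conic-bundle parametrisation. For small $v\le V$ and $m\le M$ we count lattice points of bounded height on the conic $ax^2+by^2=mv^3u^2$, i.e.\ points in a region of a rank-two lattice of determinant about $mv^3$. A Heath-Brown/Browning-type count gives main term $c(m,v)B$, with error $O((mv^3)^{\theta}B^{1/2+\epsilon})$ or similar. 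The main term $c(m,v)$ is a product of local densities: the real density, which is the same as in Shute's case, and $p$-adic densities at $p\mid v$ and $p\mid 2ab$. The congruence $a\equiv b\equiv 1 \bmod 4$ makes the $2$-adic analysis match Shute's.

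Summing over $m\le M$ and $v\le V$ gives $CB$ after completing the sums. Here $C=\sum_{m,v} c(m,v)$, which converges since $c(m,v)\ll (mv^3)^{-1/2-\delta}$ on average, or at worst $\ll v^{-3/2+\epsilon}$. Compared with Shute's constant, $C$ gains an Euler factor at each $p\mid ab$ accounting for arbitrary $p$-adic valuation. The local factors are computed explicitly, which is what gives the explicit form of $C$.

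The main obstacle is the tail, i.e.\ large $v$ or large $m$. There we need a uniform upper bound for the number of primitive $(x,y)$ in the box with $mv^3 \mid ax^2+by^2$ and cofactor a square. We use two bounds:
\begin{itemize}
\item a divisor-type bound: the number of $u\le B/\sqrt{mv^3}$ times the number of representations by the form, giving roughly $B^{1+\epsilon}/(mv^3)^{1/2}$;
\item a lattice-point bound for $ax^2+by^2\equiv 0 \bmod v^3$, giving roughly $B^{2+\epsilon}/v^3 + B^\epsilon$.
\end{itemize}
We apply each in the range where it is better. The $S$-part $m$ has only $O((\log B)^{\omega(ab)})$ choices, so it costs only $B^\epsilon$. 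Optimising the cut-off parameters $V$ against the error terms reproduces an exponent of the shape $43/44+\epsilon$, exactly as in Shute's treatment. The point to check is that the extra freedom in the $S$-part never worsens these exponents.
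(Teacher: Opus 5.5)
Your architecture is the paper's: split off the $S$-part, write the squareful cofactor as $u^2v^3$, cut at $v=B^{\delta}$, and bound the range $v\ge B^{\delta}$ by $B^{1+\epsilon}v^{-3/2}$ via the $O(B^\epsilon)$ representation count. (Your second, lattice-point bound is not needed.) For small $v$ you use a uniform asymptotic on the conics $ax_1^2+bx_2^2=v^3\mathbf{p}^\mathbf{k}\mathbf{q}^\mathbf{l}x_3^2$. But you leave out exactly the quantitative input that produces the exponent. $43/44$ is not a generic outcome of optimisation. With a conic error $(v^3)^{\theta}B^{1/2+\epsilon}$, balancing $B^{1/2}V^{3\theta+1}$ against $BV^{-1/2}$ gives $B^{1-1/(12\theta+6)}$. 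So you need exactly $\theta=19/6$, i.e.\ an error $B^{1/2+\epsilon}v^{19/2+\epsilon}$, and then $\delta=1/22$. With $\theta$ left as ``or similar'' you get at best $O(B^{1-\eta})$ for some unspecified $\eta>0$.

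The paper obtains $v^{19/2}$ from the parametrisation estimate of \cite{20inshute}, with error $\ll (K_0B)^{1/2}\langle Q\rangle|\Delta_Q|^{-1/2}\log(K_0B)\operatorname{rad}(v)^{2+\epsilon}$; the radical factor comes from bounding the $\rho^{**}$-sums. This error is polynomial in $v$ only because of an ingredient you never mention: a zero of $Q_{(\mathbf{k},\mathbf{l}),v}$ with $\gcd(x_1,x_2)=1$ and $\Vert\mathbf{x}\Vert_\infty\ll v^3\mathbf{p}^\mathbf{k}\mathbf{q}^\mathbf{l}$. It is obtained by applying Cassels' bound to $ax^2+by^2-v\mathbf{p}^\mathbf{k}\mathbf{q}^\mathbf{l}z^2$ and lifting by a reflection so that $v\mid N_3$ and $\gcd(N_2,v)=1$. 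This gives $\Vert M\Vert_\infty\ll v^3\mathbf{p}^\mathbf{k}\mathbf{q}^\mathbf{l}$, hence $\langle Q\rangle,K_0\ll v^6$, hence the factor $v^{3+6-3/2+2}=v^{19/2}$.

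Second, your analysis of the $S$-part is wrong in a way that matters. Coprimality forces more than ``$p\mid a\Rightarrow p\mid y$''. It then gives $ax^2+by^2\equiv ax^2\not\equiv 0 \pmod{p^2}$, since $p\Vert a$ and $p\nmid x$, and symmetrically for $p\mid b$. So $m$ is a squarefree divisor of $ab$. There are exactly $2^{\omega(ab)}$ values, no ``large $m$'', and no Euler factor for ``arbitrary $p$-adic valuation''. The constant $C$ is the finite sum $\sum_{\mathbf{p}^\mathbf{k}\mid a}\sum_{\mathbf{q}^\mathbf{l}\mid b}c_{(\mathbf{p}^\mathbf{k},\mathbf{q}^\mathbf{l})}$ of Shute-type series, whose local factors depend on $\mathbf{p}^\mathbf{k}\mathbf{q}^\mathbf{l}$ through Legendre symbols. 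Your stated reason the $S$-part is harmless ($O((\log B)^{\omega(ab)})$ choices, hence only a $B^\epsilon$ loss) would fail if $m$ could be large. The conic error grows like a power of $m$, not like the number of $m$'s. It is the boundedness $m\mid ab$ that makes the extra $S$-freedom cost nothing in the exponent.
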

The constant $C$ has the form 
\begin{align*}
    C= \sum_{\mathbf{p}^\mathbf{k}\vert a}\sum_{\mathbf{q}^\mathbf{l}\vert b} c_{(\mathbf{p}^\mathbf{k},\mathbf{q}^\mathbf{l})}
\end{align*}
with $c_{(\mathbf{p}^\mathbf{k},\mathbf{q}^\mathbf{l})}$ given explicitly in (\ref{eq:constant_c_k_l_2}). 
Furthermore the leading constant obtained by the PSTV-A conjecture and the constant in Theorem~\ref{thm:theorem_1} do not agree. 
In $2025$, Chow, Loughran, Takloo-Bighash and Tanimoto established an updated conjecture for the leading constant in \cite{newconjecture}. We call this conjecture the CLT-BT conjecture and compute the leading constant predicted by the CLT-BT conjecture for the leading constant in the asymptotic formula for $N_S(B)$ in Section~\ref{sec:new-conjecture}.
In Section~\ref{seq:comparison_constant} we compare this constant with the constant of Theorem~\ref{thm:theorem_1} and prove the following Theorem.
\begin{theorem}\label{thm:constants_equal}
    Fix $a,b$ as before. Then the leading constant of $N_S(B)$ agrees with the constant in the CLT-BT conjecture.  
\end{theorem}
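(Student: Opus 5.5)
The plan is to compute both constants as explicit Euler products, reorganised in the same way, and compare them factor by factor. From Theorem~\ref{thm:theorem_1} we have $C=\sum_{\mathbf{p}^\mathbf{k}\vert a}\sum_{\mathbf{q}^\mathbf{l}\vert b} c_{(\mathbf{p}^\mathbf{k},\mathbf{q}^\mathbf{l})}$. Each summand $c_{(\mathbf{p}^\mathbf{k},\mathbf{q}^\mathbf{l})}$ comes from fixing the exact powers of the primes of $S$ dividing $ax^2+by^2$ (or the exponent pattern at $p\mid a$, $q\mid b$). It is given by (\ref{eq:constant_c_k_l_2}) as a real density times local densities at $p\mid 2ab$ times an Euler product over $p\nmid 2ab$, and this last product is the same for all summands. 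On the conjecture side, the CLT-BT constant computed in Section~\ref{sec:new-conjecture} is a sum over the relevant components of the Campana orbifold. These are the ``sectors'' or twisted components indexed by the choice of which boundary components at the places in $S$ the point reduces to. Each term is a Tamagawa-type product $\alpha\cdot\tau_\infty\prod_v\tau_v$ restricted to the corresponding adelic subset.

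First I match the index sets. I will show that the components contributing to the CLT-BT sum, namely those with nonempty adelic points and of maximal dimension or weight, are in bijection with the pairs $(\mathbf{p}^\mathbf{k},\mathbf{q}^\mathbf{l})$. The condition $ax^2+by^2\in\mathbb{Z}_{S\text{-ful}}$ imposes no restriction at $p\in S$, while the local solubility of $ax^2+by^2=\square\cdot(\text{unit})$ splits according to the splitting data at $p\mid ab$.

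Second, for a fixed index I compare the local factors. At $p\notin S\cup\{2\}$ the CLT-BT local density is the $p$-adic volume of the set of $(x,y)$ with $v_p(ax^2+by^2)\in\{0\}\cup[2,\infty)$. I will compute it via Hensel lifting and the count of solutions to $ax^2+by^2\equiv0\bmod p^k$, which is $(1+\left(\frac{-ab}{p}\right))$-dependent. Then I check that it equals the Euler factor in (\ref{eq:constant_c_k_l_2}) after the convergence factors $(1-1/p)$ cancel. At $p=2$, the condition $a\equiv b\equiv 1\bmod 4$ will be used to compute the $2$-adic density directly. At $p\in S$ I compute the volume of the adelic piece selected by the index and match it with the local factor of $c_{(\mathbf{p}^\mathbf{k},\mathbf{q}^\mathbf{l})}$. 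The real densities agree because both equal the area of $\{|x|,|y|\le 1\}$ with the same normalisation, together with the factor $\tfrac12$ and the primitivity factor $1/\zeta(2)$, which on the conjecture side appears via $\prod_p(1-p^{-2})$.

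The main obstacle will be the global factors in the CLT-BT constant: the $\alpha$-constant, the Brauer group or cohomological factor, and the volumes of the orbifold twists. These distinguish CLT-BT from PSTV-A and must account exactly for the discrepancy Shute observed. I would first compute the relevant $\mathrm{Br}$ or $H^1$ term for the conic $ax^2+by^2=z^2$ fibration. I expect the Hilbert-symbol condition $\prod_v (a,b)_v=1$ to cut the naive product by a factor of $2$ on each sector. I will verify that this factor is exactly compensated by the sum over $\mathbf{p}^\mathbf{k},\mathbf{q}^\mathbf{l}$ in $C$. Finally, summing the matched terms gives $C$ equal to the CLT-BT constant, which proves Theorem~\ref{thm:constants_equal}.
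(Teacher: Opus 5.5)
Your proposal has a genuine gap at exactly the point where CLT-BT differs from PSTV-A, because it uses the wrong shape for the conjectural constant. Conjecture~\ref{con:new_conjecture} is not a sum of untwisted Tamagawa volumes over ``sectors'' or ``orbifold twists''. Here it reads $\frac12\sum_{\beta}\hat\tau_{U,D}(\beta)$, with $\beta$ running over $\operatorname{Br}_1((X,D),L)/\operatorname{Br}\mathbb{Q}$. Each $\hat\tau_{U,D}(\beta)$ is a product of local integrals of the character $e^{2\pi i\operatorname{inv}_v\beta(x_v)}$ against $H_v(D,\cdot)\,d\tau_{U,v}$ over the local Campana sets.

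The steps your plan skips are the following.
\begin{itemize}
\item Identify this group, via the residue at $D_1$, with the (infinite) kernel of $N_{K/\mathbb{Q}}\colon K^\times/K^{\times 2}\to\mathbb{Q}^\times/\mathbb{Q}^{\times 2}$, where $K=\mathbb{Q}(\sqrt{-ab})$.
\item Represent its classes by $\beta_z=(t^2+\frac ba,z)$, with $z$ squarefree modulo $z\sim\operatorname{sf}(-abz)$.
\item Compute the twisted local integrals: $I_p(z)=(1-p^{-1})\bigl(1+p^{-1}+\bigl(1+\left(\frac{-ab}{p}\right)\bigr)\left(\frac{z}{p}\right)p^{-3/2}\bigr)$ for $p\nmid 2abz$, $I_p(z)=0$ for $p\mid z$ with $p\notin S$, and $I_2(z)=0$ for even $z$. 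This leaves exactly the $2^{n+m}$ classes $z=\mathbf{p}^\mathbf{k}\mathbf{q}^\mathbf{l}$.
\end{itemize}
Your plan matches, at $p\notin S\cup\{2\}$, the untwisted volume of $\{v_p(ax^2+by^2)\in\{0\}\cup[2,\infty)\}$ with an Euler factor of $C$. That only reproduces Shute's PSTV-A density $\sigma_p$, and hence his mismatch; the twist $\left(\frac zp\right)$ in the $p^{-3/2}$ term is what fixes it. The correction you anticipate, a factor $2$ per sector coming from $\prod_v(a,b)_v=1$, has no content. Since $a,b$ are constants, that product formula holds automatically and constrains nothing. The relevant pairing is the point-dependent symbol $(t^2+\frac ba,z)_v$.

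Your description of the counting side is also wrong, and this is why no index-by-index comparison can succeed. The constant $c_{(\mathbf{p}^\mathbf{k},\mathbf{q}^\mathbf{l})}$ in (\ref{eq:constant_c_k_l_2}) is not local factors times a single Euler product over $p\nmid 2ab$ shared by all summands. Both $\varrho_{(\mathbf{k},\mathbf{l}),v}$ and the condition $v\mathbf{p}^\mathbf{k}\mathbf{q}^\mathbf{l}\equiv1\bmod 4$ involve characters of $v$, such as $\left(\frac{bv\mathbf{p}^\mathbf{k}\mathbf{q}^\mathbf{l}}{p}\right)$ for $p\mid a/\mathbf{p}^\mathbf{k}$. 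So each $c_{(\mathbf{p}^\mathbf{k},\mathbf{q}^\mathbf{l})}$ is a sum of Euler products twisted by different quadratic characters.

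Take $(a,b)=(1,q)$ and set
\begin{itemize}
\item $f(p)=p^{-3/2}(1+p^{-1})^{-1}\bigl(1+\left(\frac{-q}{p}\right)\bigr)$,
\item $P_{+}=\prod_{p\nmid 2q}(1+f(p))$ and $P_{-}=\prod_{p\nmid 2q}\bigl(1+\left(\frac{-1}{p}\right)f(p)\bigr)$.
\end{itemize}
The paper's formulas then give the following.
\begin{itemize}
\item Counting side: $c_{(1,1)}=\frac{4\overline{c}_{a,b}}{\pi^2}\frac{P_{+}+P_{-}}{1+q^{-1}}$ and $c_{(1,q)}=q^{-1/2}c_{(1,1)}$.
\item Conjecture side: $\frac12\hat\tau_{U,D}(\beta_1)$ and $\frac12\hat\tau_{U,D}(\beta_q)$ equal $\frac{4\overline{c}_{a,b}}{\pi^2}\frac{1+q^{-1/2}}{1+q^{-1}}$ times $P_{+}$ and $P_{-}$ respectively.
\end{itemize}
The totals coincide, but no term matches any term. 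Moreover, the untwisted volume of the sector $q\nmid x$ is $\frac{4\overline{c}_{a,b}}{\pi^2}\frac{P_{+}}{1+q^{-1}}$. This is off from $c_{(1,1)}$ by the factor $1+P_{-}/P_{+}$, which lies strictly between $1$ and $2$, not by $2$.

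The paper closes this gap by expanding both constants as sums over $(\mathbf{k},\mathbf{l},d,e,r,s)$. It then applies the change of variables $(\mathbf{p}^\mathbf{k},\mathbf{q}^\mathbf{l},d,e,r,s)\mapsto(dr,es,\mathbf{p}^\mathbf{k}/r,\mathbf{q}^\mathbf{l}/s,r,s)$ together with quadratic reciprocity. When $\mathbf{p}^\mathbf{k}\mathbf{q}^\mathbf{l}\equiv3\bmod 4$ it adds a complementary involution, and the factor $1+\left(\frac{-ab}{\ell}\right)$ absorbs the reciprocity signs. If you want to organise the proof by sectors, each sector must receive $\frac12\sum_\beta$ of the \emph{twisted} integrals, with the factors at $p\in S$ restricted to that sector. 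Every sector then mixes all $2^{n+m}$ Brauer classes.
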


Our approach closely follows the framework developed in \cite{shute} and \cite{20inshute}. We adopt the general strategy,notation and methods introduced in these works. 

\section{Notation}
By $\mathbb{N}$ we denote the natural numbers starting by $1$.
From now on let $a,n\in \mathbb{N}$ with $a$ squarefree such that $a=\prod_{i=1}^n p_i$ with $p_i$ prime for $1\leq i \leq n$. Then for $\mathbf{k}\in \{0,1\}^n$ we set $\mathbf{p}^\mathbf{k}\coloneqq \prod_{i=1}^n p_i^{k_i}$. Analogously let $b,m \in \mathbb{N}$ with $b$ squarefree and coprime to $a$ and write $b$ as  $b=\prod_{i=1}^m q_i$ with $q_i$ prime for $1\leq i \leq m$. For $\mathbf{l}\in \{0,1\}^m$ we define $\mathbf{q}^\mathbf{l}\coloneqq \prod_{i=1}^m q_i^{l_i}$. We also always assume that $a \equiv b \equiv 1 \mod 4.$
For $k \in \mathbb{N}$ we define $\mathbb{Z}_{\text{prim}}^k\coloneqq \{\mathbf{x}\in \mathbb{Z}^k: \gcd(x_1,...,x_k)=1\}$. Thus the point $\mathbf{0}\in \mathbb{Z}^k$ is not contained in $\mathbb{Z}^k_{\text{prim}}$. With $\sum_{(s,t)\mod n}^+ $ we denote a summation for $s,t \in \mathbb{N}, $ such that $ 1 \leq s,t \leq n $, subject to the condition $\gcd(s,t,n)=1$. We denote the set of squareful numbers by  $\mathbb{Z}_{\text{ful}}$ and the set of squarefree numbers by $\mathbb{Z}_{\text{free}}$.
For two functions $f(x),g(x)$ we write write $f(x)=O(g(x))$ if there exists $x_1,c_1>0$ such that for all $x_1 < x$ we have $\vert f(x) \vert \leq c_1 g(x)$. Then we also write $f(x) \ll g(x).$\\
From now on let $K$ be a number field and let $\Omega_K$ denote the set of all places of $K$. For each non archimedean place $p \in \Omega_K$, we write $\nu_p: K \to \mathbb{Z} \cup \{\infty\}$ for the associated discrete valuation. Let $S_\infty \subset \Omega_K$ be the set of all archimedean places. Given a finite set $S \subset \Omega_K$ containing $S_\infty$, we denote the ring of $S$-integers in $K$ by   $\mathscr{O}_{K,S}$. For a non archimedean place $v\in\Omega_K$, we denote by $\mathscr{O}_v$ the ring of integers of the completion $K_v$ of $K$ at $v$.

\section{Campana Points}\label{sec:campana}
To make it self contained we introduce the concept of Campana points in this section. We use definitions from the first three sections of \cite{12inshute}. 

\begin{definition}
    A \emph{Campana orbifold} is a pair $(X,D)$, where $X$ is a smooth variety over $K$ and $$D=\sum_{\alpha \in \mathscr{A}}\epsilon_\alpha D_\alpha$$ is an effective Weil $\mathbb{Q}$-divisor of $X$ over $K$, where the $D_\alpha$  are prime divisors such that for all $\alpha \in \mathscr{A},$ either $\epsilon_\alpha = 1 $ or $\epsilon_\alpha$ takes the form $1-\frac{1}{m_\alpha}$ for some $m_\alpha \in \mathbb{Z}_{\geq 2}$ and the support $D_{\text{red}}=\sum_{\alpha \in \mathscr{A}}D_\alpha$ of $D$ has strict normal crossings on $X$.
    A Campana orbifold is called \emph{klt} if $\epsilon_\alpha \neq 1$ for all $\alpha \in \mathscr{A}$.
\end{definition}
\begin{definition}
    Let $(X,D)$ be a Campana orbifold.  A \emph{good integral model of} $(X,D)$ \emph{over} $\mathscr{O}_{K,S}$ is defined to be a pair $(\mathscr{X},\mathscr{D})$ where $\mathscr{X}$ is a regular, flat, proper $\mathscr{O}_{K,S}$-scheme which is a model of $X$ and $\mathscr{D}=\sum_{\alpha \in \mathscr{A}}\epsilon_\alpha \mathscr{D}_\alpha,$ where $\mathscr{D}_\alpha$ denotes the Zariski closure of $D_\alpha$ in $\mathscr{X}$. 
    \end{definition}
\begin{definition}
    Let $(X,D)$ be a Campana orbifold with  a good integral model $(\mathscr{X},\mathscr{D})$ over $\mathscr{O}_{K,S}$. A point $P\in (X \setminus D_{\text{red}})(K)$ is a \emph{Campana $\mathscr{O}_{K,S}$ point of $(\mathscr{X},\mathscr{D})$ } if for all $v \notin S$ and all $\alpha \in \mathscr{A}$, we have $n_v(\mathscr{D}_\alpha,P)=0$ if $\epsilon_\alpha = 1.$ If $\epsilon_\alpha \neq 1,$ so that $\epsilon_\alpha = 1- \frac{1}{m_\alpha}$ for some $m_\alpha \in \mathbb{Z}_{\geq 2}$, then either $n_v(\mathscr{D}_\alpha,P)=0$ or $n_v(\mathscr{D}_\alpha,P) \geq m_\alpha$.
    We denote the set of Campana $\mathscr{O}_{K,S}$-points of $(\mathscr{X},\mathscr{D})$ by $(\mathscr{X},\mathscr{D})(\mathscr{O}_{K,S})$. 
\end{definition}
Fix $a\equiv b \equiv 1 \mod 4$. We consider the Campana orbifold  $(X,D)=(\mathbb{P}^1,\frac{1}{2}V(ax^2+by^2))$ over $\mathbb{Q}$, together with the natural $\mathbb{Z}\left[\tfrac{1}{ab}\right]$-model 
$(\mathscr{X},\mathscr{D})$, by which we mean $\mathscr{X}=\mathbb{P}^1_{\mathbb{Z}[1/ab]} $ and $
\mathscr{D}=\tfrac{1}{2}V(ax^2+by^2) \subset \mathscr{X}.$
Furthermore we consider the height $H$ on $\mathbb{P}_{\mathbb{Q}}^1$ given by $H([x:y])=\max(\vert x \vert, \vert y \vert)$ for $(x,y)\in \mathbb{Z}_{\text{prim}}^2$.
In \cite{shute}, Shute works in the same setting but counts Campana $\mathbb{Z}$-points instead of Campana $\mathbb{Z}\left[\frac{1}{ab}\right]$-points.
For our setting we observe $D_{\text{red}}=V(ax^2+by^2)$ and therefore
$(X \setminus D_{\text{red}})(K)= \{[x:y]\in \mathbb{P}_{\mathbb{Q}}^1: ax^2+by^2 \neq 0\}$.
Thus we can write the set of Campana $\mathbb{Z}\left[ \frac{1}{ab}\right]$-points in the following way.
\begin{align*}
\left(\mathscr{X},\mathscr{D}\right)\left(\mathbb{Z}\left[ \frac{1}{ab}\right]\right) = \{[x:y]\in \mathbb{P}_{\mathbb{Q}}^1: ax^2+by^2 \neq 0, \text{ for all } v \notin S: \\n_v(\mathscr{D}_1,[x:y])=0 \text{ or }  n_v(\mathscr{D}_1,[x:y]) \geq 2  \} 
\end{align*}
Furthermore we obtain for all $p \notin S$ that $n_p(\mathscr{D}_1,P)= n_p(V(ax^2+by^2),P)=\nu_p(ax^2+by^2)$. 
Since $D=\frac{1}{2}V(ax^2+by^2) $ we have $ \epsilon_1=\frac{1}{2}, m_1=2$ and so we obtain that $\left(\mathscr{X},\mathscr{D}\right)\left(\mathbb{Z}\left[ \frac{1}{ab}\right]\right)$ is equal to the set of points $[x:y]\in \mathbb{P}_{\mathbb{Q}}^1$ such that $ ax^2+by^2\neq 0, ax^2+by^2 \in \mathbb{Z}_{S\text{-ful}}$.
Since we count these points with a bounded height, this motivates our counting problem
\begin{align*}
    &\#\left\{P\in \left(\mathscr{X},\mathscr{D}\right)\left(\mathbb{Z}\left[ \frac{1}{ab}\right]\right): H(P \leq B) \right\} \\&= \frac{1}{2}\#\{(x,y)\in \mathbb{Z}_{\text{prim}}^2: |x|,|y|\leq B, ax^2+by^2\neq 0, ax^2+by^2 \in \mathbb{Z}_{S\text{-ful}}\}\\
    &= N_S(B).
\end{align*}

\subsection{PSTV-A Conjecture}
Before stating the PSTV-A conjecture, we recall some necessary background. In Section~\ref{sec:PSTVA}, we show that the constant predicted by the PSTV-A conjecture does not agree with the leading constant appearing in $N_S(B)$. Nevertheless, the PSTV-A conjecture provides an important context for understanding the CLT-BT conjecture, so we first review the relevant setup and notation.

\begin{definition}
Let $(X,D)$ be a Campana orbifold, let $S\subset \Omega_K$ be a finite set of places containing all archimedean places, and let $(\mathscr{X},\mathscr{D})$ be a good integral model of $(X,D)$ over $\mathscr{O}_{K,S}$. Furthermore, let $T\subset \Omega_K$ be a finite set of places. For every non-archimedean place $v\notin S$, let $(\mathscr{X},\mathscr{D})(\mathscr{O}_v)\subseteq X(K_v)$ denote the set of local Campana $\mathscr{O}_v$-points. We define the set of \emph{Campana $\mathbb{A}_K^T$-points} of $(\mathscr{X},\mathscr{D})$ by $(\mathscr{X},\mathscr{D})(\mathbb{A}_K^T)=\prod_{v\notin S\cup T}(\mathscr{X},\mathscr{D})(\mathscr{O}_v)\times\prod_{v\in S\setminus T}X(K_v),$
The set of \emph{adelic Campana points} of $(\mathscr{X},\mathscr{D})$ is defined by $(\mathscr{X},\mathscr{D})(\mathbb{A}_K)=\prod_{v\notin S}(\mathscr{X},\mathscr{D})(\mathscr{O}_v)\times\prod_{v\in S}X(K_v).$
\end{definition}
\begin{definition}
Let $V$ be a variety defined over a field $F$ of characteristic $0$ and let $A \subset V(F)$.
We say that $A$ is of \emph{type I} if $A \subset W(F)$ for some proper closed
subset $W$ of $V$ , i.e. $A$ is not dense in $V$.
We say that $A$ is of \emph{type II} if $A \subset \phi(V^\prime(F))$ for $V^\prime$ a variety and $\phi : V^\prime \rightarrow V$ a generically finite dominant morphism of degree $\geq 2$.
We say that $A$ is \emph{thin} if it is contained in a finite union of subsets of $V(F)$ of
types I and II.
\end{definition}
\begin{definition}\label{def:def2.6_in_Shute}
Let $(X,D)$ be a Campana orbifold over $K$ with a good integral model $(\mathscr{X},\mathscr{D})$ over $\mathscr{O}_{K,S}$ and $(\mathscr{L},\Vert \cdot \Vert )$ be an adelically metricized big and nef line bundle on $X$ with associated divisor class $[L]$. Let $H_\mathscr{L}:X(K)\rightarrow\mathbb{R}_{\geq 0}$ denote the corresponding height function. We recall that the \emph{effective cone} $\Lambda_{\text{eff}}$ of a variety $X$ is defined as $$\Lambda_{\text{eff}}=\{[D]\in \operatorname{Pic}(X) \otimes_\mathbb{Z} \mathbb{R}: [D]\geq 0\}.$$
Let $[K_X]$ denote the canonical divisor class.\\ Define $a = \inf\{t \in \mathbb{R}: t[L]+[K_X]+[D] \in \Lambda_{\text{eff}}\}$ which contains $a[L] + [K_X] + [D]$. Furthermore let $b$ be the codimension of the minimal supported face of $\Lambda_{\text{eff}}$ which contains $a[L] + [K_X] + [D].$
\end{definition}
With these preparations we can formulate the PSTV-A conjecture.
\begin{conjecture}\label{conj:pstva}
Let $(X,D)$ be a klt Campana orbifold, and suppose that $-(K_X+D)$ is ample. Assume that the set of Campana points $(\mathscr{X},\mathscr{D})(\mathscr{O}_{K,S})$ is not itself thin. Then there is a thin set $\mathscr{T}$ of Campana $\mathscr{O}_{K,S}$-points such that $$\#\{(\mathscr{X},\mathscr{D})(\mathscr{O}_{K,S})\setminus \mathscr{T}: H_\mathscr{L}(P)\leq B\} \sim c_{\text{PSTV-A}}B^{a}(\log B)^{b-1},$$ as $B \rightarrow \infty$, where $a,b$ are as in Definition~\ref{def:def2.6_in_Shute} and $c_{\text{PSTV-A}}>0$ is an explicit constant described in \cite{16inshute}.
\end{conjecture}
By \cite[Proposition~3.15 and Theorem~1.1]{12inshute}, the Campana $\mathbb{Z}\left[ \frac{1}{ab}\right]$-points of $\left(\mathbb{P}_\mathbb{Q}^1,\frac12 V(ax^2+by^2)\right)$
satisfy the Campana Hilbert property and hence do not form a thin subset of $\mathbb{P}_{\mathbb{Q}}^1$.
Thus the PSTV-A conjecture applies to our setting. 

\subsection{CLT-BT Conjecture}
The leading constant of $N_S(B)$ can also be seen in the light of the CLT-BT conjecture. We give some more definitions to state the conjecture. For this, we follow \cite{newconjecture}.\\

As before let $(X,D)$ be a smooth klt Campana orbifold over $K$ with $D= \sum_{a \in \mathscr{A}} \epsilon_{\alpha}D_{\alpha}$ and let $(\mathscr{X},\mathscr{D})$ be a good integral model for $(X,D)$ over the ring of $S$-integers {$\mathscr{O}_{K,S}$} of $K$.

The constants $a((X, D),L), b(K,(X, D),L)$ and $\alpha((X, D),L)$ are the same as in Conjecture~\ref{conj:pstva} and are explained in \cite{16inshute}.

Let $\mathcal{L}$ be an adelically metricized log-adjoint rigid big and nef divisor so that $a((X, D),L)L+K_X+D$ is $\mathbb{Q}$-linearly equivalent to an effective $\mathbb{Q}$-divisor $E$ on $X$ with Iitaka dimension $0$. Furthermore, write $U=X\setminus \operatorname{Supp}(E)$ and $X^{\circ}= X\setminus \operatorname{Supp}(D) $.
For each $\alpha \in \mathscr{A}$ fix an adelic metrication for $D_{\alpha}$ and with $\mathscr{D}_\alpha$ we denote its adelically metricized divisor. Then $K_X$ is the adelic metrication for the canonical divisor, which is induced by adelic metrication of $\mathcal{L}$ and $\mathscr{D}_\alpha$. The space $U(F_v)$ can be endowed with a local Tamagawa measure $\tau_{U,v}$ by \cite{CT} for any $v \in \Omega_F$ and $H_v(D, \cdot)\tau_{U,v}$ is the associated local Tamagawa measure. Furthermore let $\lambda_v$$=L_v(\operatorname{Pic}(\overline{U}),1) $ and $\tau_{U, D}$ $=L^*(\operatorname{Pic}(\overline{U}),1)\prod_v \lambda_v^{-1}H_v(D,\cdot)\tau_{U,v}$ where $L^*(\operatorname{Pic}(\overline{U}),1)$ is the leading coefficient in the Laurent series expansion of $L(\operatorname{Pic}(\overline{U}),s)$ at $s=1$. At last, define $\hat{\tau}_{U,D}(b)$ as 
\begin{align}\label{def:tau}
     L^*(\operatorname{Pic}(\overline{U}),1)\prod_{v \in \Omega_K} \lambda_v^{-1}\int_{x_v \in X^\circ(K_v)_\epsilon} \frac{e^{2 \pi i \operatorname{inv}_v b (x_v)}H_v(D, x_v) d\tau_{U,v}}{H_v(a((X,D),L)L+K_X + D,x_v)}
\end{align}

\subsubsection{Brauer groups}
\begin{definition}
Let $D_\alpha$ be the divisor occurring as an irreducible component of $U\setminus X^\circ$ and  $\partial_D: \operatorname{Br} X^\circ \rightarrow H^1(\kappa(D),\mathbb{Q}/\mathbb{Z})$ the residue map at $D$. Then we de fine the \emph{Campana Brauer group of} $(X,D,L)$ as 
\begin{align*}
\operatorname{Br}((X, D),L)= \{\beta \in \operatorname{Br} X^\circ: m_\alpha \partial_{D_\alpha}(\beta)=0 \forall D_\alpha \subseteq U \setminus X^\circ\}.
\end{align*}
\end{definition}
Furthermore take $\operatorname{Br}_1((X, D),L)=\operatorname{Br}((X, D),L) \cap \operatorname{Br}_1 X^\circ$. 
Thus we can state the conjecture.
\begin{conjecture}\label{con:new_conjecture}
The conjectural leading constant $c(K,S,(\mathscr{X},\mathscr{D}),\mathscr{L})$ in the asymptotic formula for the
number of Campana points of bounded $\mathcal{L}$-height is
    \begin{align}
\label{eq:new-conjecture-constant}
\frac{\alpha((X,D),L)}{a((X,D),L)(b(K,(X,D),L)-1)!}
\sum_{\beta\in\operatorname{Br}_1((X,D),L)/\operatorname{Br}K}\hat{\tau}_{U,D}(\beta).
\end{align}
\end{conjecture}
 
\section{Asymptotic of $N_S(B)$}\label{sec:asymptotic_for_N_S(B)}
In this section we find an asymptotic formula for $N_S(B)$ with $S$ the set of primes dividing $ab$. Furthermore, we prove Theorem~\ref{thm:theorem_1} and give an explicit form for the constant depending only on $a$ and $b$. 
In this subsection we compute an asymptotic formula for $N_S(B)$. We show the following identity. 
\begin{lemma} Let $S$ be the set of prime divisors of $a$ and $b$ and let $B\in \mathbb{R}_{>0}$.
Then we can write $N_S(B)$ in the following way
    \begin{align*}
    N_S(B) =  \sum_{\mathbf{k}\in \{0,1\}^n}\sum_{\mathbf{l}\in \{0,1\}^m} \frac{1}{2}\# \left\{ (x,y) \in \mathbb{Z}_{\operatorname{prim}}^2 :\substack{ |x|,|y| \leq B\\  ax^2 + by^2 = z \mathbf{p}^\mathbf{k} \mathbf{q}^\mathbf{l}, \    z  \in \mathbb{Z}_{\operatorname{ful}} } \right\}.
\end{align*}

\end{lemma}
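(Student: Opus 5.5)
The plan is to show that, for each primitive pair $(x,y)$ counted in $N_S(B)$, there is exactly one pair $(\mathbf{k},\mathbf{l})\in\{0,1\}^n\times\{0,1\}^m$ with $ax^2+by^2=z\,\mathbf{p}^\mathbf{k}\mathbf{q}^\mathbf{l}$ and $z\in\mathbb{Z}_{\operatorname{ful}}$. Conversely, every pair counted on the right-hand side must be counted in $N_S(B)$. Summing the indicator functions over $(\mathbf{k},\mathbf{l})$ then gives the identity, since the factor $\tfrac12$ and the conditions $|x|,|y|\le B$ are the same on both sides.

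\textbf{Key step: $ab$ exactly divides $ax^2+by^2$ to the first power where it divides.} Fix a prime $p_i\mid a$ and let $(x,y)$ be primitive. Then $ax^2+by^2\equiv by^2 \pmod{p_i}$. If $p_i\mid ax^2+by^2$, then $p_i\mid y$, since $p_i\nmid b$ by coprimality. Because $\gcd(x,y)=1$, we get $p_i\nmid x$. As $a$ is squarefree, $p_i^2\nmid ax^2$, while $p_i^2\mid by^2$. Hence $\nu_{p_i}(ax^2+by^2)=1$. Likewise, for each $q_j\mid b$ we get $\nu_{q_j}(ax^2+by^2)\in\{0,1\}$.

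\textbf{Step 2: the forward direction.} Suppose $ax^2+by^2\in\mathbb{Z}_{S\text{-ful}}$, and note it is nonzero and positive since $a,b>0$ and $(x,y)\neq 0$. Set $k_i=\nu_{p_i}(ax^2+by^2)$ and $l_j=\nu_{q_j}(ax^2+by^2)$, which lie in $\{0,1\}$ by the key step. Put $z=(ax^2+by^2)/(\mathbf{p}^\mathbf{k}\mathbf{q}^\mathbf{l})$. Then $z$ is an integer coprime to $ab$, and for $p\notin S$ we have $\nu_p(z)=\nu_p(ax^2+by^2)$, which is $0$ or at least $2$. So $z$ is squareful.

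Uniqueness: if $z\,\mathbf{p}^\mathbf{k}\mathbf{q}^\mathbf{l}=z'\,\mathbf{p}^{\mathbf{k}'}\mathbf{q}^{\mathbf{l}'}$ with $z,z'$ squareful and, say, $k_i=1$, $k_i'=0$, then $\nu_{p_i}(z')=\nu_{p_i}(z)+1$. But the key step forces $\nu_{p_i}(ax^2+by^2)\le 1$, so $\nu_{p_i}(z)=0$ and $\nu_{p_i}(z')=1$. This contradicts $z'$ being squareful. The same argument applies to the $q_j$. The key point is that squarefulness of $z$ forbids any $p_i$ or $q_j$ from dividing $z$, since such a prime can appear in $ax^2+by^2$ only to the first power.

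\textbf{Step 3: the reverse direction.} If $ax^2+by^2=z\,\mathbf{p}^\mathbf{k}\mathbf{q}^\mathbf{l}$ with $z$ squareful, then for any prime $p\notin S$ we have $\nu_p(ax^2+by^2)=\nu_p(z)$, which is $0$ or at least $2$. Hence $ax^2+by^2\in\mathbb{Z}_{S\text{-ful}}$.

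Together these steps show that the sets on the right are pairwise disjoint and that their union is the set defining $N_S(B)$. The only real content, and thus the main obstacle, is the valuation bound $\nu_{p}(ax^2+by^2)\le1$ for $p\mid ab$. It uses squarefreeness, coprimality of $a$ and $b$, and primitivity of $(x,y)$. The hypothesis $a\equiv b\equiv 1 \bmod 4$ is not needed here.
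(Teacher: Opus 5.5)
Your proof is correct and follows essentially the same route as the paper: both rest on the fact that primitivity of $(x,y)$, together with $a,b$ squarefree and coprime, forces $\nu_p(ax^2+by^2)\le 1$ for every $p\mid ab$. This pins down $(\mathbf{k},\mathbf{l})$ and prevents any $p\mid ab$ from dividing the squareful part $z$. You are more explicit than the paper about the reverse inclusion and about the disjointness of the sets indexed by $(\mathbf{k},\mathbf{l})$; the paper leaves both implicit in its uniqueness claim.
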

\begin{proof}
By definition every $s \in \mathbb{Z}_{S\text{-ful}}$ can be written uniquely as $s=z \prod_{i=1}^n p_i^{k_{i}}\prod_{j=1}^m q_j^{l_{j}}$ where $k_i, l_j \in \mathbb{N}_{\geq0}$ for $1 \leq i \leq n, 1 \leq j \leq m$ and $z\in \mathbb{Z}_{\text{ful}}$ such that  $p_i \nmid z, q_j \nmid z$ for $1 \leq i \leq n, 1 \leq j \leq m$.
If $s$ is of the form $ax^2+by^2$ with $\gcd(x,y)=1$  it follows that $k_i,l_j \in \{0,1\}$ for $1\leq i \leq n$ and $1 \leq j \leq m$ because if there would be a prime $p$ such that $p\vert \mathbf{p}^\mathbf{k}$ with $p^2\vert z\mathbf{p}^\mathbf{k}\mathbf{q}^\mathbf{l}$ then we obtain that $p\vert x$ and $p\vert y$, which is a contradiction. The same holds if $p\vert \mathbf{q}^\mathbf{l} $.
By a similar argument observe that it is not possible that there exists a prime $p\vert ab$ which also divides $z$, since $z$ is squareful.
Thus, we can write any $S$-squareful number of the form $ax^2+by^2$ uniquely as $z \prod_{i=1}^n p_i^{k_{i}}\prod_{j=1}^m q_j^{l_{j}}$ with $z$ squareful and $k_i,l_j \in \{0,1\}$ for $1\leq i \leq n, 1 \leq j \leq m$.
\end{proof}
We define for $B \in \mathbb{R}_{>0}$ the counting function
\begin{align*}
\hat{N}_{(\mathbf{k},\mathbf{l})}(B) \coloneqq  \frac{1}{2}\# \{ (x,y) \in \mathbb{Z}_{\text{prim}}^2 : |x|,|y| \leq B ,  ax^2 + by^2 = z  \mathbf{p}^\mathbf{k} \mathbf{q}^\mathbf{l}, z \in \mathbb{Z}_{\text{ful}} \}.
\end{align*}
With this definition we can write $N_S(B)$ as
\begin{align}\label{eq:basic_N_S(B)}
     N_S(B) =  \sum_{\mathbf{k}\in \{0,1\}^n}\sum_{\mathbf{l}\in \{0,1\}^m} \hat{N}_{(\mathbf{k},\mathbf{l})}(B).
\end{align}
Every squareful number $z$ can be written uniquely as $u^2v^3$ with $u,v\in \mathbb{Z}$ and $v$ sqaurefree. Therefore we can write $ax^2+by^2$ uniquely as $u^2v^3\mathbf{p}^\mathbf{k} \mathbf{q}^\mathbf{l}$ with $v$ squarefree. Furthermore if a point $(x,y)$ is counted by $\hat{N}_{(\mathbf{k},\mathbf{l})}(B)$, the variable $u$ is uniquely determined up to the sign. Therefore, we can count the points as $(x,y,u)$ rather than just as $(x,y)$ and divide the number of points by two. Therefore
\begin{align*}
    \hat{N}_{(\mathbf{k},\mathbf{l})}(B)  =\frac{1}{4}\# \left\{ (x,y,u) \in \mathbb{Z}^3 : \substack{\gcd(x,y)=1, |x|,|y| \leq B,\\ ax^2 + by^2 = u^2v^3 \mathbf{p}^\mathbf{k}  \mathbf{q}^\mathbf{l}, \ v \in \mathbb{Z}_{\text{free}}} \right\}.
\end{align*}
Now we define for fixed $v\in \mathbb{Z}$ the counting function
\begin{align*}
    N_{(\mathbf{k},\mathbf{l}),v}(B) \coloneqq  \frac{1}{2} \# \left\{(x,y,u) \in \mathbb{Z}^3 : \substack{\gcd(x,y)=1, \vert x\vert, \vert y \vert \leq B,\\ ax^2+by^2 = u^2v^3 \mathbf{p}^\mathbf{k}  \mathbf{q}^\mathbf{l}} \right\}.
\end{align*}
Since $a,b,v$ are pairwise coprime and squarefree we get $\mu^2(abv)=1 $ if and only if $a,b,v$ are pairwise coprime and squarefree. Thus we obtain
\begin{align}\label{eq:hat_N}
    \hat{N}_{(\mathbf{k},\mathbf{l})}(B) = \frac{1}{2} \sum_{v=1}^\infty \mu^2(abv)N_{(\mathbf{k},\mathbf{l}),v}(B).
\end{align}
For a fixed $\delta >0$ we write $\hat{N}_{(\mathbf{k},\mathbf{l})}(B)$ as $Z_1+Z_2$ with 
$$Z_1 = \frac{1}{2} \sum_{v\geq B^{\delta}} \mu^2(abv)N_{(\mathbf{k},\mathbf{l}),v}(B) \text{ and } Z_2 = \frac{1}{2} \sum_{v< B^{\delta}} \mu^2(abv)N_{(\mathbf{k},\mathbf{l}),v}(B).$$
We show, that the size of $Z_1$ is smaller than the expected main term and we have to focus on the second sum. First we consider $Z_1$ and therefore $v \geq B^\delta$.
With the fact that $\vert x \vert, \vert y \vert \leq B $ we get that $u^2v^3 \mathbf{p}^\mathbf{k}  \mathbf{q}^\mathbf{l} = ax^2 +by^2 \ll B^2 $ and so $u \ll \frac{B}{(\mathbf{p}^\mathbf{k}  \mathbf{q}^\mathbf{l})^\frac{1}{2}v^\frac{3}{2}}$. This means we have $O\left( \frac{B}{(\mathbf{p}^\mathbf{k}  \mathbf{q}^\mathbf{l})^\frac{1}{2}v^\frac{3}{2}}\right)$ choices for $u$. \\
We consider Theorem \cite[Theorem~1.11]{1inshute}.

\begin{theorem}\label{thm:1_in_Shute}
    Let $Q\in \mathbb{Z}[x_1,...,x_{\nu}]$ be a quadratic polynomial, with $\nu \geq 2$ and $\epsilon>0$. Let $Q_0$ denote the quadratic part of $Q$. Assume that $Q$ is irreducible over $\mathbb{Q}$ and that $\operatorname{rank}(Q_0)\geq2$. Then 
    \begin{align}
        \#\{\mathbf{t}\in \mathbb{Z}^\nu : \vert \mathbf{t}\vert \leq B, Q(\mathbf{t})=0 \}=O(B^{\nu-2+\epsilon}).
    \end{align}
\end{theorem}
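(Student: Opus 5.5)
The plan is to reduce to counting integer points on a family of binary conics, one conic for each choice of the remaining $\nu-2$ coordinates. Each conic should carry $O(B^{\epsilon})$ points, uniformly, apart from a sparse set of degenerate fibres.

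\emph{Normalisation.} Since $\operatorname{rank}(Q_0)\geq 2$, there is a fixed unimodular change of variables $\mathbf{t}\mapsto M\mathbf{t}$, $M\in \operatorname{GL}_\nu(\mathbb{Z})$, after which the binary form $Q_0(t_1,t_2,0,\dots,0)$ has nonzero discriminant. Such a change enlarges the box only by a constant factor depending on $Q$, and it preserves both irreducibility and the rank of the quadratic part. So we may assume this nondegeneracy from the start.

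\emph{Reduction to conics.} Fix $\mathbf{s}=(t_3,\dots,t_\nu)$ with $|\mathbf{s}|\leq B$; there are $O(B^{\nu-2})$ such choices. Write
\[
Q(t_1,t_2,\mathbf{s})=\alpha t_1^2+\beta t_1t_2+\gamma t_2^2+L_1(\mathbf{s})t_1+L_2(\mathbf{s})t_2+R(\mathbf{s}),
\]
where $\alpha,\beta,\gamma$ are fixed, $L_1,L_2$ are linear and $R$ is quadratic. Multiplying by a fixed constant and completing the square twice turns $Q=0$ into
\[
X^2-DY^2=N(\mathbf{s}).
\]
Here $D=\beta^2-4\alpha\gamma\neq 0$ is fixed. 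The quantities $X,Y$ are integer linear forms in $(t_1,t_2,\mathbf{s})$ with fixed coefficients, and $(t_1,t_2)\mapsto(X,Y)$ is injective for fixed $\mathbf{s}$. Finally $N(\mathbf{s})\in\mathbb{Z}[\mathbf{s}]$ has degree at most $2$, so $N(\mathbf{s})\ll B^2$ and $|X|,|Y|\ll B$.

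\emph{Counting on each fibre.} The count splits by the sign and nature of $D$ and by whether $N(\mathbf{s})$ vanishes.
\begin{itemize}
\item If $N(\mathbf{s})\neq 0$ and $D$ is a square $s^2$, then $(X-sY)(X+sY)=N(\mathbf{s})$. The divisor bound gives $O(d(N))=O(B^{\epsilon})$ solutions.
\item If $N(\mathbf{s})\neq 0$ and $D<0$, the number of representations of $N$ by the form is again $\ll d(N)\ll B^{\epsilon}$.
\item If $N(\mathbf{s})\neq 0$ and $D>0$ is not a square, the solutions split into $\ll_D d(N)$ orbits under the unit group of $\mathbb{Z}[\sqrt D]$. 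Each orbit meets the region $|X|,|Y|\ll B$ in $O(\log B)$ points, since consecutive elements grow geometrically. This gives $O(B^{\epsilon})$.
\item If $N(\mathbf{s})=0$ and $D$ is not a square, the only solution is $X=Y=0$, so the fibre contains at most one point.
\end{itemize}

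\emph{Main obstacle: the degenerate fibres.} The remaining case is $N(\mathbf{s})=0$ with $D=s^2$ a square. Then the fibre is a union of two lines and may contain $\asymp B$ points. Here irreducibility of $Q$ is used. If $N$ were the zero polynomial, the identity $cQ=(X-sY)(X+sY)$ would factor $Q$ over $\mathbb{Q}$, a contradiction. So $N\not\equiv 0$, and a nonzero polynomial in $\nu-2$ variables vanishes at $O(B^{\nu-3})$ points of the box; for $\nu=2$, $N$ is a nonzero constant and there are no such fibres. These fibres therefore contribute $O(B^{\nu-3}\cdot B)=O(B^{\nu-2})$.

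Summing over all fibres gives $O(B^{\nu-2}\cdot B^{\epsilon})+O(B^{\nu-2})=O(B^{\nu-2+\epsilon})$. The step needing the most care is the uniformity in $N(\mathbf{s})$ in the Pell-type case. The bound on the number of unit orbits by $d(N)$, and the $O(\log B)$ points per orbit, must hold with constants depending only on $Q$ and not on $\mathbf{s}$.
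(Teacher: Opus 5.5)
The paper itself contains no proof of this statement. It is quoted from the cited source and used only once, with $\nu=2$, for the conics $ax^2+by^2=u^2v^3\mathbf{p}^\mathbf{k}\mathbf{q}^\mathbf{l}$. Judged on its own, your fibration into binary conics is correct and complete up to routine details:

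\begin{itemize}
\item \emph{Normalisation.} This step is legitimate. Diagonalising $Q_0$ over $\mathbb{Q}$ gives a rational plane on which $Q_0$ is nondegenerate, and a basis of its saturated lattice extends to a basis of $\mathbb{Z}^\nu$. Take as first basis vector a primitive $e_1$ in that plane with $Q_0(e_1)\neq0$. This also secures $\alpha\neq0$, which your completion of the square tacitly uses.
\item \emph{Irreducibility.} Completing the square gives an identity $X^2-DY^2-N(\mathbf{s})=cQ$ with $c\in\mathbb{Z}\setminus\{0\}$. Here $X,Y$ are affine rather than linear forms, since $L_1,L_2,R$ need not be homogeneous, but this changes nothing. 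This identity is exactly what your argument for $N\not\equiv0$ needs when $D$ is a square.
\item \emph{Uniformity in the real quadratic case.} The point you flag is immediate. Let $\xi=X+Y\sqrt D$ run through an orbit $\pm\xi_0\eta^k$, with conjugate $\xi'$. The relations $|\xi|\,|\xi'|=|N(\mathbf{s})|\geq1$ and $|\xi|,|\xi'|\ll B$ force $B^{-1}\ll|\xi|\ll B$. So $k$ lies in an interval of length $O(\log B/\log\eta)$, independently of $\xi_0$ and $\mathbf{s}$.
\end{itemize}

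Compared with the bare citation, your route buys explicit uniformity. The fibre counts enter only through $\tau(|N(\mathbf{s})|)$, the number of zeros of $N$ in a box, and lattice points on lines. Hence the implied constant depends only on:
\begin{itemize}
\item $\epsilon$;
\item $Q_0$, via the change of variables, $D$, $\alpha$ and $\eta$;
\item a bound of the form $B^{O(1)}$ for the lower-order coefficients.
\end{itemize}
This is precisely what the paper needs. It applies the theorem to $ax^2+by^2-u^2v^3\mathbf{p}^\mathbf{k}\mathbf{q}^\mathbf{l}$ for every $u,v$ and then sums over $u$. A constant allowed to depend on $Q$, which is all the quoted statement asserts, would not suffice there. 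In that application $D=-4ab<0$ and $N$ is a nonzero constant. Your argument then reduces to the divisor bound for representations by the definite form $X^2+4abY^2$, uniformly in the constant term.
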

We define for fixed $u,v,\mathbf{k},\mathbf{l}$ the curve $\tilde{Q}_{u,v,\mathbf{k},\mathbf{l}}(\mathbf{x})\coloneqq  ax_0^2+bx_1^2-u^2v^3\mathbf{p}^\mathbf{k}\mathbf{q}^\mathbf{l}$.\\ Since $\tilde{Q}_{u,v,\mathbf{k},\mathbf{l}} \in \mathbb{Z}[x_1,x_2]$ is irreducible in $\mathbb{Q}$ and rank$(ax_0^2+bx_1^2)=2$ we obtain with Theorem~\ref{thm:1_in_Shute} for all $\epsilon>0$:
\begin{align}
    \# \{(x,y) \in \mathbb{Z}^2 : \vert x\vert, \vert y \vert  \leq B, ax^2 + by^2 = u^2v^3  \mathbf{p}^\mathbf{k}  \mathbf{q}^\mathbf{l} \} = O(B^\epsilon).
\end{align}
In our setting, we only consider $(x, y) \in \mathbb{Z}^2$ satisfying $|x|, |y| \leq B$. Since $u \ll \frac{B}{(\mathbf{p}^\mathbf{k}\mathbf{q}^\mathbf{l})^\frac{1}{2}v^\frac{3}{2}}$, when summing over $u$, we may drop the norm condition on $x$ and $y$ in our counting function, since it is automatically enforced by the size of the right-hand side. 
So we obtain
\begin{align*}
   \# \left\{ (x,y) \in \mathbb{Z}_{\mathrm{prim}}^2 : a x^2 + b y^2 = u^2 v^3 \mathbf{p}^{\mathbf{k}} \mathbf{q}^{\mathbf{l}} \right\} = O(B^\epsilon). 
\end{align*}
This leads to the estimate $ N_{(\mathbf{k},\mathbf{l}),v}(B) \ll \frac{B^{1+\epsilon}}{(\mathbf{p}^\mathbf{k}\mathbf{q}^\mathbf{l})^\frac{1}{2}\sqrt{v^3 }}$.
Since $v \geq B^\delta $ we get
\begin{align}\label{bound:Z_1}
    Z_1 \ll \frac{B^{1+\epsilon - \frac{1}{2}\delta }}{(\mathbf{p}^\mathbf{k}\mathbf{q}^\mathbf{l})^\frac{1}{2}}.
\end{align}
Thus the impact of $Z_1$ is very small and we consider $Z_2$.\\
We think of $ax^2+by^2=u^2v^3 \mathbf{p}^\mathbf{k} \mathbf{q}^\mathbf{l} $ as a conic with $a,b$ and $v^3 \mathbf{p}^\mathbf{k} \mathbf{q}^\mathbf{l}$ fixed.
We introduce the following definition from \cite{20inshute}:
\begin{definition}
    A norm $\Vert \cdot \Vert: \mathbb{R}^3 \rightarrow \mathbb{R}_{\geq 0}$ is called \emph{isometric to the supremum norm $\Vert \cdot \Vert_{\infty}$} if there exists an invertible matrix $g \in \operatorname{GL}_3(\mathbb{R})$ such that $\Vert \mathbf{x} \Vert = \Vert g\mathbf{x} \Vert_{\infty} $ for all $\mathbf{x} \in \mathbb{R}^3$. 
\end{definition}
We denote by $Q$ a non-singular quadratic form in three variables, coefficients in $\mathbb{Z}$ and $\Vert \cdot \Vert $ a norm isometric to the supremum norm. Furthermore we define
\begin{align*}
    N_{\Vert \cdot \Vert}(Q,B) = \# \{\mathbf{x} \in \mathbb{Z}^3 :\gcd(x_1,x_2)=1, Q(\mathbf{x})=0, \Vert \mathbf{x} \Vert \leq B \} 
\end{align*}
and $
    Q_{(\mathbf{k}, \mathbf{l}),v}(\mathbf{x}) =ax_1^2 + bx_2^2 - v^3 \mathbf{p}^\mathbf{k}  \mathbf{q}^\mathbf{l}  x_3^2.$\\
We set $\Vert \mathbf{x} \Vert_{\text{new}} \coloneqq \max\left(|x_1|,|x_2|,\sqrt{\frac{\mathbf{p}^\mathbf{k}  \mathbf{q}^\mathbf{l}v^3}{a+b}}|x_3|\right)$ and $\Vert \mathbf{x} \Vert \coloneqq \max(\vert x_1 \vert, \vert x_2 \vert)$. We obtain the following properties about the norm $\Vert \cdot \Vert_{\text{new}}$

\begin{lemma}
      The norm $\Vert \mathbf{x} \Vert_{\text{new}} $ is isometric to the supremum norm of $\emph{x}$ for all $\emph{x} \in \mathbb{R}^3$. And on the conic $Q_{(\mathbf{k}, \mathbf{l}),v}$ we have $\Vert \mathbf{x} \Vert_{\text{new}} = \Vert \mathbf{x} \Vert = \max(\vert x_1 \vert, \vert x_2 \vert)$  
\end{lemma}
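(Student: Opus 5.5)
The lemma has two parts: the norm is isometric to the supremum norm, and on the conic it reduces to $\max(|x_1|,|x_2|)$. For the first part we exhibit the matrix directly. Put $\lambda := \sqrt{\mathbf{p}^\mathbf{k}\mathbf{q}^\mathbf{l}v^3/(a+b)}$. This is a positive real number, since $a,b\geq 1$ and $\mathbf{p}^\mathbf{k}\mathbf{q}^\mathbf{l}v^3\geq 1$. Let
\[
g=\operatorname{diag}(1,1,\lambda)\in \operatorname{GL}_3(\mathbb{R}).
\]
Then $g$ is invertible because $\lambda\neq 0$, and $\Vert g\mathbf{x}\Vert_\infty=\max(|x_1|,|x_2|,\lambda|x_3|)=\Vert \mathbf{x}\Vert_{\text{new}}$ for every $\mathbf{x}\in\mathbb{R}^3$. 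That $\Vert\cdot\Vert_{\text{new}}$ is a norm also follows immediately, as it is the pullback of a norm by an invertible linear map.

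For the second part, let $\mathbf{x}\in\mathbb{R}^3$ lie on the conic, so that $ax_1^2+bx_2^2=v^3\mathbf{p}^\mathbf{k}\mathbf{q}^\mathbf{l}x_3^2$. We use the trivial bound
\[
ax_1^2+bx_2^2\leq (a+b)\max(|x_1|,|x_2|)^2,
\]
which holds because $a,b>0$. Combining the two gives
\[
\lambda^2x_3^2=\frac{v^3\mathbf{p}^\mathbf{k}\mathbf{q}^\mathbf{l}x_3^2}{a+b}=\frac{ax_1^2+bx_2^2}{a+b}\leq \max(|x_1|,|x_2|)^2 .
\]
Taking square roots yields $\lambda|x_3|\leq \max(|x_1|,|x_2|)$. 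Hence the third entry never realises the maximum strictly, and $\Vert\mathbf{x}\Vert_{\text{new}}=\max(|x_1|,|x_2|)=\Vert\mathbf{x}\Vert$.

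Both steps are routine and there is no real obstacle. The only points to check are positivity: $\lambda>0$ is needed for invertibility, and the coefficients $a,b$ must be positive for the weighted-average inequality, which holds since $a,b\in\mathbb{N}$. The factor $a+b$ in the definition of $\Vert\cdot\Vert_{\text{new}}$ is chosen exactly so that this inequality closes.
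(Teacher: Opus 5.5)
Your proposal is correct and follows the paper's proof: the paper uses the same diagonal matrix $g=\operatorname{diag}(1,1,Cv^{3/2})$ with $C=\sqrt{\mathbf{p}^\mathbf{k}\mathbf{q}^\mathbf{l}/(a+b)}$, which is your $\lambda$ written differently. For the second part the paper only says ``by the choice of $C$'', and your inequality $ax_1^2+bx_2^2\leq (a+b)\max(|x_1|,|x_2|)^2$ is exactly the justification it leaves implicit.
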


\begin{proof}
 We set $C=  \sqrt{\frac{\mathbf{p}^\mathbf{k}  \mathbf{q}^\mathbf{l}}{a+b}}$ and  $g=\operatorname{diag}\left(1,1,Cv^{3/2}\right).$
    Then $g$ is invertible and $g \in \mathrm{GL}_3(\mathbb{R})$. Thus  
    \begin{align*}
        \Vert \mathbf{x} \Vert_{\text{new}} = \max\left(|x_1|,|x_2|,\sqrt{\frac{\mathbf{p}^\mathbf{k}  \mathbf{q}^\mathbf{l}v^3}{a+b}}|x_3|\right) =\Vert g\mathbf{x}\Vert_\infty.
    \end{align*}
     which gives us that $\Vert \cdot \Vert_{\text{new}}$ is isometric to the supremum norm. 
    By the choice of $C$ we obtain that $\max(\vert x_1 \vert, \vert x_2 \vert) = \max(\vert x_1 \vert, \vert x_2 \vert, C  v^{\frac{3}{2}} \vert x_3 \vert )$
     for all $\mathbf{x}$ on the conic $Q_{(\mathbf{k},\mathbf{l}),v}$. 
   \end{proof}
With the above notation we obtain $N_{(\mathbf{k},\mathbf{l}),v}(B) = N_{\Vert \cdot \Vert_{\text{new}}}(Q_{(\mathbf{k},\mathbf{l}),v},B).$
We continue with a linear change of variables that turns $Q_{(\mathbf{k},\mathbf{l}),v}$ into a quadratic form $Q$ that satisfies $Q(0,1,0)=0.$ To do this, we assume that there exists a nontrivial solution that gets counted by $N_{(\mathbf{k},\mathbf{l}),v}(B)$. We consider the smallest $(t_{12},t_{22},t_{32}) \in \mathbb{Z}^3 $ such that $ \gcd(t_{12},t_{22})=1, \vert \vert (t_{12},t_{22},t_{32}) \vert \vert_{\text{new}}  \leq B $ and $ a(t_{12})^2+b(t_{22})^2 = v^3\mathbf{p}^\mathbf{k}\mathbf{q}^\mathbf{l}(t_{32})^2. $ We can find $t_{11}, t_{21} \in \mathbb{Z}$ with $t_{11}t_{22}-t_{21}t_{12} = 1$ and $\vert t_{11} \vert, \vert t_{21} \vert \leq \max(\vert t_{12} \vert, \vert t_{22} \vert).$ Then we set 
\begin{align}\label{eq:M}
    M = \begin{pmatrix}
        t_{11} & t_{12} & 0 \\
        t_{21} & t_{22} & 0 \\
        0 & t_{32} & 1
    \end{pmatrix}.
\end{align}
We set $Q(\mathbf{x})= Q_{(\mathbf{k},\mathbf{l}),v}(M\mathbf{x})$ and 
$\Vert \mathbf{x}\Vert_{\text{new}}^{\prime}= \Vert M\mathbf{x}\Vert_{\text{new}}$.
By the choice of $t_{11}, t_{21}$ we get that the first $2 \times 2$ minor is in $\operatorname{SL}_2(\mathbb{Z})$ and so under this transformation the condition $\gcd(x_1,x_2)=1$ is preserved. Thus 
$$N_{(\mathbf{k},\mathbf{l}),v}(B) = N_{\Vert \cdot \Vert_{\text{new}}}(Q_{(\mathbf{k},\mathbf{l}),v},B) = N_{\Vert \cdot \Vert_{\text{new}}^{\prime}}(Q,B).$$
Explicitly we have
\begin{align*}   Q(\mathbf{x})=Q_{(\mathbf{k},\mathbf{l}),v}(M\mathbf{x})
 = \tilde{a}x_1^2 + \tilde{b}x_1x_2 + \tilde{e}x_2x_3+\tilde{f}x_3^2
 \end{align*}
with $ \tilde{a} = at_{11}^2+bt_{21}^2$, $\tilde{b} = 
2at_{11}t_{12}+2bt_{21}t_{22},\tilde{d}=0 ,\tilde{e}=- 2  v^3  \mathbf{p}^\mathbf{k}  \mathbf{q}^\mathbf{l}  t_{32} $ and $ \tilde{f}=- v^3 \mathbf{p}^\mathbf{k} \mathbf{q}^\mathbf{l}$. With this construction, we obtain further properties, which have to be fulfilled, that such a triple $(t_{12},t_{22},t_{32})$ exists. 
\begin{lemma}
    If there exists a solution $(t_{12},t_{22},t_{32}) \in \mathbb{Z}^3$ with the above conditions, then $2\nmid v$ and $v\mathbf{p}^\mathbf{k}\mathbf{q}^\mathbf{l}\equiv 1 \mod 4$
\end{lemma}
\begin{proof}
    Assume $2 \vert v$. By the construction of $t_{12},t_{22}$ and $t_{32}$ we obtain $   at_{12}^2 + bt_{22}^2 \equiv 0 \mod 8.$ Since $a \equiv b \equiv 1 \mod 4$ there are no possible $t_{12},$ $t_{22} \in \mathbb{Z}$ such that $\gcd(t_{12},t_{22})=1$ and $  at_{12}^2 + bt_{22}^2 \equiv 0 \mod 8.$
    It is left to show that $v\mathbf{p}^\mathbf{k}\mathbf{q}^\mathbf{l}\equiv 1 \mod 4$.
    Since $a\equiv b \equiv 1 \mod 4$ we obtain that  $v^3\mathbf{p}^\mathbf{k}\mathbf{q}^l\equiv 1 \mod 4$ or $v^3\mathbf{p}^\mathbf{k}\mathbf{q}^l\equiv 3 \mod 4 $. Assume that the latter is true, then by the construction of $t_{12},t_{22}$ and $t_{32}$ we obtain $at_{12}^2 + bt_{22}^2 \equiv  3 t_{32}^2 \mod 4$. Thus $t_{12}^2+t_{22}^2 \equiv 3 \mod 4$ or $t_{12}^2+t_{22}^2 \equiv 0 \mod 4$. Since $\gcd(t_{12},t_{22})=1$ no of the relations has a solutions in $t_{12},t_{22}$ and $t_{32}$. Thus we need $v^3\mathbf{p}^\mathbf{k}\mathbf{q}^\mathbf{l}\equiv1 \mod 4$.
\end{proof}
Furthermore set 
\begin{align*}
  \Pi \coloneqq  \begin{pmatrix}
    \tilde{b} & \tilde{e} & 0 \\
    -\tilde{a} & -\tilde{d} & -\tilde{f}\\
    0 & \tilde{b} & \tilde{e}
\end{pmatrix}.
 \end{align*}
Then we obtain $\operatorname{adj}(\Pi)\Pi \begin{pmatrix}
    s^2 \\ st \\ t^2
\end{pmatrix} = \operatorname{det}(\Pi) \begin{pmatrix}
    s^2 \\ st \\ t^2
\end{pmatrix},$ where $\operatorname{det}(\Pi) = \Delta_Q$.
We use the forms of 
\cite[Section~2]{20inshute} to describe $\underline{q}(s,t)\coloneqq \Pi \begin{pmatrix}
    s^2 \\ st \\ t^2
\end{pmatrix}$ for our case and define 
 \begin{align*}
     g(s,t) \coloneqq   \tilde{a}s^2  + \tilde{f}t^2  \text{ and }  L(s,t) \coloneqq  \tilde{b}s + \tilde{e}t. 
 \end{align*}
 Then we obtain $\underline{q} = (\underline{q_1},\underline{q_2},\underline{q_3})^T = (\underline{q_1}(s,t),\underline{q_2}(s,t),\underline{q_3}(s,t))^T $ with 
 \begin{align}\label{q_1}
     \underline{q_1}(s,t) &= sL(s,t) = (2at_{11}t_{12}+2bt_{21}t_{22})s^2  - 2  v^3 \mathbf{p}^\mathbf{k} \mathbf{q}^\mathbf{l}  t_{32}  s  t,
 \end{align}
 \begin{align}\label{q_2}
     \underline{q_2}(s,t) = -g(s,t) = -(at_{11}^2+bt_{21}^2)s^2 +  v^3   \mathbf{p}^\mathbf{k}  \mathbf{q}^\mathbf{l}  t^2 
 \end{align}
 and 
 \begin{align}\label{q_3}
     \underline{q_3}(s,t) = tL(s,t) = (2at_{11}t_{12}+2bt_{21}t_{22})s t -  2v^3 \mathbf{p}^\mathbf{k} \mathbf{q}^\mathbf{l}  t_{32}  t^2.
 \end{align}
 Set $\lambda = \gcd(\underline{q_1},\underline{q_2},\underline{q_3})$. 
 With these forms, we obtain the following lemma.
 \begin{lemma}
 Assume the previous notation. Then we obtain  
      \begin{align*}
     N_{\Vert \cdot \Vert_{\text{new}}^{\prime}}(Q,B) = \hat{N}(Q,B) + O(1)
 \end{align*} with 
 \begin{align}\label{lemma:N_hat}
     \hat{N}(Q,B) = \#\left\{(s,t) \in \mathbb{Z}_{\operatorname{prim}}^2 : t>0, \Vert \underline{q} \Vert_{\operatorname{new}}^{\prime} \leq \lambda B , \gcd\left(\frac{\underline{q_1}}{\lambda}, \frac{\underline{q_2}}{\lambda} \right)=1\right\}.
 \end{align}
 \end{lemma}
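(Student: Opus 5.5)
Since $Q(0,1,0)=0$, the point $\mathbf{e}_2=(0,1,0)$ is a rational point on the conic $Q(\mathbf{x})=0$, and the plan is to parametrise the conic by lines through it, in the style of \cite[Section~2]{20inshute}. Take an integer solution $\mathbf{x}$ with $\gcd(x_1,x_2)=1$ and $\mathbf{x}\neq\pm\mathbf{e}_2$. Since the coefficient $\tilde d$ of $x_2^2$ vanishes, $Q(\mathbf{x})=0$ reads
\begin{align*}
x_2(\tilde b x_1+\tilde e x_3) = -(\tilde a x_1^2+\tilde f x_3^2).
\end{align*}
Set $(s,t)=(x_1,x_3)/\gcd(x_1,x_3)$, which is primitive. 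Then $x_1:x_3=s:t$, and substituting gives
\begin{align*}
x_2 L(s,t) = -g(s,t)\cdot \gcd(x_1,x_3).
\end{align*}
So $\mathbf{x}$ is proportional to $(sL(s,t),-g(s,t),tL(s,t))=\underline{q}(s,t)$. Conversely, $\operatorname{adj}(\Pi)\Pi=\Delta_Q I$ shows that $\underline{q}(s,t)$ always lies on the conic.

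Next we fix the scaling. Because $\mathbf{x}$ is an integer vector proportional to $\underline{q}$ with $\gcd(x_1,x_2)=1$, we must have $\mathbf{x}=\pm\underline{q}/\lambda'$, where $\lambda'=\gcd(\underline{q_1},\underline{q_2})$. Here $\lambda'$ should coincide with $\lambda=\gcd(\underline{q_1},\underline{q_2},\underline{q_3})$ exactly when $\gcd(\underline{q_1}/\lambda,\underline{q_2}/\lambda)=1$. This is why we impose that coprimality condition explicitly: it picks out exactly the $(s,t)$ for which $\underline{q}/\lambda$ has coprime first two coordinates. The norm condition $\Vert\mathbf{x}\Vert'_{\text{new}}\le B$ then becomes $\Vert\underline{q}\Vert'_{\text{new}}\le\lambda B$, using homogeneity of the norm.

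For the correspondence itself:
\begin{itemize}
\item The pairs $(s,t)$ and $(-s,-t)$ give the same $\underline q$, since $\underline q$ is quadratic.
\item The normalisation $t>0$ removes this ambiguity.
\item The sign $\pm$ of $\mathbf{x}$ is absorbed by replacing $(s,t)$ with the other primitive representative. We check this using $\mathbf{x}=\pm\underline{q}/\lambda$ together with the sign of $x_3$ relative to $t$.
\end{itemize}
We have to verify carefully that the resulting map is a bijection, and this sign bookkeeping is the fiddly step. The cleanest way is to recover $(s,t)$ as $\pm(x_1,x_3)/\gcd(x_1,x_3)$ with the sign chosen so that $t>0$, and then to show that $\mathbf{x}$ equals $\underline{q}(s,t)/\lambda$ up to the sign fixed by $L(s,t)$. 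If the sign does not match, one checks that $-\mathbf{x}$ corresponds to the same $(s,t)$. The count is then exactly matched against $\hat N(Q,B)$, possibly after a constant factor already absorbed in the normalisations of $N_{(\mathbf k,\mathbf l),v}$.

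The remaining cases only contribute to the $O(1)$ error term:
\begin{itemize}
\item The solutions with $x_3=0$ and $t=0$. Here $t=0$ gives $(s,t)=(\pm1,0)$ and $\underline q=(\tilde b,-\tilde a,0)$, so at most a bounded number of points.
\item The points $\pm\mathbf{e}_2$ themselves.
\item The degenerate $(s,t)$ with $L(s,t)=0$. These force $\underline{q_1}=\underline{q_3}=0$, and since $\Delta_Q\neq0$ there are at most two such primitive $(s,t)$.
\end{itemize}
Each of these contributes a bounded number of points, independently of $B$, which yields $N_{\Vert\cdot\Vert'_{\text{new}}}(Q,B)=\hat N(Q,B)+O(1)$.
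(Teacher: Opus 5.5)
Your route is the paper's. You parametrise $Q=0$ by lines through $\xi=(0,1,0)$ as in \cite[Lemma~3.1]{20inshute}, write each solution as a rational multiple of $\underline{q}(s,t)$, fix the scaling through $\lambda$ and the coprimality condition, normalise $t>0$, and put $t=0$, the tangent direction $L(s,t)=0$ and $\pm\mathbf{e}_2$ into the $O(1)$. Your identity $x_2L(s,t)=-\gcd(x_1,x_3)\,g(s,t)$ is cleaner than the paper's sketch.

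The step that fails is the sign bookkeeping. Because $\underline{q}(-s,-t)=\underline{q}(s,t)$ (your own first bullet), the sign of $\mathbf{x}$ cannot be absorbed by passing to the other primitive representative of $(s:t)$. As you note yourself later, $\mathbf{x}$ and $-\mathbf{x}$ go to the same $(s,t)$ with $t>0$. Now $N_{\Vert\cdot\Vert'_{\text{new}}}(Q,B)$ counts integer vectors, so it counts both signs. Away from the degenerate cases your map is therefore exactly two-to-one, and your argument proves $N_{\Vert\cdot\Vert'_{\text{new}}}(Q,B)=2\hat N(Q,B)+O(1)$. The identity as stated holds only if solutions are counted up to sign. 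The paper's proof has the same feature, since it ends with $(x,y,z)=\pm(\dots)$. The discrepancy is absorbed by the factor $\frac12$ in the definition of $N_{(\mathbf{k},\mathbf{l}),v}(B)$, which the paper drops when it writes $N_{(\mathbf{k},\mathbf{l}),v}(B)=N_{\Vert\cdot\Vert_{\text{new}}}(Q_{(\mathbf{k},\mathbf{l}),v},B)$. State the two-to-one correspondence and this normalisation explicitly instead of hedging with ``possibly after a constant factor''.

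There are also three smaller points.
\begin{itemize}
\item For $(s,t)$ coming from a solution, say why $\lambda'=\lambda$. Since $x_3=\pm\underline{q_3}/\lambda'$ is an integer, $\lambda'\mid\underline{q_3}$, and $\lambda\mid\lambda'$ always holds.
\item That $\underline{q}(s,t)$ lies on the conic is the direct substitution $Q(\underline{q})=L^2g-gL^2=0$. It does not follow from $\operatorname{adj}(\Pi)\Pi=\Delta_Q I$.
\item There are exactly two primitive zeros of $L$ simply because $L$ is a nonzero linear form ($\tilde e\neq 0$). What $\Delta_Q=\det\Pi=\tilde a\tilde e^2+\tilde b^2\tilde f\neq 0$ gives you is that $L$ and $g$ have no common nontrivial zero. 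Hence these directions produce only $\pm\mathbf{e}_2$ and never another solution.
\end{itemize}
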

\begin{proof}
    We follow the proof of
    \cite[Lemma~3.1]{20inshute}. In $\mathbb{P}^2$ we define $C$ as the curve given by $Q=0$. We denote the point $(0,1,0)$ of $C$ by $\xi$. We define $L_\xi\coloneqq  \{\tilde{e}x_3 = \tilde{b}x_1\}$. This is the tangent line on $C$ in the point $\xi$. Furthermore, set $\mathscr{L}$ be the set of projective lines in $\mathbb{P}^2$ that go through $\xi$. By $\mathscr{L}(\mathbb{Q})$ we denote the corresponding subset of lines which are defined over $\mathbb{Q}$. By $U$ we denote the open subset formed by the deleting $\xi$ from $C$. Thus $U \subset C$. At least set $\mathscr{U}\coloneqq  \mathscr{L}\setminus\{L_\xi\}$. Then $U(\mathbb{Q})$ and $\mathscr{U}(\mathbb{Q})$ are in bijection.
    We exclude the same cases as Sofos and follow the argument. These cases are $(s,t) \neq \frac{(b,e)}{\gcd(b,e)}$, $t=0$. These give us an error term of size $O(1)$. 
    Furthermore, since there is a bijection between the case $t>0$ and $t<0$ we only consider one case. We focus on $t>0$. 
    Now we consider the bijection between $U(\mathbb{Q})$ and $\mathscr{U}(\mathbb{Q})$. If we consider an element in $\mathscr{L}(\mathbb{Q})$ then it is given by $L_{s,t}\coloneqq  \{sx_1 = tx_3\}$ for $(s,t)\in \mathbb{Z}_{\text{prim}}^2$. The line $L_{s,t}$ intersects $C$ in the point $(x,y,z)$ if and only if $zg(s,t)+ytL(s,t)=0$ or $z=0$. If $z=0$ then $(x,y,z)=\xi$. We ignore this case. So we consider $zg(s,t)+ytL(s,t)=0$. By the equation of $L_{s,t}$ we get $-g(s,t)xt=-g(s,t)sz=syL(s,t)t$.
    If $\gcd(x,y)=1$ then $(x,y,z)\in \mathbb{Z}_{\text{prim}}^3$.
    Thus $(x,y,z)$ with $\gcd(x,y)=1$ represent a point in $C(\mathbb{Q})$ if and only if $(x,y,z)= \pm (sL(s,t)/\lambda,-g(s,t)/\lambda,tL(s,t)/\lambda)$ with $\gcd(sL(s,t)/\lambda,-g(s,t)/\lambda)$=1.  Therefore, we can prove the following.
\end{proof}
We take a sum over the possible values of $\lambda$ that can appear in (\ref{lemma:N_hat}). These are the values $\lambda=\gcd(\underline{q_1}(s,t),\underline{q_2}(s,t),\underline{q_3}(s,t))$ for coprime $s,t$. By
\cite[Lemma~2.2(i)]{20inshute}
we know that every $\lambda$ that can appear in $(\ref{lemma:N_hat})$ must fulfill $\lambda \vert  \frac{\Delta_Q}{\gcd(\tilde{b},\tilde{e})} $.  However, we must be careful, as we still wish to apply the concepts discussed in
\cite[Section~2 and Section~3]{20inshute}. Therefore, we obtain that $\hat{N}(Q,B)$ is equal to 
\begin{align*}
   \sum_{\lambda \vert  \frac{\Delta_Q}{\gcd(\tilde{b},\tilde{e})} } \#\left\{(s,t) \in \mathbb{Z}_{\text{prim}}^2 : \substack{t>0, \Vert \underline{q}(s,t) \Vert_{\text{new}}^{\prime} \leq \lambda B, \\   \gcd\left(\frac{\underline{q_1}(s,t)}{\lambda}, \frac{\underline{q_2}(s,t)}{\lambda} \right)=1, \lambda \vert \underline{q}}\right\}.
\end{align*}
By a Möbius inversion we obtain 
\begin{align*}
    \hat{N}(Q,B) = \sum_{\lambda \vert \Delta_Q}\sum_{r=1}^\infty \mu(r) \#\big\{(s,t) \in \mathbb{Z}_{\text{prim}}^2 : \substack{ t>0, \  \Vert \underline{q} \Vert_{\text{new}}^{\prime} \leq \lambda B ,\\ \lambda r \vert \gcd(\underline{q_1},\underline{q_2}), \  \lambda \vert \underline{q}}\big\}.
\end{align*}
 We define for $T \geq 1$ and $n \in \mathbb{N}$ the function
 \begin{align*}
     M^*_{\sigma, \tau }(T,n) \coloneqq  \# \{(s,t) \in \mathbb{Z}_{\text{prim}}^2 :
     \substack{ (s,t) \equiv (\sigma, \tau) \mod n,  t>0,\\\Vert \underline{q}(s,t) \Vert_{\text{new}}^{\prime} \leq T}  \}.
 \end{align*}
Then we obtain the following analogously to \cite[Lemma~3.2]{20inshute}.
 \begin{align*}
     \hat{N}(Q,B) = \sum_{\lambda | \frac{\Delta_Q}{\gcd(\tilde{b},\tilde{e})} } \sum_{r=1}^\infty \mu(r) \sum_{\substack{(\sigma, \tau) \mod r\lambda \\ \lambda \vert \underline{q}(\sigma, \tau),\\ r\lambda \vert (\underline{q_1}(\sigma, \tau),\underline{q_1}(\sigma, \tau))} }^+ M_{\sigma, \tau}^*(B\lambda , r \lambda ). 
 \end{align*} 
In the following we explain why we do not need to sum over all $r$ that divide $\left(\frac{\underline{q_1}}{\lambda}, \frac{\underline{q_2}}{\lambda} \right)$, it is enough to sum over all divisors of $\frac{\Delta_Q}{\gcd(\tilde{b},\tilde{e})}$. To do so we argue if $r \vert (\underline{q_1},\underline{q_2})$ then $r | (\underline{q_1},\underline{q_2},\underline{q_2})$. 
We only consider those $r$, which are squarefree since otherwise $\mu(r)$ is zero. 
 Therefore, it is sufficient to demonstrate that, for any prime $p$ with $p \vert (\underline{q_1},\underline{q_2})$, the prime $p$ also divides $\underline{q_3}$.
Assume there exist such a prime $p$ with $p|(\underline{q_1},\underline{q_2})$. Since $\underline{q_1}= sL(s,t)$ we get that either $p|s$ or $p|L(s,t)$. Assume $p|L(s,t)$. Then $p|tL(s,t)=\underline{q_3}$. So we know assume the other case, $p|s.$ Since $p|\underline{q_2}=-(at_{11}^2+bt_{21}^2)s^2 +  v^3t^2 $ and $\gcd(s,t)=1$ we get $p|v^3$. 
But both parts of $L(s,t)= (2at_{11}t_{12}+2bt_{21}t_{22})s -  2  v^3   t_{32} t $ are then divisible by $p$. This means $p|L(s,t)$ and therewith we are again in the first case. In total this means $p|\underline{q_3}$. 
Thus $r\vert (\underline{q_1}(\sigma, \tau), \underline{q_2}(\sigma, \tau),\underline{q_3}(\sigma, \tau))$. Therefore $r \vert \gcd(\underline{q_1}(\sigma, \tau), \underline{q_2}(\sigma, \tau),\underline{q_3}(\sigma, \tau))$. by the same argumentation as earlier we know $\gcd(\underline{q_1}(\sigma, \tau), \underline{q_2}(\sigma, \tau),\underline{q_3}(\sigma, \tau)) \vert \frac{\Delta_Q}{\gcd(\tilde{b},\tilde{e})}$.
Thus we ca restrict the $r$-sum to divisors of $\frac{\Delta_Q}{\gcd(\tilde{b},\tilde{e})}$.
Therefore, 
\begin{align}\label{eq:N_hat_2}
    \hat{N}(Q,B) = \sum_{\lambda |  \frac{\Delta_Q}{\gcd(\tilde{b},\tilde{e})} } \sum_{r\vert \frac{\Delta_Q}{\gcd(\tilde{b},\tilde{e})}} \mu(r) \sum_{\substack{(\sigma, \tau) \mod r\lambda \\ \lambda \vert \underline{q}(\sigma, \tau),\\ r\lambda \vert (\underline{q_1}(\sigma, \tau),\underline{q_1}(\sigma, \tau))} }^+ M_{\sigma, \tau}^*(B\lambda , r \lambda ).
\end{align}
Furthermore, we can determine the $\gcd(\tilde{b},\tilde{e})$ explicitly with the following Lemma.
\begin{lemma}\label{lemma:gcd}Let 
     $\tilde{b} = 
2at_{11}t_{12}+2bt_{21}t_{22},\tilde{e}=- 2  v^3  \mathbf{p}^\mathbf{k}  \mathbf{q}^\mathbf{l}  t_{32} $ with 
$(t_{12},t_{22},t_{32}) \in \mathbb{Z}^3 $ such that $ \gcd(t_{12},t_{22})=1,$ and $ a(t_{12})^2+b(t_{22})^2 = v^3\mathbf{p}^\mathbf{k}\mathbf{q}^\mathbf{l}(t_{32})^2. $ Furthermore, let $t_{11}, t_{21} \in \mathbb{Z}$ with $t_{11}t_{22}-t_{21}t_{12} = 1$ and $\vert t_{11} \vert, \vert t_{21} \vert \leq \max(\vert t_{12} \vert, \vert t_{22} \vert).$ Then
$\gcd(\tilde{b},\tilde{e})=2\mathbf{p}^\mathbf{k}\mathbf{q}^\mathbf{l}.$
\end{lemma}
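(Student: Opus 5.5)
The plan is to write $\tilde b = 2A$ with $A \coloneqq a t_{11}t_{12} + b t_{21}t_{22}$, and $\tilde e = -2 v^3 \mathbf{p}^\mathbf{k}\mathbf{q}^\mathbf{l} t_{32}$. Then
\[
\gcd(\tilde b,\tilde e) = 2\gcd\bigl(A,\, v^3\mathbf{p}^\mathbf{k}\mathbf{q}^\mathbf{l} t_{32}\bigr),
\]
so it suffices to show $\gcd(A, v^3\mathbf{p}^\mathbf{k}\mathbf{q}^\mathbf{l} t_{32}) = \mathbf{p}^\mathbf{k}\mathbf{q}^\mathbf{l}$. Throughout I write $N \coloneqq \mathbf{p}^\mathbf{k}\mathbf{q}^\mathbf{l}$, which divides $ab$ and is squarefree. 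I also use that $v$ is coprime to $ab$, since only $v$ with $\mu^2(abv)=1$ contribute.

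The key tool is a Cauchy–Binet/Lagrange-type identity for the binary form $ax^2+by^2$. Using $t_{11}t_{22}-t_{21}t_{12}=1$, one has
\[
A^2 - (a t_{11}^2 + b t_{21}^2)(a t_{12}^2 + b t_{22}^2) = -ab\,(t_{11}t_{22}-t_{21}t_{12})^2 = -ab .
\]
Substituting $a t_{12}^2 + b t_{22}^2 = v^3 N t_{32}^2$ gives
\[
A^2 = \tilde a\, v^3 N t_{32}^2 - ab .
\]
Hence any prime $p$ dividing both $A$ and $v^3 N t_{32}$ divides $ab$. Since $\gcd(v,ab)=1$, such a $p$ must divide $N$ or $t_{32}$.

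Next I show that $t_{32}$ is coprime to $ab$. Suppose a prime $p\mid a$ also divides $t_{32}$. Then $b t_{22}^2 \equiv 0 \bmod p$, and since $p\nmid b$ this forces $p\mid t_{22}$. Reducing the equation modulo $p^2$ then gives $a t_{12}^2\equiv 0 \bmod p^2$. As $a$ is squarefree, $p \mid t_{12}$, contradicting $\gcd(t_{12},t_{22})=1$. The case $p\mid b$ is symmetric. Consequently every common prime divisor of $A$ and $v^3Nt_{32}$ divides $N$. Moreover, it divides $v^3 N t_{32}$ exactly once, because $N$ is squarefree and $p\nmid v t_{32}$. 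So the gcd divides $N$.

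Conversely, I show $N\mid A$. Take $p\mid N$ with, say, $p\mid a$. Reducing the conic equation modulo $p$ gives $b t_{22}^2\equiv 0$, so $p\mid t_{22}$. Then both terms $a t_{11}t_{12}$ and $b t_{21}t_{22}$ of $A$ are divisible by $p$. The case $p\mid b$ is symmetric, using $p\mid t_{12}$. Since $N$ is squarefree, $N\mid A$, and trivially $N \mid v^3Nt_{32}$, so $\gcd(A, v^3Nt_{32}) = N$ and $\gcd(\tilde b,\tilde e)=2\mathbf{p}^\mathbf{k}\mathbf{q}^\mathbf{l}$.

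The only delicate step is the coprimality of $t_{32}$ with $ab$; it is what rules out extra factors of $p$ in the gcd, and it relies on squarefreeness of $a,b$ and primitivity of $(t_{12},t_{22})$. I would also check that $t_{32}\neq 0$ (true, since $ax^2+by^2>0$ for $(t_{12},t_{22})\ne (0,0)$), so that the gcd is well defined.
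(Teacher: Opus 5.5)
Your proof is correct, and it differs from the paper's in the key step. Both arguments get $2\mathbf{p}^\mathbf{k}\mathbf{q}^\mathbf{l}\mid\gcd(\tilde b,\tilde e)$ the same way, from $\mathbf{p}^\mathbf{k}\mid t_{22}$ and $\mathbf{q}^\mathbf{l}\mid t_{12}$.

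To exclude further common factors, the paper multiplies $\tilde b$ by $t_{12}$ and by $t_{22}$ and uses $t_{11}t_{22}-t_{21}t_{12}=1$. This gives the linear relations $t_{12}\tilde b=2(t_{11}v^3\mathbf{p}^\mathbf{k}\mathbf{q}^\mathbf{l}t_{32}^2-bt_{22})$ and $t_{22}\tilde b=2(t_{21}v^3\mathbf{p}^\mathbf{k}\mathbf{q}^\mathbf{l}t_{32}^2+at_{12})$. Hence a prime dividing both $\tilde b$ and $vt_{32}$ would divide both $t_{12}$ and $t_{22}$. That argument only works for primes not dividing $2ab$, so it silently needs $v$ and $t_{32}$ to be coprime to $2ab$. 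The paper asserts coprimality to $ab$ without proof, takes oddness of $v$ from the preceding lemma, and leaves oddness of $t_{32}$ implicit.

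You instead use the single quadratic identity $A^2=\tilde a v^3\mathbf{p}^\mathbf{k}\mathbf{q}^\mathbf{l}t_{32}^2-ab$, which is the $\mathrm{SL}_2(\mathbb{Z})$-invariance of the discriminant of $ax^2+by^2$. It confines every common prime of $A$ and $v^3\mathbf{p}^\mathbf{k}\mathbf{q}^\mathbf{l}t_{32}$ to divisors of $ab$ in one stroke. Because you split off the factor $2$ exactly and $ab$ is odd, the prime $2$ needs no separate treatment. Your short proof that $t_{32}$ is coprime to $ab$, together with the valuation count, supplies exactly the facts the paper takes for granted. So your version is a little longer but self-contained, while the paper's is shorter at the cost of those unstated hypotheses.
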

\begin{proof}
Since $\mathbf{p}^\mathbf{k}\vert t_{22}$ and $\mathbf{q}^\mathbf{l}\vert t_{12}$ we get that $2\mathbf{p}^\mathbf{k}\mathbf{q}^\mathbf{l} \vert \gcd(\tilde{b},\tilde{e})$. It is left to show that there is no $p\vert v^3t_{32}^2$ such that $p\vert \tilde{b}$.
Assume contra wise. We know that $v$ and $t_{32}$ are both coprime to $a$ and $b$.
We observe 
\begin{align*}
    t_{12}(2at_{11}t_{12}+2bt_{21}t_{22})  &= 2(at_{11}t_{12}^2+b(t_{11}t_{22}-1)t_{22})\\
    &= 2(t_{11}v^3\mathbf{p}^\mathbf{k}\mathbf{q}^\mathbf{l}(t_{32})^2-bt_{22})
\end{align*} and in the same way we obtain
\begin{align*}
    t_{22}(2at_{11}t_{12}+2bt_{21}t_{22}) =2(t_{21}v^3\mathbf{p}^\mathbf{k}\mathbf{q}^\mathbf{l}(t_{32})^2+at_{12}). 
\end{align*}
Thus $t_{12}\tilde{b}=2(-t_{11}\tilde{e} -bt_{22})$ and $t_{22}\tilde{b}=2(-t_{21}\tilde{e}+at_{12})$\\
Since $p\vert \tilde{b}$ and $p\vert \tilde{e}$ we get that $p \vert t_{22}$ and $p \vert t_{12}$, this is a contradiction to $\gcd(t_{12},t_{22})=1.$
\end{proof}
In the following we find a bound for $M_{\sigma, \tau}^*(B\lambda , r \lambda )$. Let \begin{align*}
    V\coloneqq  \{(s,t) \in \mathbb{R}^2: t>0, \Vert \underline{q} \Vert_{\text{new}}^\prime \leq 1\}
\end{align*}
and 
$K_0= \sup_{\mathbf{x} \neq 0} \left( 1 + \frac{\Vert \mathbf{x} \Vert_{\infty}}{\Vert \mathbf{x} \Vert_{\text{new}}^{\prime}}\right)$. 
With $\langle Q \rangle$ we denote the maximum modulus of the coefficients of $Q$. 
Then by \cite[Lemma~4.1]{20inshute}, $V$ is contained in a rectangle with 
\begin{align}\label{bound:s_t}
    \vert s \vert, \vert t \vert \ll \langle Q \rangle \left( \frac{K_0}{\Delta_Q} \right)^\frac{1}{2}. 
\end{align}
Furthermore, if $\partial V$ is the boundary of $V$, then 
\begin{align*}
    \vert \partial V\vert \ll \langle Q \rangle \left( \frac{K_0}{\Delta_Q} \right)^\frac{1}{2}. 
\end{align*}
And any line which is parallel to the coordinate axes intersects $V$ in a set of points, if not empty,  of most $O(1)$ intervals. All the implied constants are absolute. 
We define for $T \in \mathbb{R}_{\geq 1}$, $n,\sigma, \tau \in \mathbb{N}$ with $\gcd(\sigma,\tau,n)=1$
\begin{align*}
     M_{\sigma, \tau }(T,n) \coloneqq  \# \{(s,t) \in \mathbb{Z}^2 : (s,t) \equiv (\sigma, \tau) \mod n,  t>0, \Vert \underline{q} \Vert_{\text{new}}^{\prime} \leq T  \}.
\end{align*}
With this definition we obtain the following Lemma.
\begin{lemma}\label{lemma:sofos}
    For any $T, n, \sigma, \tau$ as above with $\gcd(\sigma, \tau,n)=1$ and\\ $n \vert (\underline{q_1}(\sigma, \tau), \underline{q_2}(\sigma, \tau))$, we obtain 
    \begin{align*}
        M_{\sigma,\tau}^*(T,n)= \sum_{\substack{1 \leq m \leq (2TK_0/n)^\frac{1}{2}\\\gcd(m,n)=1}} \mu(m)M_{\overline{m}\sigma, \overline{m}\tau}\left( \frac{T}{m^2},n \right),
    \end{align*}
    where $\overline{m}$ denotes the inverse of $m \mod n$.
\end{lemma}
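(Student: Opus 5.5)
The identity is the standard Möbius inversion removing the coprimality condition $\gcd(s,t)=1$, as in \cite[Lemma~3.3]{20inshute}. We write
\begin{align*}
M^*_{\sigma,\tau}(T,n)=\sum_{\substack{(s,t)\equiv(\sigma,\tau) \bmod n\\ t>0,\ \Vert \underline{q}(s,t)\Vert_{\text{new}}^{\prime}\le T}}\ \sum_{m\mid \gcd(s,t)}\mu(m),
\end{align*}
and interchange the order of summation. For fixed $m$ we write $(s,t)=(ms',mt')$. Since $\underline{q}$ is a vector of binary quadratic forms, we have $\underline{q}(ms',mt')=m^2\underline{q}(s',t')$. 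Hence the norm condition becomes $\Vert\underline{q}(s',t')\Vert_{\text{new}}^{\prime}\le T/m^2$, and $t>0$ is equivalent to $t'>0$.

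Next we treat the congruence $(ms',mt')\equiv(\sigma,\tau)\bmod n$. If $p\mid\gcd(m,n)$, then $p\mid\sigma$ and $p\mid\tau$, contradicting $\gcd(\sigma,\tau,n)=1$. So only $m$ with $\gcd(m,n)=1$ contribute. For such $m$ the congruence is equivalent to $(s',t')\equiv(\overline{m}\sigma,\overline{m}\tau)\bmod n$. This gives
\begin{align*}
M^*_{\sigma,\tau}(T,n)=\sum_{\gcd(m,n)=1}\mu(m)M_{\overline{m}\sigma,\overline{m}\tau}\left(\frac{T}{m^2},n\right).
\end{align*}
We also note that $\gcd(\overline{m}\sigma,\overline{m}\tau,n)=1$, so each summand fits the definition of $M$.

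It remains to truncate the sum at $m\le(2TK_0/n)^{1/2}$. We show that $M_{\overline{m}\sigma,\overline{m}\tau}(T/m^2,n)=0$ when $m$ exceeds this bound. Take any counted pair $(s',t')$, so $(s',t')\neq(0,0)$. By the definition of $K_0$,
\begin{align*}
\Vert\underline{q}(s',t')\Vert_\infty\le K_0\Vert\underline{q}(s',t')\Vert_{\text{new}}^{\prime}\le K_0T/m^2.
\end{align*}
Since $n\mid(\underline{q_1}(\sigma,\tau),\underline{q_2}(\sigma,\tau))$ and $\underline{q}$ is homogeneous of degree two, $n$ also divides $\underline{q_1}$ and $\underline{q_2}$ at $(\overline{m}\sigma,\overline{m}\tau)$, and hence at $(s',t')$.

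The main obstacle is to show that $\underline{q}(s',t')\ne\mathbf{0}$, so that some coordinate is a nonzero multiple of $n$. Then $n\le \Vert\underline{q}\Vert_\infty\le K_0T/m^2$, so $m\le (TK_0/n)^{1/2}$, which lies within the stated range with room for the factor $2$. I would argue nonvanishing as follows. If $\underline{q_1}=\underline{q_3}=0$ with $(s',t')\ne(0,0)$, then $L(s',t')=0$. In that case $\underline{q_2}=-g(s',t')$, which vanishes only if $Q(\xi)$-type degeneracy occurs, and this is excluded by nonsingularity ($\Delta_Q=\det\Pi\neq0$). Equivalently, $\operatorname{adj}(\Pi)\underline{q}=\Delta_Q(s'^2,s't',t'^2)^T\neq 0$.

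A subtlety: if $\underline{q_1}=\underline{q_2}=0$ but $\underline{q_3}\ne0$, then the divisibility by $n$ only concerns the first two coordinates. I would handle this case with the same argument as in the text preceding (\ref{eq:N_hat_2}), which shows that every prime dividing $(\underline{q_1},\underline{q_2})$ divides $\underline{q_3}$. Alternatively, one can use the factor $2$ slack in the bound.
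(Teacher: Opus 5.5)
Your proposal is correct and follows the same route as the paper. Both use Möbius inversion over $m\mid\gcd(s,t)$, with $\gcd(m,n)=1$ forced by $\gcd(\sigma,\tau,n)=1$, and then show that for $m>(2TK_0/n)^{1/2}$ the bound $\Vert\underline{q}\Vert_\infty\le K_0T/m^2$ together with $n\mid(\underline{q_1},\underline{q_2})$ forces a degenerate point, which the paper phrases as forcing $t=0$.

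Your case $\underline{q_1}=\underline{q_2}=0$ is settled most directly by noting that $sL(s,t)=0$ and $g(s,t)=0$ give either $s=0$, hence $\tilde{f}t^2=0$, or $L(s,t)=0$, hence $\underline{q}=\mathbf{0}$, contradicting $\operatorname{adj}(\Pi)\underline{q}=\Delta_Q(s^2,st,t^2)^T$; the factor-$2$ slack plays no role there.
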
\begin{proof}
    By a Möbius inversion of (\ref{eq:N_hat_2}) we obtain 
    \begin{align*}
        M_{\sigma,\tau}^*(T,n)= \sum_{\substack{m=1\\\gcd(m,n)=1}}^\infty \mu(m)M_{\overline{m}\sigma, \overline{m}\tau}\left( \frac{T}{m^2},n \right).
    \end{align*}
    Now we assume $m> \left( \frac{2K_0T}{n}\right)^\frac{1}{2}.$\\
    Since $M_{\overline{m}\sigma, \overline{m}\tau}\left( \frac{T}{m^2},n \right)$ is equal to 
    \begin{align*}
        \#\left\{ (s,t) \in \mathbb{Z}^2: (s,t) \equiv (\overline{m}\sigma,\overline{m}\tau ) \mod n, t>0, \Vert \underline{q}(s,t)\Vert_{\text{new}}^{\prime} \leq \frac{T}{m^2}\right\} 
    \end{align*}
    the condition $\Vert \underline{q}(s,t)\Vert_{\text{new}}^{\prime} \leq \frac{T}{m^2}$ implies $\Vert \underline{q}(s,t)\Vert_{\text{new}}^{\prime} < \frac{n}{2K_0}\leq \frac{n}{2}$.\\
    But $\Vert \underline{q}(s,t)\Vert_{\text{new}}^{\prime} < \frac{n}{2}$ together with $\gcd(\sigma, \tau, n)=1$ and $n \vert (\underline{q_1}(\sigma, \tau), \underline{q_2}(\sigma, \tau)) $ implies $t=0$, which concludes the proof.
\end{proof}

The last preparation for the bound of $M_{\sigma, \tau}^*(B\lambda , r \lambda )$ is a bound for $ M_{\sigma, \tau }(T,n)$. We obtain this bound by \cite[Lemma~4.3]{20inshute}.
\begin{lemma}\label{lemma:M_sigma_tau}
    For any $T\in \mathbb{R}_{>1}, n, \sigma, \tau \in \mathbb{N}$ with $\gcd(\sigma, \tau, n)=1$ we obtain 
    \begin{align*}
         M_{\sigma, \tau }(T,n)= \operatorname{vol}(V)\frac{T}{n^2}+ O\left(1+ \frac{(K_0T)^\frac{1}{2}}{n}\frac{\langle Q \rangle }{\vert \Delta_Q\vert^\frac{1}{2} }\right). 
    \end{align*}
\end{lemma}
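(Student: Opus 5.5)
The plan is a lattice point count in the dilated region $\sqrt{T}\,V$, restricted to one residue class modulo $n$. First we unpack the definition. The form $\underline{q}$ is quadratic in $(s,t)$, so by homogeneity
\[
\Vert \underline{q}(s,t)\Vert_{\text{new}}^{\prime}\le T \iff \Vert \underline{q}(s/\sqrt{T},t/\sqrt{T})\Vert_{\text{new}}^{\prime}\le 1 .
\]
Hence $(s,t)$ is counted exactly when $(s,t)\in \sqrt{T}\,V$, and the condition $t>0$ is already built into $V$. Next we write $s=\sigma+nu$ and $t=\tau+nw$ with $(u,w)\in\mathbb{Z}^2$. Then $M_{\sigma,\tau}(T,n)$ becomes the number of integer points $(u,w)$ in the region
\[
R=\tfrac{1}{n}\big(\sqrt{T}\,V-(\sigma,\tau)\big),
\]
which is a translate of $V$ scaled by $\sqrt{T}/n$. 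Its area is $\operatorname{vol}(V)T/n^2$, which is the claimed main term. The hypothesis $\gcd(\sigma,\tau,n)=1$ plays no role in this count; it is only there so the statement matches how the lemma is used later.

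For the error term we count lattice points column by column. By the facts quoted above from \cite[Lemma~4.1]{20inshute}, every line parallel to a coordinate axis meets $V$ in at most $O(1)$ intervals, and scaling and translating preserve this. Fix $u$. The number of integers $w$ with $(u,w)\in R$ equals the total length of the slice $R\cap\{u\}\times\mathbb{R}$, up to an error of $O(1)$ per interval. We sum this over the integers $u$ in the projection of $R$ to the first coordinate. By \eqref{bound:s_t} that projection has length $\ll \frac{\sqrt{T}}{n}\langle Q\rangle (K_0/|\Delta_Q|)^{1/2}$. Comparing the resulting sum of slice lengths with the integral giving the area of $R$ costs a further $O(1)$ per interval plus the total variation of the slice-length function. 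That variation is bounded by the perimeter of $R$, namely
\[
|\partial R| = \tfrac{\sqrt{T}}{n}\,|\partial V| \ll \tfrac{\sqrt{T}}{n}\,\langle Q\rangle \Big(\tfrac{K_0}{|\Delta_Q|}\Big)^{1/2}.
\]
Collecting the errors gives $O\big(1+\frac{(K_0T)^{1/2}}{n}\frac{\langle Q\rangle}{|\Delta_Q|^{1/2}}\big)$.

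Equivalently, we can quote the standard lattice-point principle (Davenport's lemma in dimension two). For a region whose intersection with every axis-parallel line consists of $O(1)$ intervals, the lattice point count equals the area plus $O(\text{perimeter}+\text{projection lengths}+1)$. Both the perimeter and the projection lengths are bounded as above.

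The main obstacle is bookkeeping rather than anything conceptual. The constants in the error term must be absolute, and we must check that $\sqrt{K_0}$ enters exactly as stated, through the rectangle bound \eqref{bound:s_t} and the boundary bound for $V$. We also use $T>1$ to absorb lower-order terms into the $O(1)$.
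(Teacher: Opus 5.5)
Your proposal is correct and follows essentially the same route as the paper, which proves this lemma simply by invoking \cite[Lemma~4.3]{20inshute}; that lemma is this same lattice-point count for the region $\frac{1}{n}\bigl(\sqrt{T}V-(\sigma,\tau)\bigr)$, using the rectangle, boundary-length and $O(1)$-intervals properties of $V$ from \cite[Lemma~4.1]{20inshute}. Your column-by-column bookkeeping (or the Davenport-lemma shortcut) gives the stated error term, and you are right that the hypothesis $\gcd(\sigma,\tau,n)=1$ plays no role in this count.
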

With these preparations we obtain with Lemma~\ref{lemma:sofos} for (\ref{eq:N_hat_2}) that $\hat{N}(Q,B)$ is equal to 
\begin{align*}
      \sum_{\lambda |  \frac{\Delta_Q}{2\mathbf{p}^\mathbf{k}\mathbf{q}^\mathbf{l}} } \sum_{r\vert \frac{\Delta_Q}{2\mathbf{p}^\mathbf{k}\mathbf{q}^\mathbf{l}}} \mu(r) \sum_{\substack{(\sigma, \tau) \mod r\lambda \\ \lambda \vert \underline{q}(\sigma, \tau),\\ r\lambda \vert (\underline{q_1}(\sigma, \tau),\underline{q_1}(\sigma, \tau))} }^+ \sum_{\substack{1 \leq m \leq (2B K_0/r)^\frac{1}{2}\\\gcd(m,r\lambda)=1}} \mu(m)M_{\overline{m}\sigma, \overline{m}\tau}\left( \frac{B\lambda}{m^2},r\lambda \right).
\end{align*}
As in \cite{20inshute} we set $\mathscr{L}\coloneqq  \frac{(K_0B)^\frac{1}{2}}{r(\lambda)^\frac{1}{2}}\frac{\langle Q \rangle }{\vert \Delta_Q\vert^\frac{1}{2} } $. 
For $M_{\overline{m}\sigma, \overline{m}\tau}\left( \frac{B \lambda}{m^2},r\lambda  \right)$ we obtain together with Lemma~\ref{lemma:M_sigma_tau} the following bound. 
\begin{align*}
    M_{\overline{m}\sigma, \overline{m}\tau}\left( \frac{B\lambda}{m^2},r\lambda \right) = \begin{cases}
        \frac{\text{vol}(V)B}{m^2r^2\lambda}+ O \left( \frac{\mathscr{L}}{m}\right) & \text{ if } m \leq \mathscr{L}\\ 
        O(1) & \text{ otherwise.}
    \end{cases}
\end{align*}
Furthermore, we define the function 
\begin{align*}
    \rho^{**}(r,\lambda)=\#\left\{(s,t)\in [0,r\lambda)^2: \substack{ r\lambda \vert (\underline{q_1}(s,t),\underline{q_2}(s,t)),\\ \lambda \vert \underline{q}(s,t), \  \gcd(r\lambda,s,t)=1}\right\}.
\end{align*}
Then we obtain that $\hat{N}(Q,B)$ is equal to
\begin{align}\label{eq:N_hat_3}
      \sum_{\lambda |  \frac{\Delta_Q}{2\mathbf{p}^\mathbf{k}\mathbf{q}^\mathbf{l}} } \sum_{r\vert \frac{\Delta_Q}{2\mathbf{p}^\mathbf{k}\mathbf{q}^\mathbf{l}}} \mu(r) \rho^{**}(r,\lambda) \sum_{\substack{1 \leq m \leq  \mathscr{L}\\\gcd(m,r\lambda )=1}} \left( \frac{\mu(m)\text{vol}(V)B}{m^2r^2\lambda} +  O\left( \frac{\mu(m)\mathscr{L}}{m}\right)\right)\\ +\sum_{\lambda | \Delta_Q} \sum_{r\vert \lambda} \mu(r) \rho^{**}(r,\lambda) \sum_{\substack{\mathscr{L} < m \leq (2BK_0/r)^\frac{1}{2}\\\gcd(m,r\lambda)=1}} O(1). \notag
\end{align}

To obtain the error terms we need several preparations, we need to compute the different values for $\rho^{**}$ and to compute the volume of $V$.
\subsubsection{Properties of the function $\rho^{**}$}
\begin{lemma}\label{lemma:rho_star_star_multiplicative}
The function $\rho^{**}$ is multiplicative in $(r, \lambda)$, i.e.,
\begin{align*}
   \rho^{**}(r_1 r_2, \lambda_1 \lambda_2) = \rho^{**}(r_1, \lambda_1) \rho^{**}(r_2, \lambda_2) 
\end{align*}
if $\gcd(r_1 \lambda_1, r_2 \lambda_2) = 1$.
\end{lemma}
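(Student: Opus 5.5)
The natural route is the Chinese Remainder Theorem. Set $N_1=r_1\lambda_1$ and $N_2=r_2\lambda_2$ with $\gcd(N_1,N_2)=1$, so $N=r_1r_2\lambda_1\lambda_2=N_1N_2$. The ring isomorphism $\mathbb{Z}/N\mathbb{Z}\cong \mathbb{Z}/N_1\mathbb{Z}\times\mathbb{Z}/N_2\mathbb{Z}$ induces a bijection
\[
[0,N)^2\to[0,N_1)^2\times[0,N_2)^2,\qquad (s,t)\mapsto\bigl((s\bmod N_1,\,t\bmod N_1),(s\bmod N_2,\,t\bmod N_2)\bigr).
\]
I will show that this bijection maps the set counted by $\rho^{**}(r_1r_2,\lambda_1\lambda_2)$ exactly onto the product of the sets counted by $\rho^{**}(r_1,\lambda_1)$ and $\rho^{**}(r_2,\lambda_2)$. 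Taking cardinalities then gives the claim.

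The key point is that each defining condition of $\rho^{**}$ is a divisibility condition, either on the polynomial values $\underline{q_i}(s,t)$ or on a gcd with the modulus. Each such condition therefore depends only on the residue class of $(s,t)$ modulo the relevant modulus, and it splits across coprime factors.

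\textbf{Polynomial conditions.} The $\underline{q_i}$ have integer coefficients, so $\underline{q_i}(s,t)\bmod N_j$ depends only on $(s,t)\bmod N_j$. Since $\gcd(r_1,r_2\lambda_2)=\gcd(\lambda_1,r_2\lambda_2)=1$, and symmetrically for the other factors, we have $\gcd(\lambda_1,\lambda_2)=1$. Hence
\[
\lambda_1\lambda_2\mid \underline{q}(s,t)\iff \lambda_1\mid\underline{q}(s,t)\ \text{and}\ \lambda_2\mid\underline{q}(s,t).
\]
Here $\lambda\mid\underline{q}$ means $\lambda$ divides all three components, and the equivalence is applied componentwise. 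In the same way,
\[
r_1r_2\lambda_1\lambda_2\mid(\underline{q_1},\underline{q_2})\iff N_1\mid(\underline{q_1},\underline{q_2})\ \text{and}\ N_2\mid(\underline{q_1},\underline{q_2}),
\]
because $N_1N_2$ divides an integer exactly when both coprime factors do.

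\textbf{Primitivity condition.} Write $\gcd(N,s,t)=\gcd(N_1,s,t)\gcd(N_2,s,t)$, which holds since $N_1,N_2$ are coprime. So $\gcd(N,s,t)=1$ exactly when $\gcd(N_j,s,t)=1$ for $j=1,2$. Moreover $\gcd(N_j,s,t)$ depends only on $(s,t)\bmod N_j$.

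Combining the two paragraphs, $(s,t)$ satisfies all conditions for $(r_1r_2,\lambda_1\lambda_2)$ if and only if its reduction modulo $N_j$ satisfies all conditions for $(r_j,\lambda_j)$, for each $j$.

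There is no real obstacle here. The only care needed is to record that the pairwise coprimality of $r_1,\lambda_1,r_2,\lambda_2$ across the two indices follows from $\gcd(r_1\lambda_1,r_2\lambda_2)=1$. This justifies splitting the individual conditions on $r$ and $\lambda$ separately, and not only the condition on the product $r\lambda$.
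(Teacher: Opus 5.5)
Your proposal is correct and takes the same approach as the paper: a Chinese Remainder Theorem bijection on residue pairs modulo $r_1\lambda_1r_2\lambda_2$. Under it, each of the three defining conditions (namely $r\lambda\mid(\underline{q_1},\underline{q_2})$, $\lambda\mid\underline{q}$, and $\gcd(r\lambda,s,t)=1$) splits into the corresponding conditions modulo $r_1\lambda_1$ and modulo $r_2\lambda_2$. Your explicit checks, that each condition depends only on the residue modulo the relevant factor and that $\gcd(\lambda_1,\lambda_2)=1$, fill in details the paper only asserts.
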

\begin{proof}
Consider $(r_1, \lambda_1), (r_2, \lambda_2) \in \mathbb{N}^2$ such that $\gcd(r_1 \lambda_1, r_2 \lambda_2) = 1$. 
By the Chinese Remainder Theorem, for every pair $(s, t) \bmod r_1\lambda_1r_2\lambda_2$, there is a unique pair
$((s_1, t_1) \bmod r_1\lambda_1, (s_2, t_2) \bmod r_2\lambda_2)$
such that $s \equiv s_1 \bmod r_1 \lambda_1$, $s \equiv s_2 \bmod r_2 \lambda_2$ and similarly for $t$.
We record the following properties of $\rho^{**}$. 
First, the condition
$r_1 \lambda_1 r_2 \lambda_2 \vert  \bigl(\underline{q_1}(s,t), \underline{q_2}(s,t)\bigr)
$ holds if and only if $
    r_1 \lambda_1 \vert  \bigl(\underline{q_1}(s_1,t_1),  \underline{q_2}(s_1,t_1)\bigr)
   $ and $r_2 \lambda_2 \vert  \bigl(\underline{q_1}(s_2,t_2),  \underline{q_2}(s_2,t_2)\bigr).$\\
Moreover,$\lambda_1 \lambda_2 \vert  \underline{q}(s,t)$ if and only if $ \lambda_1 \vert  \underline{q}(s_1,t_1)$ and $\lambda_2 \vert  \underline{q}.(s_2,t_2).$\\
Finally, $\gcd(r_1 \lambda_1 r_2 \lambda_2, s, t) = 1$
if and only if $\gcd(r_1 \lambda_1, s_1, t_1) = 1$ and $\gcd(r_2 \lambda_2, s_2, t_2) = 1.$
Thus we obtain a bijection between the set counted by $\rho^{**}(r_1 r_2, \lambda_1 \lambda_2)$ and the Cartesian product of the sets counted by $\rho^{**}(r_1, \lambda_1)$ and $\rho^{**}(r_2, \lambda_2)$. Thus 
$\rho^{**}(r_1 r_2, \lambda_1 \lambda_2) = \rho^{**}(r_1, \lambda_1) \rho^{**}(r_2, \lambda_2)$ whenever\\ $\gcd(r_1 \lambda_1, r_2 \lambda_2) = 1$.
\end{proof}
In order to compute the constant explicitly, we consider the different possible primes dividing $\frac{\Delta_Q}{\gcd(\tilde{b},\tilde{e})} = \frac{4abv^3\mathbf{p}^\mathbf{k}\mathbf{q}^\mathbf{l}}{2\mathbf{p}^\mathbf{k}\mathbf{q}^\mathbf{l}  } = 2abv^3$ for the value of $\rho^{**}$.\\
To compute $\rho^{**}$, we introduce some notation. Let 
\begin{align*}
    \rho^*(n)\coloneqq  \#\{(s,t)\in [0,n)^2: n\vert \underline{q}(s,t), \gcd(n,s,t)=1\}.
\end{align*}
This function is considered in \cite{20inshute}. By the definition of $\underline{q}(s,t)$ we obtain 
\begin{align*}
    \rho^*(n)= \#\{(s,t)\in [0,n)^2: n\vert(L(s,t),g(s,t)), \gcd(n,s,t)=1\}.
\end{align*}
To compute the values of $\rho^{**}$ we need the following lemma as a preparation.
\begin{lemma}\label{lemma:preparation}
    Let $p$ be a prime and assume $p\nmid \tilde{a}, p \nmid \tilde{b}, p \nmid \tilde{e}$ and $p \nmid \tilde{f}$. If $p\nmid s$ and $p\nmid t$ then  $p^{i} \vert (L(s,t),g(s,t))$ has no solution in $(s,t)\in [0,p^{i})^2$ for $i \geq 2$. 
\end{lemma}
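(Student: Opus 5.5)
The plan is a direct computation modulo $p^2$. It reduces the simultaneous divisibility $p^i \mid (L(s,t),g(s,t))$ to a divisibility condition on the discriminant $\Delta_Q$, and then uses that $\Delta_Q$ is squarefree away from the primes excluded by the hypotheses. Since $p^i \mid (L,g)$ for $i\geq 2$ implies $p^2 \mid (L,g)$, it suffices to treat $i=2$.

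\textbf{Step 1: eliminate $s$.} Suppose $p^2 \mid L(s,t)=\tilde{b}s+\tilde{e}t$. Since $p\nmid \tilde{b}$, $\tilde b$ is invertible modulo $p^2$, and we get
\[
s\equiv -\tilde{e}\,\overline{\tilde{b}}\,t \pmod{p^2},
\]
where $\overline{\tilde b}$ denotes the inverse of $\tilde b$ modulo $p^2$. Substituting this into $g(s,t)=\tilde{a}s^2+\tilde{f}t^2$ gives
\[
g(s,t)\equiv t^2\,\overline{\tilde{b}}^{\,2}\bigl(\tilde{a}\tilde{e}^2+\tilde{f}\tilde{b}^2\bigr)\pmod{p^2}.
\]

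\textbf{Step 2: identify the bracket as $\Delta_Q$.} Expand $\det(\Pi)$ along the first row, using $\tilde d=0$:
\[
\det(\Pi)=\tilde{b}\bigl(0\cdot\tilde e+\tilde f\tilde b\bigr)-\tilde e\bigl(-\tilde a\tilde e\bigr)=\tilde b^2\tilde f+\tilde a\tilde e^2 .
\]
So the bracket in Step 1 is exactly $\Delta_Q$. Because $p\nmid t$ and $p\nmid\tilde b$, the condition $p^2\mid g(s,t)$ is then equivalent to $p^2\mid \Delta_Q$.

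\textbf{Step 3: show $p^2\nmid\Delta_Q$.} From the explicit coefficients,
\[
\Delta_Q=\tilde b^2\tilde f+\tilde a\tilde e^2=-4abv^3\mathbf{p}^\mathbf{k}\mathbf{q}^\mathbf{l}\,(\text{up to sign}),
\]
which agrees with the value of $\Delta_Q$ used above for $\frac{\Delta_Q}{\gcd(\tilde b,\tilde e)}$. The hypotheses exclude every prime factor of $\Delta_Q$ except those dividing $a$ or $b$ only to the first power:
\begin{itemize}
\item $p\nmid\tilde e=-2v^3\mathbf{p}^\mathbf{k}\mathbf{q}^\mathbf{l}t_{32}$ gives $p\neq 2$ and $p\nmid v$.
\item $p\nmid \tilde f=-v^3\mathbf{p}^\mathbf{k}\mathbf{q}^\mathbf{l}$ gives $p\nmid\mathbf{p}^\mathbf{k}\mathbf{q}^\mathbf{l}$.
\end{itemize}
Hence $\nu_p(\Delta_Q)=\nu_p(ab)\le 1$, because $a$ and $b$ are squarefree and coprime. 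This contradicts Step 2, so there is no solution for $i\geq 2$.

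\textbf{Main obstacle.} The only delicate point is verifying the determinant identity $\det\Pi=\tilde b^2\tilde f+\tilde a\tilde e^2$ and matching it with the explicit expression $4abv^3\mathbf{p}^\mathbf{k}\mathbf{q}^\mathbf{l}$ up to sign. I would confirm the latter from the definitions $\tilde a=at_{11}^2+bt_{21}^2$ and $\tilde b=2at_{11}t_{12}+2bt_{21}t_{22}$, together with $t_{11}t_{22}-t_{21}t_{12}=1$ and the conic relation $at_{12}^2+bt_{22}^2=v^3\mathbf{p}^\mathbf{k}\mathbf{q}^\mathbf{l}t_{32}^2$, or simply from the invariance of the discriminant under the $\det M=1$ change of variables. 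Note that the hypothesis $p\nmid s$ is not really needed: it follows from $p\nmid t$ and $p\mid L$, since $s\equiv -\tilde e\,\overline{\tilde b}\,t$ with $p\nmid\tilde e$.
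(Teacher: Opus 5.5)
Your proposal is correct and follows essentially the same route as the paper: eliminate $s$ using $L$, reduce to $p^i \mid \tilde{a}\tilde{e}^2+\tilde{b}^2\tilde{f}$, and evaluate this quantity as $4abv^3\mathbf{p}^\mathbf{k}\mathbf{q}^\mathbf{l}$ via the conic relation and $t_{11}t_{22}-t_{21}t_{12}=1$ (exactly the verification you sketch), so that $p\nmid 2v\mathbf{p}^\mathbf{k}\mathbf{q}^\mathbf{l}$ forces $p^2\mid ab$, a contradiction. Identifying this quantity with $\det\Pi=\Delta_Q$ is only a repackaging (the sign is in fact $+$, which does not matter), and your remark that $p\nmid s$ follows automatically from $p\nmid t$ is correct.
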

\begin{proof}
    By the assumptions we know that $\tilde{a},\tilde{b},\tilde{e},\tilde{f}$ are invertible modulo $p^{i}$. Assume there exists $(s,t)\in [0,p^i)^2$ with $p\nmid s, p \nmid t$ such that $p^{i}\vert (L(s,t),g(s,t))$. Since $p^{i} \vert L(s,t)$ we obtain 
    \begin{align}\label{eq:L}
        s \equiv -\tilde{e}t\tilde{b}^{-1} \mod p^{i}.
    \end{align}
 Since $p^{i} \vert g(s,t)$ we get $\tilde{a}s^2+\tilde{f}t^2 \equiv 0 \mod p^{i}$. Using (\ref{eq:L}) we obtain 
 $\tilde{a}(-\tilde{e}t\tilde{b}^{-1})^2+\tilde{f}t^2 \equiv 0 \mod p^{i}. $
 Since $p \nmid t$ we get 
$ \tilde{a}\tilde{e}^2+\tilde{b}^2\tilde{f} \equiv 0 \mod p^{i}.$
Using the definitions of $\tilde{a},\tilde{b},\tilde{e}$ and $\tilde{f}$ we get 
  $(at_{11}^2+bt_{21}^2)(2v^3\mathbf{p}^\mathbf{k}\mathbf{q}^\mathbf{l}t_{32})^2 -(2at_{11}t_{12}+2bt_{21}t_{22})^2 (v^3\mathbf{p}^\mathbf{k}\mathbf{q}^\mathbf{l})\equiv 0 \mod p^{i}.$
 We recall the definition of $t_{12},t_{22}$ and $t_{32}$ and get that $at_{12}^2+bt_{22}^2\equiv v^3\mathbf{p}^\mathbf{k}\mathbf{q}^\mathbf{l}t_{32}^2 \mod p^{i}$. Thus we obtain
$(at_{11}^2+bt_{21}^2)4v^3\mathbf{p}^\mathbf{k}\mathbf{q}^\mathbf{l}(at_{12}^2+bt_{22}^2) -(2at_{11}t_{12}+2bt_{21}t_{22})^2 (v^3\mathbf{p}^\mathbf{k}\mathbf{q}^\mathbf{l})\equiv 0 \mod p^{i}.$
 Since $p \nmid 2v^3\mathbf{p}^\mathbf{k}\mathbf{q}^\mathbf{l}$ we obtain $ab(t_{11}t_{22}-t_{12}t_{21})^2 \equiv 0 \mod p^{i}.$
 Since $t_{11}t_{22}-t_{12}t_{21}=1$ by the definition of $t_{11},t_{21}$ we conclude $p^{i} \vert ab$. Since $a$ and $b$ are coprime and squarefree this is a contradiction. 
\end{proof}
In the following lemmas we compute $\rho^{**}$ for the different primes  which divide $2abv^3$.
In the proofs of the lemmas we use the construction (\ref{eq:M}) of $t_{11},t_{12},t_{12},t_{22}$ and $t_{32}$ and the definitions (\ref{q_1}),(\ref{q_2}) and (\ref{q_3}) of $\underline{q}(s,t)$. 
We start with the case that $p \vert \frac{a}{\mathbf{p}^\mathbf{k}}$ and obtain the following lemma for the values of $\rho^{**}$. 
\begin{lemma}\label{lemma:p_mid_a}
    Assume $p \vert \frac{a}{\mathbf{p}^\mathbf{k}}$. Then we get if $\left( \frac{v^3 \mathbf{p}^\mathbf{k}\mathbf{q}^\mathbf{l}b}{p} \right) = 1$ that 
    \begin{align*}
        \rho^{**}(p^j,p^k)= \begin{cases}
            1 & \text{ if } (j,k)=(0,0)\\
            p-1 & \text{ if } (j,k) \in \{(1,0),(0,1)\}\\0 & \text{ if } (j,k)=(1,1).
        \end{cases}
    \end{align*}
    The same holds for $p \vert \frac{b}{\mathbf{q}^\mathbf{l}}$ if  $\left( \frac{v^3 \mathbf{p}^\mathbf{k}\mathbf{q}^\mathbf{l}a}{p} \right) = 1$. 
\end{lemma}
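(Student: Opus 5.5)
The plan is to compute $\rho^{**}(p^j,p^k)$ by reducing all conditions modulo $p$ (or $p^2$) and using the explicit shape of $\underline{q}$ in (\ref{q_1})--(\ref{q_3}). Set $c\coloneqq v^3\mathbf{p}^\mathbf{k}\mathbf{q}^\mathbf{l}$. For $p\mid a/\mathbf{p}^\mathbf{k}$ we have $p$ odd, $p\nmid b$, $p\nmid c$ (using $a,b,v$ pairwise coprime and squarefree), and modulo $p$
\[
\tilde a\equiv bt_{21}^2,\quad \tilde b\equiv 2bt_{21}t_{22},\quad \tilde e=-2ct_{32},\quad \tilde f=-c .
\]
First I will check that $p\nmid t_{22}t_{32}$. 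The defining relation gives $bt_{22}^2\equiv ct_{32}^2 \bmod p$. If $p$ divided either of $t_{22},t_{32}$, it would divide both, hence $p^2\mid at_{12}^2$, so $p\mid t_{12}$, contradicting $\gcd(t_{12},t_{22})=1$. In particular $c\equiv b(t_{22}/t_{32})^2$, which is exactly why $\left(\frac{cb}{p}\right)=1$.

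The case $(j,k)=(0,0)$ is trivial, since the modulus is $1$. For $(1,0)$ and $(0,1)$ the modulus is $p$, and the condition is $p\mid(\underline{q_1},\underline{q_2})$, respectively $p\mid\underline{q}$, for $(s,t)\not\equiv(0,0)$. By the argument given before (\ref{eq:N_hat_2}), $p\mid(\underline{q_1},\underline{q_2})$ already forces $p\mid\underline{q_3}$, so both cases count the same set. Writing $w\coloneqq t_{22}/t_{32}\in(\mathbb{Z}/p)^\times$, modulo $p$ we get
\[
g\equiv b\,(t_{21}^2s^2-w^2t^2),\qquad L\equiv 2bt_{22}\,(t_{21}s-wt).
\]
If $p\mid s$, then $p\mid g$ forces $p\mid t$, which is excluded. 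Hence $p\nmid s$, so $p\mid\underline{q_1}=sL$ means $p\mid L$, and then $g\equiv b(t_{21}s-wt)(t_{21}s+wt)\equiv 0$ holds automatically. The solution set is therefore the line $t_{21}s\equiv wt$ minus the origin. This is a genuine line even if $p\mid t_{21}$, because $w\neq 0$. So we obtain exactly $p-1$ points.

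For $(j,k)=(1,1)$ the modulus is $p^2$ and we need $p^2\mid(\underline{q_1},\underline{q_2})$. By the previous step $p\nmid s$, so $p^2\mid L$ and $p^2\mid g$. The key identity is the elimination of $t$:
\[
\tilde e^2 g(s,t)-\tilde f\,L(s,t)\,(\tilde b s-\tilde e t)=(\tilde a\tilde e^2+\tilde b^2\tilde f)\,s^2 .
\]
As in the proof of Lemma~\ref{lemma:preparation}, using $at_{12}^2+bt_{22}^2=ct_{32}^2$ and $t_{11}t_{22}-t_{12}t_{21}=1$, one gets $\tilde a\tilde e^2+\tilde b^2\tilde f=4abc^2$ up to sign. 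Hence $p^2\mid 4abc^2s^2$, and with $p\nmid 2bcs$ this gives $p^2\mid a$, contradicting squarefreeness. So $\rho^{**}(p,p)=0$.

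I deliberately eliminate $t$ rather than $s$, since Lemma~\ref{lemma:preparation} needs $p\nmid t$ and $p\nmid\tilde a$, both of which can fail here when $p\mid t_{21}$. The main obstacle is verifying the resultant identity $\tilde a\tilde e^2+\tilde b^2\tilde f=\pm4abc^2$ cleanly. The case $p\mid b/\mathbf{q}^\mathbf{l}$ is symmetric, with the roles of $a,b$ and $t_{11},t_{21},t_{12},t_{22}$ interchanged.
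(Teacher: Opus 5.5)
Your argument is correct and follows the paper's route: count the line of solutions modulo $p$, then use a resultant identity to force $p^2\mid a$. Eliminating $t$ rather than $s$ is in fact slightly better, since it also rules out the solutions with $p\mid t$ that arise when $p\mid t_{21}$; the paper's appeal to Lemma~\ref{lemma:preparation} needs $p\nmid t$ and so cannot reach these, and it dismisses them with ``for the remaining cases we also obtain no solutions''.

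There are two harmless slips: the identity should read $\tilde e^2 g+\tilde f\,L\,(\tilde b s-\tilde e t)=(\tilde a\tilde e^2+\tilde b^2\tilde f)s^2$ with a plus sign, and the Lagrange identity gives exactly $\tilde a\tilde e^2+\tilde b^2\tilde f=4abc$ rather than $\pm4abc^2$. Neither affects the conclusion, since $p\nmid 2bcs$.
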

\begin{proof}
  We need $\left( \frac{v^3 \mathbf{p}^\mathbf{k}\mathbf{q}^\mathbf{l}b}{p} \right) = 1$, otherwise there exist no $t_{12},t_{22},t_{32}\in \mathbb{Z}$ such that $at_{12}^2 + bt_{22}^2 = v^3 \mathbf{p}^\mathbf{k}\mathbf{q}^\mathbf{l}t_{32}^2$ with $\gcd(t_{12},t_{22})=1$.
  We consider $\rho^{**}(1,p)=\rho^{*}(p)$. We count $(s,t)\in [0,p)^2 $ such that $p \vert L(s,t),g(s,t)$ with $\gcd(p,s,t)=1$. We assume $p \nmid s, p \nmid t$. Then the relations
  \begin{align}\label{F1GL1}
      bt_{21}^2s^2 \equiv v^3\mathbf{p}^\mathbf{k}\mathbf{q}^\mathbf{l}t^2 \mod p
  \end{align} and 
  \begin{align}\label{F1GL2}
      2bt_{21}t_{22}s \equiv 2v^3\mathbf{p}^\mathbf{k}\mathbf{q}^\mathbf{l}t_{32}t \mod p 
  \end{align} has to be fulfilled.  We get that $p \nmid t_{21}$. By the definition of $t_{22},t_{32}$ we observe that $p \nmid t_{22}$ and $p \nmid t_{32}$. Thus we get from (\ref{F1GL2}) that $s \equiv v^3 \mathbf{p}^\mathbf{k}\mathbf{q}^\mathbf{l}t_{32}t_{21}^{-1}t_{22}^{-1}b^{-1}t \mod p$. Since $p \nmid  v^3 \mathbf{p}^\mathbf{k}\mathbf{q}^\mathbf{l}t_{32}t_{21}^{-1}t_{22}^{-1}b^{-1}$ we get for every $s \in (\mathbb{Z}/p\mathbb{Z})^\times $ a fixed $t$ which is different for every $s$. With (\ref{F1GL1}) we obtain $v^3\mathbf{p}^\mathbf{k}\mathbf{q}^\mathbf{l} b^{-1}\equiv t_{32}^2(t_{22}^2)^{-1} \mod p$ which is fulfilled by the definition of $t_{12},t_{22},t_{32}$.  Thus if $p \nmid t_{21}$ we obtain $p-1$ solutions. 
  If $p\nmid s$ and $p \vert t$ we obtain $p-1$ solutions. 
  If $p \vert s$ and $p \nmid t$ there are no possible solutions.
  In total we obtain $\rho^{**}(1,p)=p-1$. 
  The same holds for $\rho^{**}(p,1)$ because we have already shown, that if a prime $p$ divides $\underline{q_1}(s,t)$ and $\underline{q_2}(s,t)$, then $p$ also divides $\underline{q_3}(s,t)$.  
  Next we consider $\rho^{**}(p,p)$. As before we start with the case $p \nmid s, p \nmid t$. Then our counting problem is equivalent to count $\rho^{*}(p^2)$. With Lemma~\ref{lemma:preparation} we know that not all $\tilde{a},\tilde{b},\tilde{e}$ and $\tilde{f}$ are invertible modulo p. But by definition we know that $p \nmid \tilde{f}$ and $\tilde{f}$. If $p \vert \tilde{a}$ this implies $p \vert t_{21}$. But then we do not get any solutions. Thus $p \vert \tilde{b}$ and therefore $p \vert t_{22}$. But this is a contradiction to the choice of $t_{12},t_{22}$ and $t_{32}$. Thus in total all $\tilde{a},\tilde{b},\tilde{e}$ and $\tilde{f}$ are invertible modulo $p$ and with Lemma~\ref{lemma:preparation} we do not obtain any solutions.
  For the remaining cases we also obtain no solutions.
  Combining everything we obtain the lemma.
  \end{proof}
Next we consider the cases if $p \vert \mathbf{p}^\mathbf{k}$ and $p\vert \mathbf{q}^\mathbf{l}$
\begin{lemma}
    Assume $p\vert \mathbf{p}^\mathbf{k}$. Then we obtain if $\left( \frac{ \frac{a\mathbf{p}^{\mathbf{k}}\mathbf{q}^\mathbf{l}v^3}{p^2}}{p} \right) = 1$  
    \begin{align*}
        \rho^{**}(p^j,p^k)= \begin{cases}
            1 & \text{ if } (j,k)=(0,0)\\
            p-1 & \text{ if } (j,k) \in \{(1,0),(0,1)\}\\0 & \text{ if } (j,k)=(1,1).
        \end{cases}
    \end{align*}
    The same holds if $p \vert \mathbf{q}^\mathbf{l}$ and $\left( \frac{ \frac{b\mathbf{p}^\mathbf{k}\mathbf{q}^{\mathbf{l}}v^3}{p^2}}{p} \right) = 1$
\end{lemma}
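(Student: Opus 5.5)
The plan is to follow the template of Lemma~\ref{lemma:p_mid_a} closely: compute $\rho^{**}(1,p)=\rho^*(p)$, $\rho^{**}(p,1)$ and $\rho^{**}(p,p)$ by direct counting of residue pairs $(s,t)\in[0,p)^2$ (resp.\ $[0,p^2)^2$), using the explicit forms (\ref{q_1}), (\ref{q_2}), (\ref{q_3}) together with the construction (\ref{eq:M}). The key difference from Lemma~\ref{lemma:p_mid_a} is that now $p\mid \mathbf{p}^\mathbf{k}$, so $p\mid \tilde e$ and $p\mid\tilde f$, and the conic condition $at_{12}^2+bt_{22}^2=v^3\mathbf{p}^\mathbf{k}\mathbf{q}^\mathbf{l}t_{32}^2$ forces $p\mid t_{22}$ (as used in Lemma~\ref{lemma:gcd}), hence $p\nmid t_{12}$ and, from $t_{11}t_{22}-t_{21}t_{12}=1$, $p\nmid t_{21}$. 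I would first record these divisibilities, and also that $p\nmid \tilde a=at_{11}^2+bt_{21}^2\equiv bt_{21}^2 \pmod p$, while $\tilde b=2at_{11}t_{12}+2bt_{21}t_{22}\equiv 0 \pmod p$. Writing $t_{22}=pt_{22}'$, dividing the conic by $p$ and reducing mod $p$ gives $a t_{12}^2/p \cdot$ \ldots\ more precisely $bp\,t_{22}'^2\equiv (v^3\mathbf{p}^\mathbf{k}\mathbf{q}^\mathbf{l}/p)t_{32}^2 - (a/p)t_{12}^2 \pmod p$, which yields $(a/p)t_{12}^2\equiv-(\ldots)$; combining with the hypothesis on the Legendre symbol $\bigl(\frac{a\mathbf{p}^\mathbf{k}\mathbf{q}^\mathbf{l}v^3/p^2}{p}\bigr)=1$ this is exactly the local solvability condition, and I would note (as in the previous lemma) that if the symbol were $-1$ no admissible $(t_{12},t_{22},t_{32})$ exists.

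Counting $\rho^*(p)$: since $\tilde b\equiv\tilde e\equiv 0$, $L(s,t)\equiv 0 \pmod p$ automatically, and $g(s,t)\equiv \tilde a s^2 \pmod p$ as $p\mid \tilde f$. So $p\mid g$ iff $p\mid s$, and with $\gcd(p,s,t)=1$ we get $s\equiv 0$, $t\in(\mathbb{Z}/p)^\times$: exactly $p-1$ pairs. For $\rho^{**}(p,1)$ the condition is $p\mid(\underline{q_1},\underline{q_2})$, which by the argument already given in the text (a prime dividing $\underline{q_1},\underline{q_2}$ divides $\underline{q_3}$) is equivalent to $p\mid\underline{q}$, so again $p-1$.

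For $\rho^{**}(p,p)$ we need $p^2\mid(\underline{q_1},\underline{q_2})$ and $p\mid \underline q$ with $(s,t)$ mod $p^2$, $\gcd(p,s,t)=1$. From the above, $p\mid s$ and $p\nmid t$. Then $\underline{q_2}=-\tilde a s^2-\tilde f t^2$ with $p^2\mid \tilde a s^2$, so we need $p^2\mid \tilde f=-v^3\mathbf{p}^\mathbf{k}\mathbf{q}^\mathbf{l}$, impossible since $p\,\|\,\mathbf{p}^\mathbf{k}$, $p\nmid v$ and $p\nmid \mathbf{q}^\mathbf{l}$. Hence $\rho^{**}(p,p)=0$; $(j,k)=(0,0)$ is trivial. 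The case $p\mid\mathbf{q}^\mathbf{l}$ is symmetric with roles of $a,b$ and $t_{12},t_{22}$ swapped ($p\mid t_{12}$, $p\nmid t_{11}$, $\tilde a\equiv at_{11}^2$).

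The main obstacle I expect is the bookkeeping of which of $t_{11},t_{21},t_{12},t_{22},t_{32}$ are divisible by $p$ (in particular verifying $p\nmid t_{32}$ is not needed, while $p\nmid\tilde a$ is), and making the Legendre-symbol hypothesis appear correctly as the solvability condition; the actual counts are then short.
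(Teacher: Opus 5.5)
Your proof is correct, but it follows a different route from the paper's.

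\textbf{The paper's route.} It writes $a=p\hat a$, $\mathbf{p}^\mathbf{k}=p\,\mathbf{p}^{\hat{\mathbf{k}}}$ and $t_{22}=p\hat t_{22}$, and rewrites the base conic as $\hat a t_{12}^2+bp\,\hat t_{22}^2=v^3\mathbf{p}^{\hat{\mathbf{k}}}\mathbf{q}^\mathbf{l}t_{32}^2$. It then declares this ``the same situation'' as Lemma~\ref{lemma:p_mid_a}, with $p$ now dividing the coefficient of the second variable, and reads off both the Legendre condition and the values of $\rho^{**}$ from that lemma.

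\textbf{Your route.} You work directly with the coefficients of $Q$ modulo $p$. From $p\mid t_{22}$ and $p\nmid t_{12}t_{21}$ you get $\tilde b\equiv\tilde e\equiv\tilde f\equiv 0$ and $p\nmid\tilde a$. Hence $\underline{q_1}\equiv\underline{q_3}\equiv 0$ and $\underline{q_2}\equiv-\tilde a s^2 \pmod p$, which gives the count $p-1$ at once. The case $(j,k)=(1,1)$ is excluded because $p\mid s$, $p\nmid t$ leaves $\underline{q_2}\equiv-\tilde f t^2 \pmod{p^2}$ with $\nu_p(\tilde f)=1$.

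\textbf{What each buys.} Your version gains rigour and transparency.
\begin{itemize}
\item The paper's substitution $t_{22}\mapsto\hat t_{22}$ does not carry the $\operatorname{SL}_2$-completion $M$, and hence the forms $\underline{q_i}$, along with it.
\item The local picture genuinely differs from Lemma~\ref{lemma:p_mid_a}. There, $\tilde e$ and $\tilde f$ are $p$-adic units. Here, $p$ divides $\tilde b,\tilde e,\tilde f$ and $L$ vanishes identically modulo $p$. So the counts coincide, but not because the situations are literally the same.
\item Your computation also shows that the Legendre-symbol hypothesis plays no role in the count. It is automatically implied by the existence of the base point $(t_{12},t_{22},t_{32})$.
\end{itemize}
The paper's argument is shorter but essentially heuristic.

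\textbf{A small slip in your side remark.} Dividing the conic by $p$ and reducing gives
$\tfrac{a}{p}\,t_{12}^2\equiv \tfrac{v^3\mathbf{p}^\mathbf{k}\mathbf{q}^\mathbf{l}}{p}\,t_{32}^2 \pmod p$, with no minus sign. This forces $p\nmid t_{32}$, and multiplying by $\tfrac{a}{p}$ yields $\left(\frac{a\mathbf{p}^\mathbf{k}\mathbf{q}^\mathbf{l}v^3/p^2}{p}\right)=1$.
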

\begin{proof}
    Assume $p\vert \mathbf{p}^\mathbf{k}$.\\
    We write $a$ as $a = p \hat{a}$ with $\hat{a} \in \mathbb{Z},p \nmid \hat{a}, \mathbf{p}^\mathbf{k}$ as  $\mathbf{p}^\mathbf{k} = \mathbf{p}^{\hat{\mathbf{k}}} p $ with $\mathbf{p}^{\hat{\mathbf{k}}} \in \mathbb{Z}, p \nmid \mathbf{p}^{\hat{\mathbf{k}}}$ and $t_{22}$ as $t_{22} = p \hat{t}_{22}$ with $\hat{t}_{22} \in \mathbb{Z},p \nmid \hat{t}_{22}$.
Then we get 
$ p \hat{a}(t_{12})^2+b(p \hat{t}_{22})^2 = v^3\mathbf{p}^{\hat{\mathbf{k}}} p\mathbf{q}^\mathbf{l}(t_{32})^2$
This is equivalent to
$\hat{a}(t_{12})^2+bp( \hat{t}_{22})^2 = v^3\mathbf{p}^{\hat{\mathbf{k}}} \mathbf{q}^\mathbf{l}(t_{32})^2$.
Since $\hat{a},bp $ and $ v^3\mathbf{p}^{\hat{\mathbf{k}}} \mathbf{q}^\mathbf{l}$ are pairwise coprim and squarefree we are now in the same situation as in Lemma~\ref{lemma:p_mid_a}. This means, if $(t_{12},t_{22},t_{32})$ exists then we need $$\left( \frac{v^3 \mathbf{p}^{\hat{\mathbf{k}}}\mathbf{q}^\mathbf{l}\hat{a}}{p} \right) = \left( \frac{ \frac{a\mathbf{p}^{\mathbf{k}}\mathbf{q}^\mathbf{l}v^3}{p^2}}{p} \right) = 1 $$
and we conclude in the same way as in the proof of Lemma~\ref{lemma:p_mid_a}. 
\end{proof}
With the same ideas we get the next two lemmas.
\begin{lemma}\label{lemma:p_mid_v}
    Assume $p \vert v$ and $p\neq 2$. Then we get if $\left( \frac{-ab}{p} \right) = 1$: 
    \begin{align*}
        \rho^{**}(p^j,p^k)= \begin{cases}
            1 & \text{ if } (j,k)=(0,0)\\
            p-1 & \text{ if } (j,k) \in \{(1,0),(0,1)\}\\p(p-1)& \text{ if } (j,k) =(0,2)\\
            p^2(p-1) & \text{ if } (j,k)\in \{(1,1), (0,3)\}\\
            p^3(p-1) & \text{ if }(j,k)=(1,2)\\
            0 & \text{if } (j,k)=(1,3)
        \end{cases}
    \end{align*}
\end{lemma}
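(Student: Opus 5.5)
Since $\rho^{**}$ is defined by congruences modulo $p^{j+k}$, I plan a direct count of residue classes $(s,t)$ modulo $p^{j+k}$ with $\gcd(p,s,t)=1$, split by the $p$-adic valuations of $s$ and $t$. The case $(0,0)$ is trivial. The key inputs are the shape of the forms at $p\mid v$, $p\neq 2$. Here $p\mid \tilde f=-v^3\mathbf{p}^\mathbf{k}\mathbf{q}^\mathbf{l}$ and $p\mid\tilde e$, and in fact $p^3$ divides both, since $v$ is squarefree and coprime to $ab$. On the other hand $p\nmid\tilde a$ and $p\nmid\tilde b$. To see this, reduce the defining relation modulo $p$: $at_{12}^2+bt_{22}^2\equiv 0 \bmod p$ with $\gcd(t_{12},t_{22})=1$ forces $p\nmid t_{12}t_{22}$ and requires $\left(\frac{-ab}{p}\right)=1$, which explains the hypothesis. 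The identities $t_{12}\tilde b=2(-t_{11}\tilde e-bt_{22})$ and $t_{22}\tilde b=2(-t_{21}\tilde e+at_{12})$ from the proof of Lemma~\ref{lemma:gcd} then give $p\nmid\tilde b$. For $\tilde a$, note that $\tilde a t_{12}^2\equiv a(t_{11}t_{12})^2+b(t_{21}t_{12})^2$, and combine this with $t_{11}t_{22}-t_{21}t_{12}=1$ to get $\tilde a\not\equiv0$.

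With these facts, $L(s,t)\equiv\tilde b s \pmod{p^3}$ and $g(s,t)\equiv\tilde a s^2\pmod{p^3}$. So every condition $p^i\mid \underline{q}$ or $p^i\mid(\underline{q_1},\underline{q_2})$ with $i\le 3$ reduces to a condition on $\nu_p(s)$ once $p\nmid t$. Indeed, if $p\mid s$, then $\gcd(p,s,t)=1$ forces $p\nmid t$; and if $p\nmid s$, then $p\nmid L$, so no divisibility can hold. The vector $\underline q$ is $(sL,-g,tL)$. For $p\mid s$ we have $\nu_p(g)=\min(2\nu_p(s),3+\nu_p(t))$ roughly and $\nu_p(L)=\min(\nu_p(s),3)$. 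Hence $p^i\mid \underline q$ holds exactly when $\nu_p(s)\ge\lceil i/2\rceil$, with the additional constraint from $\tilde f t^2$ having valuation exactly $3$ (when $v\,\|\,p$) once $i\ge 4$. Similarly, $p^i\mid(\underline{q_1},\underline{q_2})$ means $2\nu_p(s)\ge i$ and the same for $g$, the other conditions being automatic.

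I then count. For modulus $N=p^{j+k}$ the conditions are $\nu_p(s)\ge e$ for some threshold $e$ and $p\nmid t$. The number of admissible $s$ is $p^{j+k-e}$ and of $t$ is $p^{j+k-1}(p-1)$, and I divide by the number of $t$-lifts appropriately. I would check each case against the table: $(0,1)$ and $(1,0)$ give $e=1$ and $N=p$, hence $p-1$; $(0,2)$ gives $e=1$ and $N=p^2$, hence $p\cdot p(p-1)/p$. For the combined conditions "$\lambda\mid\underline q$ and $r\lambda\mid(\underline{q_1},\underline{q_2})$" I take the stronger threshold. The main obstacle is the boundary case $(1,3)$, where $p^4\mid(\underline{q_1},\underline{q_2})$ is needed. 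Since $\nu_p(\tilde f t^2)=3$ exactly while $\nu_p(\tilde a s^2)\ge 4$ once $\nu_p(s)\ge2$, we get $\nu_p(g)=3$, so no solution exists, which gives $0$. I must verify the same cancellation does not kill $(1,2)$ and $(0,3)$: there only $p^3$ is required, and $\nu_p(g)\ge3$ holds whenever $\nu_p(s)\ge2$. This yields $p^2(p-1)$ for $N=p^3$ and $p^3(p-1)$ for $N=p^4$. I expect matching these exact counts, and keeping track of whether the condition is on $\underline q$ or on $(\underline{q_1},\underline{q_2})$, to be the fiddly part.
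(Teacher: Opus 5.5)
Your reduction is sound: using $p\nmid\tilde b$ and $p^3\mid\tilde e,\tilde f$, you correctly show that only pairs with $p\mid s$, $p\nmid t$ can contribute. A direct residue count is also what the paper's ``same ideas'' amounts to. But your central characterization is wrong. It is not true that $p^i\mid\underline q$ holds exactly when $\nu_p(s)\ge\lceil i/2\rceil$, because you have dropped the third component $\underline{q_3}=tL(s,t)$.

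Since $\gcd(p,s,t)=1$, the condition $p^k\mid\underline q$ is equivalent to $p^k\mid L(s,t)$ and $p^k\mid g(s,t)$; this is exactly the paper's reformulation of $\rho^*$. As $L\equiv\tilde b s\pmod{p^3}$ with $p\nmid\tilde b$, this forces $\nu_p(s)\ge k$ for $k\le 3$. With your threshold, an honest count gives $\rho^{**}(1,p^2)=p\cdot p(p-1)=p^2(p-1)$ and $\rho^{**}(1,p^3)=p\cdot p^2(p-1)=p^3(p-1)$, contradicting the table.

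Your step ``divide by the number of $t$-lifts appropriately'' is what makes these two entries come out, but it has no justification. $\rho^{**}(p^j,p^k)$ counts each pair $(s,t)$ modulo $p^{j+k}$ exactly once, and you (correctly) do not divide for $(1,1)$ and $(1,2)$. Note also that $(1,2)$ is a count modulo $r\lambda=p^3$, not $p^4$. So your final assignment of $p^2(p-1)$ and $p^3(p-1)$ to the moduli $p^3$ and $p^4$ does not correspond to the cases $(0,3)$ and $(1,2)$.

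The repair is short. For $j\le 1$ and $1\le j+k\le 3$, the admissible pairs are exactly those with $p\nmid t$ and $p^e\mid s$, where $e=\max(k,\lceil (j+k)/2\rceil)$:
\begin{itemize}
\item the bound $e\ge k$ comes from $p^k\mid L$;
\item the bound $e\ge\lceil (j+k)/2\rceil$ comes from $p^{j+k}\mid sL$, using $\nu_p(sL)=2\nu_p(s)$ when $\nu_p(s)\le 2$;
\item $p^{j+k}\mid g$ is then automatic, since $\nu_p(\tilde a s^2)\ge 2e\ge j+k$ and $\nu_p(\tilde f t^2)=3$.
\end{itemize}
Hence $\rho^{**}(p^j,p^k)=p^{j+k-e}\cdot p^{j+k-1}(p-1)$, with no division. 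This reproduces every entry of the table, with $e=1,1,2,1,3,2$ for $(1,0),(0,1),(0,2),(1,1),(0,3),(1,2)$. Your argument for $(1,3)$ is correct as it stands.

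Finally, drop the claim $p\nmid\tilde a$. Your computation actually yields $\tilde a t_{12}^2\equiv b(1-2t_{11}t_{22})\pmod p$, which can vanish. Indeed, replacing $(t_{11},t_{21})$ by $(t_{11}+nt_{12},t_{21}+nt_{22})$ changes $\tilde a$ by $n\tilde b$ modulo $p$. Nothing depends on this claim, though, since only $\nu_p(\tilde a s^2)\ge 2\nu_p(s)$ is ever used.
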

The last possible value we have to consider is $p=2$.
\begin{lemma}
    If $v^3\mathbf{p}^\mathbf{k}\mathbf{q}^\mathbf{l}\equiv1 \mod 4$ we obtain
    \begin{align*}
        \rho^{**}(2^j,2^k)= \begin{cases}
        1 & \text{ if } (j,k) \in \{(0,0),(1,0),(0,1)\}\\0 & \text{ if } (j,k)=(1,1).
        \end{cases}
    \end{align*}
\end{lemma}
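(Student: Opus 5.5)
We compute $\rho^{**}(2^j,2^k)$ for $(j,k)\in\{(0,0),(1,0),(0,1),(1,1)\}$ by listing residues directly. We use the explicit forms (\ref{q_1}), (\ref{q_2}), (\ref{q_3}) of $\underline{q}$, reduced modulo $2$ and modulo $4$. The case $(0,0)$ is trivial: the only pair in $[0,1)^2$ is $(0,0)$, and the conditions are vacuous, giving the value $1$.

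\textbf{Modulo $2$.} Both $\underline{q_1}=sL(s,t)$ and $\underline{q_3}=tL(s,t)$ are even identically, since $L(s,t)=\tilde{b}s+\tilde{e}t$ and $\tilde{b},\tilde{e}$ are both even. Indeed $\tilde{b}=2at_{11}t_{12}+2bt_{21}t_{22}$ and $\tilde{e}=-2v^3\mathbf{p}^\mathbf{k}\mathbf{q}^\mathbf{l}t_{32}$. So the only condition is $2\mid \underline{q_2}(s,t)=-\tilde{a}s^2+v^3\mathbf{p}^\mathbf{k}\mathbf{q}^\mathbf{l}t^2$.

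The hypothesis and $a\equiv b\equiv 1\bmod 4$ force $v$ odd by the earlier lemma. Hence $v^3\mathbf{p}^\mathbf{k}\mathbf{q}^\mathbf{l}$ is odd. Also $\tilde{a}=at_{11}^2+bt_{21}^2\equiv t_{11}+t_{21}\pmod 2$. I claim $\tilde{a}$ is odd. We have $t_{11}t_{22}-t_{21}t_{12}=1$. The relation $at_{12}^2+bt_{22}^2\equiv t_{32}^2 \pmod 4$ with $\gcd(t_{12},t_{22})=1$ forces exactly one of $t_{12},t_{22}$ to be even. Otherwise the left side would be $\equiv 2\pmod 4$, which is not a square.

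Say $t_{12}$ is even and $t_{22}$ is odd. Then $t_{11}$ is odd, and the parity of $t_{21}$ is unconstrained. So I cannot simply conclude; instead I compute directly. The condition becomes $\tilde{a}s^2\equiv t^2\pmod 2$ with $(s,t)\neq(0,0)\bmod 2$.
\begin{itemize}
\item If $\tilde{a}$ is odd, the only solution is $(1,1)$.
\item If $\tilde{a}$ is even, the only solution is $(1,0)$.
\end{itemize}
In either case exactly one pair survives. So $\rho^{**}(1,2)=\rho^*(2)=1$, and likewise $\rho^{**}(2,1)=1$. The latter holds because the condition $2\mid(\underline{q_1},\underline{q_2})$ coincides with $2\mid\underline{q}$, by the argument given earlier that primes dividing $\underline{q_1},\underline{q_2}$ divide $\underline{q_3}$.

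\textbf{Modulo $4$.} For $(1,1)$ we must count $(s,t)\bmod 4$ with $\gcd(2,s,t)=1$ satisfying $4\mid(\underline{q_1},\underline{q_2})$ and $2\mid\underline{q}$. The key input is that $\tilde{b}/2$ and $\tilde{e}/2$ are not both even: by Lemma~\ref{lemma:gcd}, $\gcd(\tilde{b},\tilde{e})=2\mathbf{p}^\mathbf{k}\mathbf{q}^\mathbf{l}$, which is exactly divisible by $2$.

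\emph{Case $s$ odd.} Here $4\mid sL(s,t)$ forces $2\mid L(s,t)/2$.

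\emph{Case $s$ even.} Then $t$ is odd, and $4\mid \underline{q_2}$ reads $v^3\mathbf{p}^\mathbf{k}\mathbf{q}^\mathbf{l}t^2\equiv \tilde{a}s^2\pmod 4$, i.e.\ $1\equiv 0\pmod 4$, which is impossible.

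So $s$ is odd, and the mod-$2$ analysis above fixes the residue class of $t$ modulo $2$. It remains to use $\underline{q_2}\equiv 0\pmod 4$ together with $L/2\equiv 0\pmod 2$ and show they are incompatible. Mirroring Lemma~\ref{lemma:preparation}, I would eliminate $s$. This leads to $\tilde{a}\tilde{e}^2+\tilde{b}^2\tilde{f}$, which by the identity in that proof equals $-4abv^3\mathbf{p}^\mathbf{k}\mathbf{q}^\mathbf{l}\cdot(t_{11}t_{22}-t_{12}t_{21})^2$ up to sign. After dividing by $4$, this is odd.

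I expect this $2$-adic elimination to be the main obstacle. Lemma~\ref{lemma:preparation} assumes $2\nmid\tilde{b},\tilde{e}$, so the argument must be redone with $\tilde{b}=2\beta$, $\tilde{e}=2\eta$, with $\beta,\eta$ not both even.

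The subcases are:
\begin{itemize}
\item \emph{$\beta$ odd.} Then $\beta s+\eta t\equiv 0 \pmod 2$ determines $s$ modulo $2$ from $t$. Substituting gives $\tilde{a}s^2+\tilde{f}t^2\equiv 0\pmod 4$, with $s,t$ odd or $t$ even. Multiplying by $\beta^2$ and using $\beta^2\tilde{a}\cdot s^2\equiv \tilde{a}\eta^2t^2$ modulo $4$ (valid since $\beta s\equiv -\eta t\pmod 2$ implies the squares agree mod $4$) yields $t^2(\tilde{a}\eta^2+\beta^2\tilde{f})\equiv 0\pmod 4$.
\item \emph{$\beta$ even, $\eta$ odd.} Here $t$ must be even, and $\underline{q_2}\equiv -\tilde{a}s^2\equiv 0 \pmod 4$ requires $\tilde{a}$ even. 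But with $t_{11}$ odd from the parity discussion and $\beta$ even, one checks this contradicts $t_{11}t_{22}-t_{21}t_{12}=1$.
\end{itemize}
Since $\tilde{a}\eta^2+\beta^2\tilde{f}=-abv^3\mathbf{p}^\mathbf{k}\mathbf{q}^\mathbf{l}$ is odd, $t$ must be even in the first subcase, and then a direct check modulo $4$ (with $s$ odd) gives $\tilde{a}\equiv 0\pmod 4$, contradicting the mod-$4$ constraint. Hence there are no solutions, and $\rho^{**}(2,2)=0$.
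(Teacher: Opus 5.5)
Your argument is correct and follows the route the paper intends. The paper gives no separate proof, only ``with the same ideas'': direct residue counting together with the elimination identity of Lemma~\ref{lemma:preparation}, which you adapt to $\tilde b=2\beta$, $\tilde e=2\eta$. Two small clean-ups. First, the identity is $\tilde a\eta^2+\beta^2\tilde f=+abv^3\mathbf{p}^\mathbf{k}\mathbf{q}^\mathbf{l}$, though only its oddness matters. Second, in both subcases, once $t$ is even, $4\mid\underline{q_2}$ forces $\tilde a\equiv 0\pmod 4$. That is impossible because $\tilde a=at_{11}^2+bt_{21}^2$ with $\gcd(t_{11},t_{21})=1$ and $a\equiv b\equiv 1\pmod 4$, so $\tilde a\equiv 1$ or $2\pmod 4$. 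This replaces your vague ``mod-4 constraint'' and ``one checks'' steps. The latter also leaned on ``$t_{11}$ odd'', which only holds in the case $t_{12}$ even.
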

\subsubsection{Computation of the volume of $V$}
In this subsection we compute the volume of $V$, where\\ $V= \{(s,t)\in \mathbb{R}^2: t>0, \Vert \underline{q}(s,t)\Vert_{\text{new}}^\prime \leq 1\}$.
Since $\Vert \underline{q}(s,t) \Vert_{\text{new}}^\prime = \Vert M \underline{q}(s,t) \Vert_{\text{new}} $ consider the following definition:
\begin{align*}
    M\underline{q}(s,t) = \begin{pmatrix}
        t_{11}q_1(s,t)+t_{12}q_{2}(s,t)\\
        t_{21}q_1(s,t)+t_{22}q_2(s,t)\\
        t_{32}q_2(s,t)+q_3(s,t)
    \end{pmatrix} =: \begin{pmatrix}
        \theta_1(s,t)\\
        \theta_2(s,t)\\
        \theta_3(s,t)
    \end{pmatrix}.
\end{align*}
We know that $Q(\underline{q}(s,t))=0 $ for all $(s,t)\in \mathbb{R}^2$.  
Since $\Vert \underline{q}(s,t) \Vert_{\text{new}}^\prime = \Vert M \underline{q}(s,t) \Vert_{\text{new}} $  and $Q(\mathbf{x}) = Q_v(M\mathbf{x})$ we get $
    0=Q(\underline{q}(s,t))=Q_v(M \underline{q}(s,t)).$
Thus $M \underline{q}(s,t)$ is a point on the conic $Q_v$ and therefore $
    \Vert \underline{q}(s,t)\Vert_{\text{new}}^\prime = \Vert M\underline{q}(s,t)\Vert_{\text{new}} = \max(|\theta_1(s,t)|,|\theta_2(s,t)|)$. 
 With the definition of $\theta_1(s,t)$ and $\theta_2(s,t)$ we obtain $$V= \{(s,t)\in \mathbb{R}^2: t>0, \vert \theta_1(s,t)\vert, \vert \theta_2(s,t)\vert \leq 1\}.$$
To compute the volume of $V$ we observe $\theta_1(s,t)
  =t_{11}(\tilde{b}s^2+\tilde{e}st)+t_{12}(-\tilde{a}s^2-\tilde{f}t^2)$ and $\theta_2(s,t) 
 =t_{21}(\tilde{b}s^2+\tilde{e}st)+t_{22}(-\tilde{a}s^2-\tilde{f}t^2).$
 Furthermore we define the map 
 \begin{align*}
     F: \mathbb{R}^2 \rightarrow \mathbb{R}^2, (s,t)\mapsto (\theta_1(s,t),\theta_2(s,t)).
 \end{align*}
This map is smooth. Thus we obtain 
\begin{align*}
  \frac{\partial(\theta_1(s,t),\theta_2(s,t))}{\partial(s,t)} 
  &=\begin{pmatrix}
t_{11} & t_{12} \\ t_{21} & t_{22}
  \end{pmatrix}
  \begin{pmatrix}
      2s\tilde{b}+\tilde{e}t & \tilde{e}s\\
     -2\tilde{a}s &  -2\tilde{f}t.\\
  \end{pmatrix}
\end{align*}
Furthermore we obtain for the determinant: 
\begin{align}\label{eq:det}
     \left\vert \operatorname{det}\left( \frac{\partial(\theta_1(s,t),\theta_2(s,t))}{\partial(s,t)} \right) \right\vert  =4 (v^3\mathbf{p}^\mathbf{k}\mathbf{q}^\mathbf{l})^\frac{1}{2}\sqrt{a\theta_1^2+b\theta_2^2}.
\end{align}
Therefore 
\begin{align*}
    \operatorname{vol}(V)  = \frac{1}{4(v^3\mathbf{p}^\mathbf{k}\mathbf{q}^\mathbf{l})^\frac{1}{2}}\int_{\vert x\vert, \vert y \vert \leq 1} \frac{G(x,y)}{\sqrt{ax^2+by^2}}dxdy,
\end{align*}
with $G(x,y)\coloneqq  \#\{(s,t)\in V : F(s,t)=(x,y)\}$.
For $G(x,y)$ we obtain that $G(x,y)=1$ for almost all $(x,y)\in [-1,1]^2.$
Thus in total we obtain 
\begin{align*}
    \operatorname{vol}(V)  = \frac{1}{4(v^3\mathbf{p}^\mathbf{k}\mathbf{q}^\mathbf{l})^\frac{1}{2}}\int_{\vert x\vert, \vert y \vert \leq 1} \frac{1}{\sqrt{ax^2+by^2}}dxdy.
\end{align*}
This is the same integral as in 
\cite[Section~6.2]{shute}. Therefore, we get 
\begin{align}
    \operatorname{vol}(V) 
&= \frac{\overline{c}_{a,b}}{(v^3   \mathbf{p}^{\mathbf{k}}  \mathbf{q}^{\mathbf{l}})^{\frac{1}{2}}}
\end{align}
with 
$$ \overline{c}_{a,b} \coloneqq  \frac{\sinh^{-1} \left( \sqrt{\frac{a}{b}}\right) } {\sqrt{a}}+\frac{\sinh^{-1} \left( \sqrt{\frac{b}{a}}\right) } {\sqrt{b}}. $$

\subsubsection{Error terms}
Considering the different values of $\rho^{**}$ together with (\ref{eq:N_hat_3}) we get
\begin{align*}
    \hat{N}(Q,B) = \operatorname{vol}(V)\sum_{r\lambda | \frac{\Delta_Q}{2\mathbf{p}^\mathbf{k}\mathbf{q}^\mathbf{l}} } \frac{\mu(r) \rho^{**}(r,\lambda)}{r^2\lambda}  B\sum_{\substack{1 \leq m \\\gcd(m,r\lambda )=1}}  \frac{\mu(m)}{m^2} + E_1+E_2-E_3
\end{align*}
with
\begin{align*}
    E_1\coloneqq  \sum_{r\lambda |  \frac{\Delta_Q}{2\mathbf{p}^\mathbf{k}\mathbf{q}^\mathbf{l}} } \mu(r) \rho^{**}(r,\lambda) \sum_{\substack{1 \leq m \leq  \mathscr{L}\\\gcd(m,r\lambda )=1}}O\left( \frac{\mu(m)\mathscr{L}}{m}\right),
\end{align*}
\begin{align*}
    E_2\coloneqq  \sum_{r\lambda |  \frac{\Delta_Q}{2\mathbf{p}^\mathbf{k}\mathbf{q}^\mathbf{l}} } \mu(r) \rho^{**}(r,\lambda) \sum_{\substack{\mathscr{L} < m \leq (2BK_0/r)^\frac{1}{2}\\\gcd(m,r\lambda)=1}} O(1)
\end{align*}
and 
\begin{align*}
    E_3\coloneqq  \sum_{r\lambda |  \frac{\Delta_Q}{2\mathbf{p}^\mathbf{k}\mathbf{q}^\mathbf{l}} }  \mu(r) \rho^{**}(r,\lambda) \sum_{\substack{ m >  \mathscr{L}\\\gcd(m,r\lambda )=1}} \left( \frac{\mu(m)\text{vol}(V)B}{m^2r^2\lambda}\right).
\end{align*}
For $E_1$ we obtain
\begin{align*}
    E_1 
&\ll \left\vert \frac{(K_0B)^\frac{1}{2}}{1} \frac{\langle Q \rangle }{\vert \Delta_Q\vert^\frac{1}{2} }\right\vert\left|
\log\left(K_0B\right)\right|\sum_{r\lambda |  \frac{\Delta_Q}{2\mathbf{p}^\mathbf{k}\mathbf{q}^\mathbf{l}} } \left\vert \frac{\mu(r) \rho^{**}(r,\lambda) }{r\lambda^\frac{1}{2}} \right\vert,
\end{align*}
where 
\begin{align*}
    &\sum_{r\lambda |  \frac{\Delta_Q}{2\mathbf{p}^\mathbf{k}\mathbf{q}^\mathbf{l}} } \left\vert \frac{\mu(r) \rho^{**}(r,\lambda)}{r\lambda^\frac{1}{2}}  \right\vert\\
    &= \sum_{\substack{(r_p,\lambda_p)\\ \prod_p p^{r_p+\lambda_p}\vert 2abv^3}} \left( \prod_{p \vert \prod_p p^{r_p+\lambda_p}} \left\vert  \frac{\mu(p^{r_p})\rho^{**}(p^{r_p},p^{\lambda_p})}{p^{r_p+\frac{\lambda_p}{2}}}  \right\vert \right)\\
    &\leq  \prod_{p \vert 2abv^3} \left( 1+ \sum_{\substack{i,j\geq 0 \\ i +j >0 \\ p^{i+j}\vert 2abv^3\mathbf{p}^{\mathbf{k}}\mathbf{q}^{\mathbf{l}}}} \left\vert \frac{\mu(p^{i})\rho^{**}(p^{i},p^j)}{p^{i+\frac{j}{2}}} \right\vert \right)\\
    &\leq  \prod_{p \vert 2abv^3} \tau_p
\end{align*}
with $\tau_p =  \left( 1+ \sum_{\substack{i,j\geq 0 \\ i +j >0 \\ p^{i+j}\vert 2abv^3\mathbf{p}^{\mathbf{k}}\mathbf{q}^{\mathbf{l}}}} \left\vert \frac{\mu(p^{i})\rho^{**}(p^{i},p^j)}{p^{i+{\frac{j}{2}}}} \right\vert  \right)$. \\
Computing $\tau_p$ for the different primes we get 
\begin{align*}
    \tau_p = \begin{cases}
        1+ \frac{1}{2^{\frac{1}{2}}}+ \frac{1}{2} & \text{ if } p=2\\
        \frac{p^\frac{3}{2}+2p-p^\frac{1}{2}-1}{p} & \text{ if } p \vert ab\\
        \frac{p^3 + 2p^\frac{5}{2}-p^\frac{3}{2}+p-p^\frac{1}{2}-1}{p} & \text{ if } p \vert v.
    \end{cases}
\end{align*}
Therefore,
\begin{align*}
    \prod_{p \vert 2abv^3} \tau_p 
    &\ll \prod_{p \vert v} \frac{p^3 + 2p^\frac{5}{2}-p^\frac{3}{2}+p-p^\frac{1}{2}-1}{p}
\end{align*} and we get for the error term
\begin{align*}
    E_1 
&\ll \left\vert \frac{(K_0B)^\frac{1}{2}}{1} \frac{\langle Q \rangle }{\vert \Delta_Q\vert^\frac{1}{2} }\right\vert\left|
\log\left(K_0B\right)\right|\operatorname{rad}(v)^{2+\epsilon}.
\end{align*}
We use the same idea to obtain for the error term $E_2$ that
\begin{align*}
    E_2 \ll (BK_0)^\frac{1}{2}\operatorname{rad}(v)^{\frac{7}{2}+\epsilon}.
\end{align*}
For $E_3$ we obtain 
\begin{align*}
    E_3 &\ll \frac{\vert \Delta_Q\vert^\frac{1}{2} }{(K_0)^\frac{1}{2} \langle Q \rangle}\frac{B^\frac{1}{2}}{v^\frac{3}{2}} \operatorname{rad}(v)^{\frac{5}{2}+\epsilon}.
\end{align*}
Since $E_2\ll E_1$ and $E_3\ll E_1$ we conclude 
\begin{align}\label{eq:N_norm}
     N_{\Vert \cdot \Vert_{\text{new}}^{\prime}}(Q,B) &=\hat{N}(Q,B) + O(1)\\
&=c_{(\mathbf{k},\mathbf{l}),v}B + O\left(  \frac{(K_0B)^\frac{1}{2}\langle Q \rangle }{\vert \Delta_Q\vert^\frac{1}{2} }\left| \log\left(K_0B\right)\right|\operatorname{rad}(v)^{2+\epsilon}\right) \notag\\
 \end{align}
 for the constant \begin{align}\label{eq:constant_c}
     c_{(\mathbf{k},\mathbf{l}),v} \coloneqq   \text{vol}(V) \sum_{r\lambda |  \frac{\Delta_Q}{2\mathbf{p}^\mathbf{k}\mathbf{q}^\mathbf{l}} }  \frac{\mu(r)\rho^{**}(r,\lambda)}{r^2\lambda}  \sum_{\substack{m \in \mathbb{N}\\\gcd(m,r\lambda )=1}} \frac{\mu(m)}{m^2}.
 \end{align}
 The constant will be explained in Subsection~\ref{sec:computing_constant}.\\ 
We observe   
$\Delta_Q = 4 a  b v^3 \mathbf{p}^\mathbf{k}\mathbf{q}^\mathbf{l}  \ll \mathbf{p}^\mathbf{k}\mathbf{q}^\mathbf{l}v^3  $
and
\begin{align*}
    \langle Q \rangle &= \max(|at_{11}^2+bt_{21}^2|,|2at_{11}t_{12}+2bt_{21}t_{22}|,|- 2  \mathbf{p}^\mathbf{k}\mathbf{q}^\mathbf{l}v^3t_{32}|, |-\mathbf{p}^\mathbf{k}\mathbf{q}^\mathbf{l}v^3|)
\end{align*}

Thus $ \langle Q \rangle \ll \Vert M \Vert_\infty\left( \Vert M \Vert_\infty + v^3\mathbf{p}^\mathbf{k}\mathbf{q}^\mathbf{l} \right) $ where $\Vert M \Vert_\infty = \max_{i,j} \vert m_{i,j}\vert $ .\\
Furthermore let $ \Vert M \Vert_{\text{new}, \infty}$ be the row sum norm. Furthermore we observe
\begin{align*}
    K_0= \sup_{\mathbf{x} \neq 0} \left( 1 + \frac{\Vert \mathbf{x} \Vert_{\infty}}{\Vert \mathbf{x} \Vert_{\text{new}}^{\prime}}\right)=\sup_{\mathbf{x} \neq 0} \left( 1 + \frac{\Vert (gM)^{-1}\mathbf{x} \Vert_{\infty}}{\Vert \mathbf{x} \Vert_{\infty}}\right) \ll \Vert (gM)^{-1} \Vert_{\text{new},\infty}.
\end{align*}
And we observe 
\begin{align*}
   &\Vert (gM)^{-1} \Vert_{\text{new},\infty}\\
   &= \max\left(\vert t_{22} | + |t_{12}|, |t_{21}| + |t_{11}|, |t_{32}t_{21}|+|t_{32}t_{11}|+ \sqrt{\frac{v^3 \mathbf{p}^\mathbf{k}\mathbf{q}^\mathbf{l}}{a+b}}\right)\\
   &\ll  \Vert M \Vert_{\infty}^2 + (v^3 \mathbf{p}^\mathbf{k}\mathbf{q}^\mathbf{l})^{\frac{1}{2}}.
\end{align*}
Together with (\ref{eq:N_norm}) we obtain 
\begin{align}\label{eq:N_norm_2}
     N_{\Vert \cdot \Vert_{\text{new}}^{\prime}}(Q,B) 
=c_{(\mathbf{k},\mathbf{l}),v}B
+ O\left(   E_4\right)
 \end{align}
where
 \begin{align*}
     E_4 = \frac{\left(\left(  \Vert M \Vert_{\infty}^2 + (v^3 \mathbf{p}^\mathbf{k}\mathbf{q}^\mathbf{l})^{\frac{1}{2}}\right)B)^\frac{1}{2}(\Vert M \Vert_\infty\left( \Vert M \Vert_\infty + v^3 \mathbf{p}^\mathbf{k}\mathbf{q}^\mathbf{l} \right)\right) }{\vert v^3 \mathbf{p}^\mathbf{k}\mathbf{q}^\mathbf{l}\vert^\frac{1}{2} }\\ \times \left|
\log\left(\left(  \Vert M \Vert_{\infty}^2 + (v^3 \mathbf{p}^\mathbf{k}\mathbf{q}^\mathbf{l})^{\frac{1}{2}}\right)B\right)\right|\operatorname{rad}(v)^{2+\epsilon}. \end{align*}
In the following, we estimate $\Vert M \Vert_{\infty}$. For that we need a bound for the smallest integral zero of the conic $Q_{(\mathbf{k},\mathbf{l}),v}$.
\begin{lemma}\label{lemma:bound_for_small_solution}
We use the notation above. If the equation $Q_{(\mathbf{k},\mathbf{l}),v}(\mathbf{x}) = 0$
has a nontrivial solution  with $\mathbf{x} \in \mathbb{Z}^3, \gcd(x_1,x_2)=1$, it has a solution satisfying  $\vert \vert \mathbf{x} \vert \vert_\infty \ll v^3  \mathbf{p}^{\mathbf{k}} \mathbf{q}^{\mathbf{l}}  $ with $\mathbf{x} \in \mathbb{Z}^3, \gcd(x_1,x_2)=1$.              
\end{lemma}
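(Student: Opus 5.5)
Write $c \coloneqq v^3\mathbf{p}^{\mathbf{k}}\mathbf{q}^{\mathbf{l}}$, so the conic is $ax_1^2+bx_2^2=cx_3^2$. The plan is to invoke Holzer's theorem (in the form of Cassels, or Mordell's refinement) on the small zeros of ternary diagonal forms. If $\alpha x^2+\beta y^2=\gamma z^2$ with $\alpha,\beta,\gamma>0$ pairwise coprime and squarefree has a nontrivial integer solution, then it has one with $|x|\le\sqrt{\beta\gamma}$, $|y|\le\sqrt{\alpha\gamma}$ and $|z|\le\sqrt{\alpha\beta}$.

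Our coefficients are not squarefree, since $v^3$ occurs. I would therefore first reduce to squarefree coefficients. Here $a$, $b$ and $v$ are pairwise coprime and squarefree, and $\mathbf{p}^{\mathbf{k}}\mid a$, $\mathbf{q}^{\mathbf{l}}\mid b$.

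\emph{First reduction.} Put $x_3=x_3'$ and absorb $v^2$: the equation becomes $ax_1^2+bx_2^2=v\mathbf{p}^{\mathbf{k}}\mathbf{q}^{\mathbf{l}}(vx_3)^2$. So it suffices to solve $ax_1^2+bx_2^2=v\mathbf{p}^{\mathbf{k}}\mathbf{q}^{\mathbf{l}}w^2$ with the extra condition $v\mid w$.

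\emph{Second reduction.} Instead of fighting the condition $v\mid w$, I would remove the common factors of the coefficients, as in the proof of the lemma for $p\mid\mathbf{p}^{\mathbf{k}}$. A primitive solution forces $\mathbf{p}^{\mathbf{k}}\mid x_2$ and $\mathbf{q}^{\mathbf{l}}\mid x_1$. Writing $a=\mathbf{p}^{\mathbf{k}}\hat a$, $b=\mathbf{q}^{\mathbf{l}}\hat b$, $x_2=\mathbf{p}^{\mathbf{k}}y_2$ and $x_1=\mathbf{q}^{\mathbf{l}}y_1$ and dividing by $\mathbf{p}^{\mathbf{k}}\mathbf{q}^{\mathbf{l}}$ gives
\[
\hat a\mathbf{q}^{\mathbf{l}}y_1^2+\hat b\mathbf{p}^{\mathbf{k}}y_2^2=v^3x_3^2 .
\]
The coefficients $A=\hat a\mathbf{q}^{\mathbf{l}}$ and $B'=\hat b\mathbf{p}^{\mathbf{k}}$ are coprime and squarefree and are coprime to $v$.

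\emph{Solving with $C=v^3$.} The cleanest route to the cubed factor is to note that $v^3x_3^2=v(vx_3)^2$. Solvability of the original equation implies solvability of $AY_1^2+B'Y_2^2=vZ^2$, whose coefficients are pairwise coprime and squarefree. Holzer then gives a solution with $|Y_1|\le\sqrt{B'v}$, $|Y_2|\le\sqrt{Av}$ and $|Z|\le\sqrt{AB'}$. To recover $v\mid Z$, I would use the composition trick: multiply by a suitable parametrization. Concretely, the rational parametrization of the conic through the Holzer point gives all solutions as quadratic forms in $(s,t)$ with coefficients of size $\ll\sqrt{ABv}\cdot\max(A,B',v)^{1/2}$. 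I then choose $(s,t)$ modulo $v$, which is possible by CRT and the local solubility at each $p\mid v$, so that $Z(s,t)\equiv0 \bmod v$ with $p\nmid$ the other coordinates. Taking $|s|,|t|\le v$ yields a solution with $\|\cdot\|_\infty\ll v^2\cdot(\text{Holzer size})$. After scaling back, dividing out common factors to restore $\gcd(x_1,x_2)=1$, and multiplying by $\mathbf{p}^{\mathbf{k}},\mathbf{q}^{\mathbf{l}}$, the result is bounded by a fixed power of $v^3\mathbf{p}^{\mathbf{k}}\mathbf{q}^{\mathbf{l}}$, with $a,b$ absorbed into the implied constant.

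\emph{Main obstacle.} The hard step is getting exactly the exponent $1$ in $\|\mathbf{x}\|_\infty\ll v^3\mathbf{p}^{\mathbf{k}}\mathbf{q}^{\mathbf{l}}$. The crude parametrization argument above may lose a power. If it does, I would instead apply Cassels' bound for general (non-diagonal, non-squarefree) ternary forms directly: an isotropic integral ternary form $F$ has a zero with $\|\mathbf{x}\|\ll|\Delta_F|^{1/2}\cdot$ (coefficient size)$^{?}$, or, in Kneser's form, $\ll(\text{coefficient size})$. Here this gives $\ll\max(a,b,c)\ll c$, since $a,b$ are fixed. Primitivity can then be arranged by removing $\gcd(x_1,x_2)=g$. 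Because $g^2\mid cx_3^2$ and $\gcd(g,x_3)$ can be divided out, this only decreases the size of the solution.
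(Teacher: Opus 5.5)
Your main route is in substance the paper's proof. The paper also trades $v^3$ for $v$, working with $Q_0=ax_1^2+bx_2^2-v\mathbf{p}^{\mathbf{k}}\mathbf{q}^{\mathbf{l}}x_3^2$, and takes a small zero $\mathbf{y}$. It then applies exactly your parametrization step, in the form $\mathbf{N}=Q_0(\mathbf{P})\mathbf{y}-2B_0(\mathbf{y},\mathbf{P})\mathbf{P}$ with $\mathbf{P}=(r,1,0)$, $0\le r<v$ and $ar^2+b\equiv 0 \pmod v$. The only difference is the source of the small zero. The paper applies Cassels' bound directly to $Q_0$, which needs no coprimality of coefficients, and gets $\|\mathbf{y}\|_\infty\ll v\mathbf{p}^{\mathbf{k}}\mathbf{q}^{\mathbf{l}}$. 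You strip $\mathbf{p}^{\mathbf{k}},\mathbf{q}^{\mathbf{l}}$ first so that Holzer applies.

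The real problem is that you do not finish this route. You conclude only ``a fixed power'' and rest the exponent on a fallback that is wrong. Applying Cassels/Kneser directly to $Q_{(\mathbf{k},\mathbf{l}),v}$ can return zeros such as $(vy_1,vy_2,y_3)$, where $\mathbf{y}$ is a zero of $Q_0$. Primitivity cannot then be restored by division. Once $\gcd(x_1,x_2,x_3)=1$, the number $g=\gcd(x_1,x_2)$ is coprime to $x_3$ and divides $v$. So $(x_1/g,x_2/g,x_3)$ is a zero of $ax_1^2+bx_2^2-(v^3\mathbf{p}^{\mathbf{k}}\mathbf{q}^{\mathbf{l}}/g^2)x_3^2$, which is a different form. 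A Cassels-type bound cannot see the congruence condition $\gcd(x_1,x_2)=1$; enforcing it is precisely what the reflection step is for.

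So complete the main route; nothing is lost. Since $a,b$ are fixed, $A,B'\le ab$. Holzer then gives a primitive zero $\mathbf{y}_0=(Y_1^0,Y_2^0,Z^0)$ with $\|\mathbf{y}_0\|_\infty\ll_{a,b}\sqrt v$. With $\mathbf{P}=(r,1,0)$ you have $Q(\mathbf{P})=Ar^2+B'\ll v^2$ and $B(\mathbf{y}_0,\mathbf{P})=AY_1^0r+B'Y_2^0\ll v^{3/2}$. Hence $\|\mathbf{N}\|_\infty\ll v^{5/2}$, which is even stronger than the lemma. Two points must be made explicit.

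\textbf{The choice of root.} By the earlier lemma $v$ is odd, and $p\nmid Y_2^0$ for $p\mid v$ by primitivity. You must take $r\equiv -Y_1^0(Y_2^0)^{-1}\pmod p$. Then $B(\mathbf{y}_0,\mathbf{P})\equiv 2B'Y_2^0\not\equiv 0$, so $N_2\equiv -4B'Y_2^0\not\equiv 0 \pmod p$. The other root gives $B\equiv 0$ and hence $p\mid N_1,N_2$, which destroys primitivity.

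\textbf{Coprimality.} Divide $(N_1,N_2,N_3/v)$ by its gcd. Any common prime of the first two coordinates would then have to divide $v$, and $p\nmid N_2$ excludes this. Multiplying back to $x_1=\mathbf{q}^{\mathbf{l}}Y_1$, $x_2=\mathbf{p}^{\mathbf{k}}Y_2$ keeps $\gcd(x_1,x_2)=1$. Indeed, a prime $p\mid\mathbf{p}^{\mathbf{k}}$ dividing $Y_1$ would force $p\mid Z$ and then $p\mid Y_2$, and symmetrically for $\mathbf{q}^{\mathbf{l}}$.
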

\begin{proof}
 Assume the equation $Q_{(\mathbf{k},\mathbf{l}),v}(\mathbf{x}) = 0$
has a nontrivial solution  with $\mathbf{x} \in \mathbb{Z}^3, \gcd(x_1,x_2)=1$. We fix the form $Q_0(x_1,x_2,x_3)\coloneqq ax_1^2 + bx_2^2 - v\mathbf{p}^{\mathbf{k}}\mathbf{q}^\mathbf{l}x_3^2$. Then $Q_0$ has an integral zero. By 
\cite[Theorem]{Cassels_1955}
there exist $\mathbf{y} \in \mathbb{Z}_{\text{prim}}^3$ with $Q_0(\mathbf{y})=0$ and $\Vert \mathbf{y} \Vert_{\infty} \leq \max(|a|,|b|,|v\mathbf{p}^{\mathbf{k}}\mathbf{q}^\mathbf{l}|) \ll v\mathbf{p}^{\mathbf{k}}\mathbf{q}^\mathbf{l}$. Furthermore, we fix a prime $p$ such that $p\vert v$. Then $p\nmid y_1y_2$ and $y_2$ is invertible modulo $p$.
Modulo $p$ we obtain $a(y_1y_2^{-1})^2 +b \equiv 0 \mod p$.
By the Chinese Remainder Theorem there exist $r\in \mathbb{Z}$ with $0 \leq r < v$ and $r \equiv -y_1y_2^{-1} \mod p$ for all $p\vert v$. 
Therefore, $ar^2+b \equiv 0 \mod v.$  
Fix $\mathbf{P}=(r,1,0)$ and let $B_0$ be the bilinear form corresponding to $Q_0$. Then $B_0(\mathbf{y},\mathbf{P}) = ay_1r+by_2$.
We consider the following quadratic parametrization 
\begin{align*}
\mathbf{N}&\coloneqq Q_0(\mathbf{P})\mathbf{y}-2B_0(\mathbf{y},\mathbf{P})\mathbf{P}\\   &= \begin{pmatrix}
        (ar^2+b)y_1 \\ (ar^2+b)y_2 \\ (ar^2+b)y_3
    \end{pmatrix} - \begin{pmatrix}
        2(ay_1r +by_2)r\\
        2(ay_1r +by_2)\\ 0
    \end{pmatrix} =: \begin{pmatrix}
            N_1 \\ N_2 \\ N_3 
        \end{pmatrix}
\end{align*}
Since $Q_0(\mathbf{N})=Q_0(\mathbf{P})^2Q_0(\mathbf{y})=0$, we obtain that $\mathbf{N}$ is an integral zero of $Q_0$.
Furthermore since $v\vert ar^2+b$ we get $v \vert N_3$. Thus, $\left( N_1, N_2, \frac{N_3}{v} \right) $ is an integral zero of $Q_{(\mathbf{k},\mathbf{l}),v}$. 
Next, we observe that for every prime $p\vert v$ we have 
$ B_0(\mathbf{y},\mathbf{P}) \equiv  -ay_1^2y_2^{-1} +by_2 \equiv 2by_2  \mod p$. 
Since $p\nmid 2by_2$ we get $p\nmid B_0(\mathbf{y},\mathbf{P})$. 
Thus we obtain $N_2 \equiv -2B_0(\mathbf{y},\mathbf{P}) \equiv -4by_2 \mod p$ and $\gcd(N_2,v)=1$.
Now fix $\mathbf{x} = \left( \frac{N_1}{\lambda}, \frac{N_2}{\lambda}, \frac{\frac{N_3}{v} }{\lambda} \right)$
with $\lambda = \gcd(N_1,N_2,\frac{N_3}{v})$. Observe that $\gcd(x_1,x_2)=1$. Thus, $\mathbf{x}$ is an integral solution of $Q_{(\mathbf{k},\mathbf{l}),v}$ with $\gcd(x_1,x_2)=1$.
It remains to show the bound for $\Vert \mathbf{x} \Vert_{\infty}$. Observe that  $\Vert \mathbf{y} \Vert_{\infty} \ll \mathbf{p}^\mathbf{l}\mathbf{q}^\mathbf{l}$ and since $0 \leq r <v$ we have $\Vert P \Vert_{\infty}\ll v$.
Furthermore, $|Q_0(\mathbf{P})| = |ar^2+b|  \ll v^2$ and 
 $|B_0(\mathbf{y},P)|=  ay_1r +by_2 \ll v^2\mathbf{p}^\mathbf{k}\mathbf{q}^\mathbf{l}$. Therefore we obtain in total \begin{align*}
     \Vert \mathbf{N}\Vert_{\infty} \leq  |Q_0(\mathbf{P})|\Vert\mathbf{y}\Vert_{\infty}+2\vert B_0(\mathbf{y},\mathbf{P})\vert \Vert \mathbf{P}\Vert_{\infty} \ll v^3 \mathbf{p}^\mathbf{k}\mathbf{q}^\mathbf{l}.
 \end{align*} 
\end{proof}
Now we apply Lemma~\ref{lemma:bound_for_small_solution} and obtain $\Vert M \Vert_{\infty} \ll  v^3 \mathbf{p}^{\mathbf{k}} \mathbf{q}^{\mathbf{l}}$.
Thus we obtain for $N_{\Vert \cdot \Vert^\prime }(Q,B)$ in equation  (\ref{eq:N_norm_2}):
\begin{align}\label{eq:N_k_l_v}
N_{(\mathbf{k},\mathbf{l}),v}(B)
    &= c_{(\mathbf{k},\mathbf{l}),v}B + O(B^{\frac{1}{2}}v^{\frac{15}{2}}(\mathbf{p}^\mathbf{k}\mathbf{q}^\mathbf{l})^{\frac{5}{2}}\log\left(Bv^{6}(\mathbf{p}^\mathbf{k}\mathbf{q}^\mathbf{l})^2 \right)\operatorname{rad}(v)^{2+\epsilon})\\
    &= c_{(\mathbf{k},\mathbf{l}),v}B + O(B^{\frac{1}{2}+\epsilon}v^{\frac{19}{2}+\epsilon}(\mathbf{p}^\mathbf{k}\mathbf{q}^\mathbf{l})^{\frac{5}{2}+\epsilon}).\notag
\end{align}

\section{Computing the constant $c_{(\mathbf{k},\mathbf{l}),v}$}\label{sec:computing_constant}Combining (\ref{eq:constant_c}) and Lemma~\ref{lemma:gcd} we write the constant $ c_{(\mathbf{k},\mathbf{l}),v}$ as
\begin{align}
    c_{(\mathbf{k},\mathbf{l}),v} =  \text{vol}(V) \sum_{r\lambda | 2abv^3} \frac{\mu(r)\rho^{**}(r,\lambda)}{r^2\lambda}  \sum_{\substack{m \in \mathbb{N}\\\gcd(m,r\lambda )=1}} \frac{\mu(m)}{m^2}.
\end{align}
We observe that we can write $\sum_{n=1}^\infty \frac{\mu(n)}{n^s}$ as an Euler product. Explicitly we obtain $\sum_{n=1}^\infty \frac{\mu(n)}{n^s} = \prod_p (1-p^{-s})$. Therefore we get 
\begin{align}
     c_{(\mathbf{k},\mathbf{l}),v} =  \frac{6\text{vol}(V)}{\pi^2} \sum_{r\lambda |2abv^3 } \frac{\mu(r)\rho^{**}(r,\lambda)}{r^2 \lambda}  \prod_{\substack{p \mid r^2\lambda}} \left(1 - \frac{1}{p^2}\right)^{-1}.
\end{align}
In the following, we write 
$r = \prod_{p} p^{r_p},$
where $r_p \in \{0,1\}$ for every prime $p$, with only finitely many $r_p$ being nonzero.  
Similarly, we set
$\lambda = \prod_{p} p^{\lambda_p},$
where $\lambda_p \in \{0,1,2,3\}$ for every prime $p$, and only finitely many of the exponents $\lambda_p$ are nonzero.
Therefore we obtain together with Lemma~\ref{lemma:rho_star_star_multiplicative}:
\begin{align*}
         c_{(\mathbf{k},\mathbf{l}),v} = \frac{6\text{vol}(V)}{\pi^2}\sum_{\substack{(r_p,\lambda_p)_p\\\prod_p p^{r_p+\lambda_p} \vert 2abv^3 }} \left(\prod_{\substack{p \vert \prod_p p^{r_p+\lambda_p} }} \frac{\mu(p^{r_p})\rho^{**}(p^{r_p},p^{\lambda_p})}{p^{2r_p+\lambda_p}}\left( 1-\frac{1}{p^2}\right)^{-1}  \right).
\end{align*}
We can decompose the full sum as an Euler product. Therefore we obtain
\begin{align}\label{eq:c_k_l_v}
     c_{(\mathbf{k},\mathbf{l}),v} =  \frac{6\text{vol}(V)}{\pi^2}\prod_{\substack{p\vert 2abv^3}}\left(1+  \sum_{\substack{i,j \geq  0\\i+j>0 \\ p^{i+j}\vert 2abv^3}} \frac{\mu(p^{i})\rho^{**}(p^{i},p^{j})}{p^{2i+j}}\left(1-\frac{1}{p^2}\right)^{-1} \right).
\end{align}
Let $\lambda_p =  \left( 1+ \sum_{\substack{i,j\geq 0 \\ i +j >0 \\ p^{i+j}\vert 2abv^3}}  \frac{\mu(p^{i})\rho^{**}(p^{i},p^j)}{p^{2i+j}} \left(1-\frac{1}{p^2}\right)^{-1}  \right)$. \\
We compute $\lambda_p$ for the different primes.\\
For $p=2$ we compute $\lambda_2 = \frac{4}{3}$ and if $p\vert a$ or $p\vert b$ we obtain $\lambda_p = \frac{2p}{p+1}.$
For the last case $p\vert v$ we get $\lambda_p= \frac{2p}{p+1}.$
In total we get $\prod_{p \vert 2abv^3} \lambda_p = \frac{4}{3}\prod_{p\vert abv}\frac{2p}{p+1}.$
Define 
$\gamma_{(\mathbf{k},\mathbf{l}),v} \coloneqq   \prod_{p \vert abv} \frac{p}{p+1}$
and 
\begin{align*}
\varrho_{(\mathbf{k},\mathbf{l}),v}\coloneqq  \prod_{p \vert \frac{a}{\mathbf{p}^\mathbf{k}}}\left(1 + \left(\frac{bv\mathbf{p}^k\mathbf{q}^\mathbf{l}}{p} \right)\right)\prod_{p \vert \frac{b}{\mathbf{q}^\mathbf{l}}}\left(1 + \left(\frac{av\mathbf{p}^k\mathbf{q}^\mathbf{l}}{p} \right)\right)\prod_{p \vert v}\left(1 + \left(\frac{-ab}{p} \right)\right)\\ \times \prod_{p \vert \mathbf{q}^\mathbf{l}}\left(1 + \left( \frac{ \frac{bv\mathbf{p}^{\mathbf{k}}\mathbf{q}^\mathbf{l}}{p^2}}{p} \right)\right)\prod_{p \vert \mathbf{p}^\mathbf{k}}\left(1 + \left( \frac{ \frac{av\mathbf{p}^{\mathbf{k}}\mathbf{q}^\mathbf{l}}{p^2}}{p} \right)\right) .
\end{align*}
With this definitions we obtain
\begin{align*}
c_{(\mathbf{k},\mathbf{l}),v}
  &= \frac{8\overline{c}_{a,b}\gamma_{(\mathbf{k},\mathbf{l}),v} \varrho_{(\mathbf{k},\mathbf{l}),v}}{\pi^2(v^3   \mathbf{p}^{\mathbf{k}}  \mathbf{q}^{\mathbf{l}})^{\frac{1}{2}}}.
\end{align*}
With the asymptotic formula (\ref{eq:N_k_l_v}) and the bound for $Z_1$ in (\ref{bound:Z_1}) we obtain
\begin{align*}
    \hat{N}_{(\mathbf{k},\mathbf{l})}(B)= \frac{B}{2} \sum_{v<B^\delta}\mu^2(abv)c_{(\mathbf{k},\mathbf{l}),v}
    + O\left(B^{\frac{1}{2}+\frac{21\delta}{2}+\epsilon}(\mathbf{p}^\mathbf{k}\mathbf{q}^\mathbf{l})^{\frac{5}{2}+\epsilon}\right)\\
    + O\left(\frac{B^{1+\epsilon - \frac{1}{2}\delta }}{(\mathbf{p}^\mathbf{k}\mathbf{q}^\mathbf{l})^\frac{1}{2}}\right).
\end{align*}

We choose $\delta$ such that the two error terms are balanced, so we take $\delta = 1/22$ and get 
\begin{align*}
    \hat{N}_{(\mathbf{k},\mathbf{l})}(B)
    &= \frac{B}{2} \sum_{v<B^{\frac{1}{22}}}\mu^2(abv)c_{(\mathbf{k},\mathbf{l}),v}
    + O\left(B^{\frac{43}{44}+\epsilon}(\mathbf{p}^\mathbf{k}\mathbf{q}^\mathbf{l})^{\frac{5}{2}+\epsilon}\right).
\end{align*}
We observe that $c_{(\underline{k},\underline{l}),v} \ll (v^3\mathbf{p}^\mathbf{k}\mathbf{q}^\mathbf{l})^{-\frac{1}{2}+\epsilon}$ and therefore we can use an infinite sum over $v$. Thus we get  
\begin{align*}\hat{N}_{(\mathbf{k},\mathbf{l})}(B) &= c_{(\mathbf{p}^\mathbf{k},\mathbf{q}^\mathbf{l})}B + O(B^{\frac{43}{44}+\epsilon}(\mathbf{p}^\mathbf{k}\mathbf{q}^\mathbf{l})^{\frac{5}{2}+\epsilon})
\end{align*}
with 
\begin{align}\label{eq:constant_c_k_l_2}
    c_{(\mathbf{p}^\mathbf{k},\mathbf{q}^\mathbf{l})}&=\frac{4\overline{c}_{a,b}}{\pi^2}\sum_{\substack{v\\ v\mathbf{p}^\mathbf{k}\mathbf{q}^\mathbf{l} \equiv 1 \mod 4 }}  \frac{\mu^2(abv)\gamma_{(\mathbf{k},\mathbf{l}),v} \varrho_{(\mathbf{k},\mathbf{l}),v}}{(v^3   \mathbf{p}^{\mathbf{k}}  \mathbf{q}^{\mathbf{l}})^{\frac{1}{2}}}.
\end{align}
We conclude the proof of Theorem~\ref{thm:theorem_1} with obtaining
\begin{align*}
  N_S(B) &=  \sum_{\mathbf{k}\in \{0,1\}^n}\sum_{\mathbf{l}\in \{0,1\}^m} \hat{N}_{(\mathbf{k},\mathbf{l})}(B)\\
     &=CB + O\left( B^{\frac{43}{44}+\epsilon} \right),
\end{align*}
where
\begin{align}\label{eq:C_final}
C&=\sum_{\mathbf{k}\in \{0,1\}^n}\sum_{\mathbf{l}\in \{0,1\}^m} c_{(\mathbf{p}^\mathbf{k},\mathbf{q}^\mathbf{l})}\\
    &=\frac{4 \overline{c}_{a,b}}{\pi^2} \sum_{\mathbf{k}\in \{0,1\}^n}\sum_{\mathbf{l}\in \{0,1\}^m} \frac{1}{(\mathbf{p}^{\mathbf{k}}  \mathbf{q}^{\mathbf{l}})^{\frac{1}{2}}}\sum_{\substack{v\\ v\mathbf{p}^{\mathbf{k}}  \mathbf{q}^{\mathbf{l}} \equiv 1 \text{ mod }4}} \frac{\mu^2(abv)\gamma_{(\mathbf{k},\mathbf{l}),v}\varrho_{(\mathbf{k},\mathbf{l}),v}}{v^\frac{3}{2}}. \notag
\end{align}

\begin{remark}
The situation is the same in the special case of $(\mathbf{k},\mathbf{l}) = (0, 0)$, which was studied by Shute.
In this case, the constant $C$ is explicitly known from
\cite[equation~(6.19)]{shute}
and takes the form
\begin{align*}
    C= \frac{4 \overline{c}_{a,b}}{\pi^2 } \sum_{\substack{v\\ v\equiv 1 \text{ mod }4}} \left(  \prod_{\substack{p \vert abv\\ p>2}} \left( 1+ \frac{1}{p} \right)^{-1} \right)\frac{\mu^2(abv)\varrho_{(\mathbf{k},\mathbf{l}),v}}{v^\frac{3}{2}}.
\end{align*} 
\end{remark}

\section{The PSTV-A conjecture}\label{sec:PSTVA}
As explained at the end of Section~\ref{sec:campana}, we can apply the PSTV-A conjecture to our setting.
We follow the first part of \cite[Section~6]{shute}.
There Shute considers the Campana orbifold $(X,D)$ with $X=\mathbb{P}^1$ and $D=\frac{1}{2}V(ax^2+by^2)$ and counts Campana-$\mathbb{Z}$-points. Instead we count Campana-$\mathbb{Z}[\frac{1}{ab}]$-points. 
By Section~\ref{sec:campana}, the PSTV-A conjecture applies since the set of Campana-points is not itself thin.
We compute $c_{\text{PSTV-A}}$ for the height $H([x:y])=\max(|x|,|y|)$ and $(x,y)\in \mathbb{Z}_{\text{prim}}^2$. 
We recall Definition~\ref{def:def2.6_in_Shute}. To avoid confusion, we will denote the quantities $a$ and $b$, as defined in Definition~\ref{def:def2.6_in_Shute}, as ${\tilde{a}}$ and ${\tilde{b}}$, respectively.
With \cite[Section~3]{shute} we get 
$$c_{\text{PSTV-A}} = \frac{\alpha \beta \tau }{\tilde{a}(\tilde{b}-1)!}.$$
We will explain what $\alpha,\beta$ and $\tau$ are in our case, a general definition is given in \cite{16inshute}. 
We observe as in \cite[Section~6.1]{shute} that $\tilde{b}=\beta = 1$. 
For $a$ we obtain $\deg(L)=1$, $\deg[K_{\mathbb{P}^1}]=-2$ and $\deg(D)=\frac{1}{2}\deg(V(ax^2+by^2)) = \frac{1}{2}\cdot 2 = 1$. Therefore
$$\tilde{a} = \inf \{t\in \mathbb{R}: t\cdot 1 -2 >0\} = \inf\{t\in \mathbb{R}: t>1\}=1.$$
For $\alpha $ we obtain 
$$\alpha = \left(1-\frac{1}{2}\right)\int_0^\infty e^{-x}dx = \frac{1}{2}.$$
Therefore we obtain 
\begin{align*}
    c_{\text{PSTV-A}} &=  \frac{\tau}{2}. 
\end{align*}
We observe that the divisor has degree $2$ and that $D$ corresponds to the line bundle $\mathscr{O}_{\mathbb{P}^1}(2).$ The line bundle determines the height function $\max(|x^2|,|y^2|)$ for $(x,y)\in \mathbb{Z}_{\text{prim}}^2$.
Let $v$ range over all primes and $v=\infty$. Then we use the same height function as Shute
$$H_{D} = \prod_v H_{D_{},v},$$
with 
$$H_{D,v} = \frac{\max(|x|_v, |y|_v)}{\left\vert ax^2+by^2\right\vert_v^{\frac{1}{2}}}.$$
Since we count $S$-integers we have the local Campana condition that\\ $\nu_p(ax^2+by^2)-2\min(\nu_p(x),\nu_p(y)) \neq 1$ only if $p \notin S$. Let $y \neq 0$ and set $z = x/y$. Then the condition that $\nu_p(ax^2+by^2)-2\min(\nu_p(x),\nu_p(y)) \neq 1$ is equivalent to $\nu_p(az^2+b)$ is not $1$ or a negative odd integer.\\
Furthermore we define $\Omega_p$ to be the elements in $\mathbb{Q}_p$ that are satisfying the local Campana condition. We set $\Omega_\infty = \mathbb{R}.$ As in Shute we obtain 
\begin{align}
    c_{\text{PSTV-A}} = \frac{1}{2}\sigma_{\infty}\prod_p \left( 1- \frac{1}{p}\right)\sigma_{p}
\end{align}
with
\begin{align}\label{eq:sigma_v}
    \sigma_{v} = \int_{\Omega_v}\frac{dz}{\max(|z|_v,1) \left\vert az^2+b\right\vert_v^\frac{1}{2}}. 
\end{align}
To compute $\sigma_\infty$ we follow   
\cite[(6.3)]{shute} and get 
\begin{align}\label{eq:sigma_infinity}
    \sigma_{\infty} &= \int_{\vert z \vert > 1} \frac{dz}{\left( az^2+b\right)^\frac{1}{2}} + \int_{\vert z \vert \leq 1} \frac{dz}{|z|\left( az^2+b\right)^\frac{1}{2}}= 2 \overline{c}_{a,b}.
\end{align}
For the densities at the finite primes we obtain the following lemma. 
\begin{lemma}\label{eq:sigma_p}
We have 
\begin{align}
    \sigma_{p}= \begin{cases}
        1+p^{-1}+\left( 1+ \left( \frac{-ab}{p} \right)\right)p^{-\frac{3}{2}} & \text{ if } p \nmid 2ab \\
        1 & \text{ if }p\vert 2ab \text{ and } p \notin S\\
        1 + p^{-\frac{1}{2}} & \text{ if } p \in S.
    \end{cases}
\end{align}
\end{lemma}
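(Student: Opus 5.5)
We compute $\sigma_p$ in (\ref{eq:sigma_v}) directly, splitting $\mathbb{Q}_p=\mathbb{Z}_p\sqcup\{|z|_p>1\}$ and decomposing each piece into level sets of $\nu_p(z)$ and $\nu_p(az^2+b)$. On $\{|z|_p>1\}$ the integrand is $|z|_p^{-1}|az^2+b|_p^{-1/2}$, and $\nu_p(az^2+b)=\nu_p(a)+2\nu_p(z)$ because $\nu_p(az^2)<\nu_p(b)$ there. On $\mathbb{Z}_p$ the factor $\max(|z|_p,1)$ equals $1$. We use two standard facts: $\operatorname{vol}\{|z|_p=p^m\}=(1-p^{-1})p^m$, and the volume of $\{z\in\mathbb{Z}_p:\nu_p(az^2+b)\geq k\}$ for $p\nmid 2ab$ and $k\geq 1$, which we get from Hensel's lemma. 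The three cases of the lemma are handled separately.

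\emph{Case $p\nmid 2ab$.} Here the local Campana condition is in force, so $\Omega_p$ excludes exactly the $z$ with $\nu_p(az^2+b)=1$. Negative odd valuations cannot occur, since for $|z|_p>1$ we have $\nu_p(az^2+b)=2\nu_p(z)$.
\begin{itemize}
\item On $\{|z|_p>1\}$ the condition is automatic and the integrand is $|z|_p^{-2}$, giving $\sum_{m\geq1}(1-p^{-1})p^{m}p^{-2m}=p^{-1}$.
\item On $\mathbb{Z}_p$, write $\chi=\left(\frac{-ab}{p}\right)$. The congruence $az^2+b\equiv 0 \bmod p$ has $1+\chi$ roots, each simple, so for $k\geq1$ the set $\{\nu_p(az^2+b)\geq k\}$ has volume $(1+\chi)p^{-k}$.
\item Hence the level $k=0$ contributes $1-(1+\chi)p^{-1}$, and the levels $k\geq2$ contribute $(1+\chi)(1-p^{-1})\sum_{k\geq2}p^{-k}p^{k/2}=(1+\chi)(p^{-1}+p^{-3/2})$.
\end{itemize}
Summing the pieces gives $1+p^{-1}+(1+\chi)p^{-3/2}$.

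\emph{Case $p\mid 2ab$, $p\notin S$.} Since $S$ is the set of primes dividing $ab$, this forces $p=2$. We use $a\equiv b\equiv 1 \bmod 4$.
\begin{itemize}
\item For $z\in\mathbb{Z}_2^\times$ we have $z^2\equiv1 \bmod 8$, so $\nu_2(az^2+b)=\nu_2(a+b)=1$. These points are excluded.
\item For $z\in 2\mathbb{Z}_2$ we have $az^2+b$ a unit. This piece has volume $1/2$ and integrand $1$.
\item For $|z|_2>1$ the valuation $\nu_2(az^2+b)=2\nu_2(z)$ is even, so the condition holds. The integrand is $|z|_2^{-2}$, giving $\sum_{m\ge1}2^{-1}2^{m}2^{-2m}=1/2$.
\end{itemize}
Thus $\sigma_2=1$.

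\emph{Case $p\in S$.} There is no Campana condition, so $\Omega_p=\mathbb{Q}_p$ and only the valuations matter; $p$ is odd.
\begin{itemize}
\item If $p\mid a$: on $\mathbb{Z}_p$ the value $az^2+b$ is a unit, contributing $1$. On $|z|_p>1$ we have $|az^2+b|_p=p^{-1}|z|_p^2$, so the integrand is $p^{1/2}|z|_p^{-2}$, contributing $p^{1/2}\cdot p^{-1}=p^{-1/2}$.
\item If $p\mid b$: units $z$ give a unit value, contributing $1-p^{-1}$. On $p\mathbb{Z}_p$ we have $\nu_p(az^2+b)=1$, contributing $p^{-1}\cdot p^{1/2}$. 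On $|z|_p>1$ the value has absolute value $|z|_p^2$, contributing $p^{-1}$.
\end{itemize}
In both subcases the total is $1+p^{-1/2}$.

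The only step needing care is the first case: getting the Hensel-lemma volume count right and checking that no negative odd valuation can arise outside $\mathbb{Z}_p$. The remaining cases are bookkeeping of geometric series.
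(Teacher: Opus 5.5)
Your proposal is correct and takes essentially the same route as the paper. For $p\in S$ you split $\mathbb{Q}_p$ by the valuation of $z$, which is the paper's decomposition into $R_1,R_2,R_3$, and you arrive at the same sum $p^{-1}+p^{-1/2}+(1-p^{-1})$ in both subcases $p\mid a$ and $p\mid b$. The only difference is that you carry out the computations for $p\nmid 2ab$ (via Hensel's lemma) and for $p=2$ explicitly, whereas the paper simply cites Shute's Lemma~6.2 for the primes outside $S$.
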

\begin{proof}
Let $\mu_p$ be the $p$-adic measure. \\
For the primes $p \notin S$ we have the same results as in
\cite[Lemma~6.2]{shute}. Therefore, we consider the case $p \in S$. For these primes, we have no local Campana condition and, therefore $\Omega_p = \mathbb{Q}_p$.\\ Thus we get for (\ref{eq:sigma_v}) if $p\in S$ :
\begin{align*}
    \sigma_{p} = \ \int_{\mathbb{Q}_p}\frac{dz}{\max(|z|_v,1) \left\vert az^2+b \right\vert_v^\frac{1}{2}}. 
\end{align*}
As in the proof of
\cite[Lemma~6.2]{shute} we split $\Omega_{p}$ into the regions $R_1,R_2,R_3$ respectively by $v_p(z)\geq 1,$, $v_p(z)<0,$ and $v_p(z)=0.$
We divide in the two cases $p \mid a$ and the case $p \mid b$.
To compute the integrals we follow the proof of \cite[Lemma~6.2]{shute}. In every case we compute $\vert az^2+b \vert_p$ and $\max(\vert z \vert_p,1)$. \\
In total we obtain if $p\vert ab$ that 
$$\sigma_{p} =p^{-1} + p^{-\frac{1}{2}}+1-p^{-1} = 1+p^{-\frac{1}{2}}.$$
\end{proof}
Together with (\ref{eq:sigma_infinity}) and Lemma~\ref{eq:sigma_p} we obtain that $c_{\text{PSTV-A}}$ is equal to 
\begin{align}
    c_{\text{PSTV-A}} 
    &= \frac{2\sigma_{\infty}}{\pi^2} \prod_{p \nmid 2ab} \left( 1+ \frac{ \left( 1 + \left( \frac{-ab}{p}\right)\right)}{(1+ p^{-1})p^{\frac{3}{2}}} \right) \prod_{p \in S }\frac{1+p^{-\frac{1}{2}}}{1+p^{-1}}.
\end{align}
\begin{remark}
    Using the same ideas as in \cite[Section~6.3]{shute} we observe that if $(a,b)=(1,q)$ for $q$ a prime congruent to $1 \mod 4$ that $c_{\text{PSTV-A}}<C$. For the values $(a,b)=(2437,2269)$ we get that $c_{\text{PSTV-A}}>C.$
\end{remark}

\section{The CLT-BT conjecture }\label{sec:new-conjecture}
In this section we compute the leading constant in the asymptotic formula of $N_S(B)$ for $S$ the set of primes dividing $a$ and $b$ following the CLT-BT conjecture in \cite{newconjecture}. First, we determine 
$\operatorname{Br}_1((X, D),L)/\operatorname{Br}(\mathbb{Q})$, and then we compute the respective summands $\hat{\tau}_{U,D}(b)$ for $b\in \operatorname{Br}_1((X, D),L)/\operatorname{Br}(\mathbb{Q})$. From now on, we denote the constant $c(K,S,(\mathscr{X},\mathscr{D}),\mathscr{L})$ in Conjecture~\ref{con:new_conjecture} by $c_{\text{CLT-BT}}$.

\subsection{Determination of $\operatorname{Br}_1((X, D),L)/\operatorname{Br}(\mathbb{Q})$}

Since $a((X, D),L)$,\\ $b(F,(X,D),L)$ and $\alpha((X, D),L) $ are the same as in the PSTV-A conjecture, in our case, the constant has the following form 
\begin{align*}
c_{\text{CLT-BT}} = \frac{1}{2}\sum_{b \in \operatorname{Br}_1((X, D),L)/\operatorname{Br}(\mathbb{Q})} \hat{\tau}_{U,D}(b).
\end{align*}
In our setting we obtain $U=  \mathbb{P}^1$, $X^\circ = \mathbb{P}^1 \setminus V(ax^2+by^2)$ and thus $U \setminus X^\circ = D_1$. We deduce
\begin{align*}
\operatorname{Br}((X, D),L)&= \{\beta \in \operatorname{Br} (X^\circ): 2 \partial_{D_1}(\beta)=0 \}.
\end{align*}
Since $a,b\in \mathbb{N}$ we get that $-ab \notin \mathbb{Q}^{\times 2}$  and therefore $ax^2+by^2$ is irreducible over $\mathbb{Q}$. Thus $D_1 = V(ax^2+by^2) \subset \mathbb{P}^1$ is a closed point of degree $2$.
Since $\kappa(D_1)= \mathbb{Q}\left( \sqrt{\frac{-b}{a}}\right)= \mathbb{Q}(\sqrt{-ab}) $ we fix from now on $\theta = \sqrt{\frac{-b}{a}}$. 
We observe that $D_1$ splits into two points over $\overline{\mathbb{Q}}$. 
In homogeneous coordinates we can write them as $P_1=[i\sqrt{b}: -\sqrt{a}]$ and $P_2=[-i\sqrt{b}: -\sqrt{a}]$. 
Thus  
\begin{align*}
\overline{X^\circ} = \mathbb{P}_{\overline{\mathbb{Q}}}^1 \setminus \{P_1,P_2\} \cong \mathbb{P}_{\overline{\mathbb{Q}}}^1 \setminus \{0, \infty\} = \mathbb{G}_{m, \overline{\mathbb{Q}}}. 
\end{align*}
With $\overline{X^\circ} =\mathbb{G}_{m, \overline{\mathbb{Q}}}$ and $\operatorname{Br}(\mathbb{G}_{m, \overline{\mathbb{Q}}})=0$ we get 
\begin{align*}
\operatorname{Br}_1(X^\circ)=\ker(\operatorname{Br}(X^\circ) \to \operatorname{Br}(\overline{X^\circ}))=\operatorname{Br}(X^\circ).
\end{align*}
Since $\operatorname{Br}_1(X^\circ)=\operatorname{Br}(X^\circ)$
and $\operatorname{Br}((X,D),L)\subseteq \operatorname{Br}(X^\circ)$
we obtain that $\operatorname{Br}_1((X,D),L)
=\operatorname{Br}((X,D),L).$
And therefore $\operatorname{Br}_1((X,D),L)
/\operatorname{Br}(\mathbb{Q}) = \operatorname{Br}((X,D),L)
/\operatorname{Br}(\mathbb{Q})$. Thus we obtain 
\begin{align}\label{eq:Br_1} 
\operatorname{Br}_1((X,D),L)
/\operatorname{Br}(\mathbb{Q}) &= \operatorname{Br}((X, D),L) /\operatorname{Br}(\mathbb{Q})\\ &= \{\beta \in \operatorname{Br} (X^\circ): 2 \partial_{D_1}(\beta)=0 \}/\operatorname{Br}(\mathbb{Q}). \notag
\end{align}
\subsubsection{Sequences}
To further compute $\operatorname{Br}_1((X,D),L)
/\operatorname{Br}(\mathbb{Q})$, we look at some exact sequences.
By \cite[Chapter III, Example 2.22]{milne} we obtain the exact sequence
\begin{align}\label{seq:milne}
0\longrightarrow \operatorname{Br}(X^\circ) \longrightarrow \operatorname{Br}(\mathbb{Q}(t)) \stackrel{\oplus_{v\in X^{\circ(1)}}\partial_v}{\longrightarrow} \bigoplus_{v\in X^{\circ(1)}} H^1(\kappa(v),\mathbb{Q}/\mathbb{Z}),
\end{align}
where $v$ runs over all closed points of $X^\circ$. These are all closed points of $\mathbb{P}_{\mathbb{Q}}^1$ besides $D_1$.

\subsubsection{Remarks on the function $\partial_D$}
For every prime divisor $v \in (\mathbb{P}^1)^{(1)}$ exists a canonic map $\partial_v: \operatorname{Br}(\mathbb{Q}(t)) \to H^1(\kappa(v),\mathbb{Q}/\mathbb{Z}).$
This map measures the ramification. This means $\partial_v(\beta)=0$ if $\beta$ is unramified at $v$ and $\partial_v(\beta)\neq 0$ if $\beta$ is ramified at $v$.
Since $X^{\circ(1)}=(\mathbb{P}^1_{\mathbb{Q}})^{(1)}\setminus\{D_1\},$
it follows by (\ref{seq:milne}) that 
$\beta\in\operatorname{Br}(X^\circ)$ if and only if $\partial_v(\beta)=0$ for all $v\neq D_1.$
Note that the divisor $D_1$ does not belong to $X^\circ$, and therefore
$\partial_{D_1}$ does not occur in the above exact sequence.
Nevertheless, the residue map at $D_1$
\begin{align*}
    \partial_{D_1}:\operatorname{Br}(\mathbb{Q}(t))\longrightarrow H^1(\kappa(\theta),\mathbb{Q}/\mathbb{Z})
\end{align*}
is defined on the Brauer group of the function field.
Restricting this map along the natural inclusion $\operatorname{Br}(X^\circ)
\hookrightarrow
\operatorname{Br}(\mathbb{Q}(t)),$
we obtain a homomorphism 
\begin{align*}
    \partial_{D_1}:\operatorname{Br}(X^\circ)\longrightarrow H^1(\kappa(\theta),\mathbb{Q}/\mathbb{Z}),
\end{align*}
which is not forced to vanish by the sequence in (\ref{seq:milne}), since $D_1$
is not a point of $X^\circ$. Since $\mathbb{Q}$ is a perfect field we obtain by \cite[Theorem~1.5.2]{brauer_book}
    \begin{align*}
        0 \to \operatorname{Br}(\mathbb{Q}) \to \operatorname{Br}(\mathbb{Q}(t)) \to \bigoplus_{x\in(\mathbb{P}^1_{\mathbb{Q}})^{(1)}} H^1(\mathbb{Q}(x),\mathbb{Q}/\mathbb{Z}) \to H^1(\mathbb{Q}, \mathbb{Q} / \mathbb{Z}) \to 0.
    \end{align*}
Since only the residue at $D_1$ may be non-zero we get $\operatorname{cores}_{\kappa(\theta)/\mathbb{Q}}(\partial_{D_1}(\beta))=0$ and by the Campana condition we have $2\partial_{D_1}(\beta)=0$. This lies in the residue $H^1(\kappa(\theta),\mathbb{Z}/2\mathbb{Z})$ and we get 
\begin{align*}
\operatorname{Br}((X,D),L)/\operatorname{Br}\mathbb Q \cong \ker(\operatorname{cores}_{\kappa(\theta)/\mathbb{Q}}:H^1(\kappa(\theta),\mathbb Z /2) \to H^1(\mathbb Q, \mathbb Z/2)).
\end{align*}
Using Kummer theory we have 
$H^1(\kappa(\theta),\mathbb{Z}/2\mathbb{Z})\cong\kappa(\theta)^\times/\kappa(\theta)^{\times2},$
and the corestriction corresponds to the norm map.
Therefore we obtain together with (\ref{eq:Br_1})
\begin{align*}
    \operatorname{Br}_1((X,D),L)/\operatorname{Br}\mathbb{Q} \cong\ker\left( N_{\kappa(\theta)/\mathbb Q}: \kappa(\theta)^\times/\kappa(\theta)^{\times2} \to \mathbb{Q}^\times/\mathbb Q^{\times2}\right).
\end{align*}

\subsubsection{Quaternion algebras}
We give explicit representatives for the elements of
$\operatorname{Br}((X,D),L)/\operatorname{Br}(\mathbb Q)$.
On the affine chart $y\neq 0$, let $ t=\frac{x}{y}$ and $ K=\mathbb Q(\theta).$
Then $\mathbb Q(X^\circ)=\mathbb Q(t)$ and
$\kappa(\theta)=K$.
\begin{proposition}
Let $ m=r+s\theta\in K^\times,
 r,s\in\mathbb Q,$
and assume that $ N_{K/\mathbb Q}(m)=r^2+\frac ba s^2
    \in\mathbb Q^{\times2}.$ Then $ \beta_m \coloneqq  \left(t^2+\frac ba,r+st\right) $ defines an element of $\operatorname{Br}((X,D),L)$ and satisfies $\partial_{D_1}(\beta_m) = [m] \in K^\times/K^{\times2}.$ Moreover, every class in $\operatorname{Br}((X,D),L)/ \operatorname{Br}(\mathbb Q)$ is represented by a quaternion algebra of this form. 
\end{proposition}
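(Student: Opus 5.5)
The plan is to compute the residues of the quaternion algebra $\beta_m=(f,g)$, with $f=t^2+\frac ba$ and $g=r+st$, at every closed point of $\mathbb{P}^1_{\mathbb{Q}}$. For this we use the tame symbol formula
\[
\partial_v(f,g)=\Bigl[(-1)^{v(f)v(g)}\frac{f^{v(g)}}{g^{v(f)}}\Bigr]\in \kappa(v)^\times/\kappa(v)^{\times 2}\cong H^1(\kappa(v),\mathbb Z/2).
\]
Note that $f$ is, up to the constant $a$, the dehomogenised form of $ax^2+by^2$, so $\operatorname{div}(f)=D_1-2\infty$. The divisor of $g$ is supported on the single rational point $t=-r/s$ (when $s\neq 0$) and on $\infty$.

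\textbf{Residues away from $D_1$.} Consider first a closed point $v\notin\{D_1,\infty,-r/s\}$. There $f$ and $g$ are both units, so $\partial_v(\beta_m)=0$.

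At $v=-r/s$ we have $v(f)=0$ and $v(g)=1$. The residue is the class of $f(-r/s)=\frac{r^2}{s^2}+\frac ba=\frac{N_{K/\mathbb Q}(m)}{s^2}$. This is a square in $\mathbb Q$ by hypothesis, so the residue vanishes.

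At $\infty$, use the uniformiser $u=1/t$. Then $v(f)=-2$ and $v(g)=-1$ (if $s\neq0$). The expression $f^{v(g)}g^{-v(f)}$ becomes
\[
g^2/f=(r+st)^2/(t^2+b/a).
\]
Its value at $u=0$ is $s^2$, a square, so this residue also vanishes. If $s=0$, then $m=r$ and $\beta_m=(f,r)$. The residue at $\infty$ is then $r^{-2}$, trivially a square, and the only possible nonzero residue sits at $D_1$.

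By the exact sequence (\ref{seq:milne}), these computations show $\beta_m\in\operatorname{Br}(X^\circ)$.

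\textbf{Residue at $D_1$.} Here $v(f)=1$ and $v(g)=0$, so $\partial_{D_1}(\beta_m)$ is the class of $g$ evaluated at the point $t=\theta$ in $\kappa(D_1)=K$, namely $r+s\theta=m$, up to the sign convention of the tame symbol. It is $2$-torsion, since quaternion algebras are $2$-torsion. The Campana condition $2\partial_{D_1}=0$ therefore holds automatically, and $\beta_m\in\operatorname{Br}((X,D),L)$. Some care is needed with the orientation ($t=\theta$ versus $-\theta$, and $f$ versus $af$). These only change $m$ by its conjugate, or by a norm-trivial class, in $K^\times/K^{\times2}$. Since $\bar m\equiv N(m)/m\equiv m$ modulo squares when $N(m)$ is a square, this does not affect the claim.

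\textbf{Surjectivity.} Use the isomorphism established above,
\[
\operatorname{Br}((X,D),L)/\operatorname{Br}\mathbb Q\;\cong\;\ker\bigl(N_{K/\mathbb Q}:K^\times/K^{\times2}\to\mathbb Q^\times/\mathbb Q^{\times2}\bigr),
\]
which is given by $\partial_{D_1}$; its injectivity comes from the Faddeev sequence. Any class in the kernel has a representative $m=r+s\theta$ with $N(m)\in\mathbb Q^{\times2}$, and $\beta_m$ maps to $[m]$. Hence every class is $\beta_m$ modulo $\operatorname{Br}\mathbb Q$.

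The main obstacle is the careful residue computation at $\infty$ and at $D_1$, in particular the sign and normalisation conventions of the tame symbol and the identification of $\kappa(D_1)$ with $K$ via $t\mapsto\theta$. I would handle this by working in local coordinates explicitly and then checking that the ambiguities are absorbed by squares.
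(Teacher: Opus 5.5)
Your proposal is correct and follows the paper's proof essentially step by step. Like the paper, you compute tame-symbol residues at $D_1$, at $t=-r/s$ (where the norm hypothesis kills the residue), at $\infty$ (giving $s^2$), and observe they vanish elsewhere; you then use the Faddeev-based identification of $\operatorname{Br}((X,D),L)/\operatorname{Br}\mathbb{Q}$ with the norm kernel to get surjectivity. Your remark that the choice $t\mapsto\pm\theta$ is harmless, because $[\bar m]=[m]$ when $N(m)$ is a square, makes explicit a point the paper leaves implicit. For $s=0$ the residue at $\infty$ is $r^{2}$ rather than $r^{-2}$, which is immaterial since both are squares.
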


\begin{proof}
For a quaternion algebra $(f,g)$, recall that
\begin{align*}
        \partial_v(f,g)  =(-1)^{v(f)v(g)} \frac{f^{v(g)}}{g^{v(f)}} \pmod{\kappa(v)^{\times2}}.
\end{align*}
At $D_1$ the function $t^2+\frac ba$ has a valuation one, while
$r+st$ is a unit. Since $t=\theta$ in
$\kappa(\theta)=K$, we obtain $\partial_{D_1}(\beta_m)= (r+s\theta)^{-1} = [m] \text{ in }K^\times/K^{\times2}.$
Suppose first that $s\neq0$. At the divisor $r+st=0$,
\begin{align*}
\partial_{r+st}(\beta_u)=\left. \left(t^2+\frac ba\right)\right|_{t=-r/s} = \frac{r^2+\frac ba s^2}{s^2},
\end{align*}
which is trivial in $\mathbb Q^\times/\mathbb Q^{\times2}$ by the
assumption on $N_{K/\mathbb Q}(m)$. If $s=0$, there is no such divisor.
At infinity we have $ v_\infty\left(t^2+\frac ba\right)=-2$ and $ v_\infty(r+st)=-1$,
when $s\neq0$, and the residue is represented by $s^2$, hence is
trivial. The case $s=0$ is immediate. All remaining residues vanish.
Thus $ \beta_m\in\operatorname{Br}(X^\circ)$
and, since $\partial_{D_1}(\beta_m)$ is $2$-torsion, $2\partial_{D_1}(\beta_m)=0.$
Hence $    \beta_m\in\operatorname{Br}((X,D),L).$
Finally, the residue map identifies
$\operatorname{Br}((X,D),L)/\operatorname{Br}(\mathbb Q)$
with the above norm kernel. Since $\partial_{D_1}(\beta_m)=[m]$,
every element of this kernel is represented by some $\beta_m$.
\end{proof}

Consequently, the conjectural leading constant becomes
\begin{align}\label{eq:new_constant_1}
c_{\text{CLT-BT}}=\frac{1}{2} \sum_{\substack{[m]\in K^\times/K^{\times2}\\
N_{K/\mathbb Q}(m)\in\mathbb Q^{\times2}}}\hat{\tau}_{U,D}(\beta_m),
\end{align}
where, writing $m=r+s\theta$,
\begin{align*}
\beta_m=\left(t^2+\frac ba,r+st\right).
\end{align*}
In particular, for $t_v\in X^\circ(\mathbb Q_v)_\epsilon$, the corresponding
local Brauer factor is
\begin{align*}
e^{2\pi i\operatorname{inv}_v(\beta_u(t_v))}=\left(t_v^2+\frac{b}{a},r+st_v\right)_v,
\end{align*}
where $(\cdot,\cdot)_v$ denotes the Hilbert symbol over $\mathbb Q_v$.

\subsubsection{New Parametrization}\label{sec:new_parametrization}
The elements $\theta^2$ and $-ab$ determine the same class in
$\mathbb{Q}^\times/\mathbb{Q}^{\times 2}$, since $ -\frac{b}{a}=(-ab)\left(\frac{1}{a}\right)^2$.
Define
\begin{align*}
M\coloneqq \left\{[m]\in K^\times/K^{\times 2}:N_{K/\mathbb{Q}}(m)\in\mathbb{Q}^{\times 2}\right\}. 
\end{align*}
$M$ is the index set of the sum in (\ref{eq:new_constant_1}). We obtain the following lemma. 
\begin{lemma}\label{lemma:M}
Every class $[m]\in M$ admits a representative in
$\mathbb{Q}^\times$.
\end{lemma}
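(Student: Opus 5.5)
We want to show that every class $[m]\in M$ has a representative in $\mathbb{Q}^\times$. The approach is Hilbert's Theorem~90 for the quadratic extension $K/\mathbb{Q}$, where $K=\mathbb{Q}(\theta)=\mathbb{Q}(\sqrt{-ab})$. Write $\sigma$ for the nontrivial automorphism of $K$, so that $\sigma(\theta)=-\theta$, and take $m\in K^\times$ with $N_{K/\mathbb{Q}}(m)=m\,\sigma(m)=c^2$ for some $c\in\mathbb{Q}^\times$. Since $K$ is a genuine quadratic field (as observed above, $-ab\notin\mathbb{Q}^{\times2}$), the Galois group is cyclic of order $2$ and Hilbert~90 applies.

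\textbf{Step 1: a norm-one element.} Put $w\coloneqq m/c\in K^\times$. Then
\begin{align*}
N_{K/\mathbb{Q}}(w)=\frac{N_{K/\mathbb{Q}}(m)}{c^2}=1.
\end{align*}
By Hilbert~90 there is $z\in K^\times$ with $w=z/\sigma(z)$.

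\textbf{Step 2: rewrite $m$.} Multiply numerator and denominator by $z$:
\begin{align*}
m=c\,\frac{z}{\sigma(z)}=c\,\frac{z^2}{z\,\sigma(z)}=\frac{c}{N_{K/\mathbb{Q}}(z)}\,z^2 .
\end{align*}
Here $q\coloneqq c/N_{K/\mathbb{Q}}(z)$ lies in $\mathbb{Q}^\times$, so $[m]=[q]$ in $K^\times/K^{\times2}$. Thus the class is represented by the rational number $q$, which proves the lemma.

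If one prefers to avoid citing Hilbert~90, the element $z$ can be written down explicitly: take $z=1+w$ when $w\neq -1$, and $z=\theta$ when $w=-1$. In the first case $\sigma(z)\,w=\sigma(w)w+w=1+w=z$, using $\sigma(w)=w^{-1}$. In the second case $z/\sigma(z)=-1=w$.

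\textbf{A useful consequence.} The representative $q$ is unique only up to the kernel of $\mathbb{Q}^\times/\mathbb{Q}^{\times2}\to K^\times/K^{\times2}$, which is $\{1,-ab\}$ modulo squares. So $M$ is identified with $\mathbb{Q}^\times/\langle \mathbb{Q}^{\times 2},-ab\rangle$; note that every rational $q$ automatically has square norm $q^2$. This description is presumably what the later computation of $\hat\tau_{U,D}$ will use.

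There is no real obstacle in this argument. The only points needing care are that $K/\mathbb{Q}$ is a genuine quadratic extension, and the degenerate case $w=-1$ in the explicit version of Hilbert~90.
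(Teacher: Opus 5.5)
Your proposal is correct and follows essentially the same route as the paper. Both divide $m$ by a square root of its norm to get a norm-one element, apply Hilbert~90 to write it as $z/\sigma(z)=z^2/N_{K/\mathbb{Q}}(z)$, and conclude $[m]=[c/N_{K/\mathbb{Q}}(z)]$. Your explicit choice of $z$ (namely $1+w$, or $\theta$ when $w=-1$) is a nice addition that makes the step the paper only asserts self-contained.
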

\begin{proof}
Let $[m]\in M$ with $m \in K^{\times}$. 
Write $u$ as $m=m_1 + \theta m_2$ with $m_1,m_2 \in \mathbb{Q}$.
By definition, there exists
$q\in\mathbb{Q}^\times$ such that $N_{K/\mathbb{Q}}(m)=q^2.$
Set $v=\frac{m}{q}\in K^\times.$
Then $N_{K/\mathbb{Q}}(v) =1$. 
Furthermore, consider the function 
$\phi \in \operatorname{Gal}(K/\mathbb{Q}), \phi: K \to K$, $m_1+\theta m_2 \mapsto m_1 - \theta m_2$. With this function observe $v\phi(v) =1 $ and so $\phi(v)= \frac{1}{v}.$
For every $v$ there exists a $z\in K^\times$ such that $v = \frac{z}{\phi(z)}$.
Then since $N_{K/\mathbb{Q}}(z)= z\phi(z)$\
we get $ v = \frac{z^2}{N_{K/\mathbb{Q}}(z)}.$
Since $u =qv$ we conclude
\begin{align*}
    [m]= \left[ \frac{q}{N_{K/\mathbb{Q}}(z)}  \right].
\end{align*}
\end{proof}
Define the map $\psi: \mathbb{Q}^\times/\mathbb{Q}^{\times 2} \to M, [m] \mapsto [m] $. By Lemma~\ref{lemma:M} the map $\psi$ is surjective. 

\begin{lemma}\label{lemma:square}
Let $c\in\mathbb{Q}^\times$. Then $c$ is a square in $K^\times$ if
and only if $c\in\mathbb{Q}^{\times 2}$ or $c\in(-ab)\mathbb{Q}^{\times 2}.$
\end{lemma}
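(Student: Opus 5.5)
The plan is to compute directly in the quadratic field $K=\mathbb{Q}(\theta)=\mathbb{Q}(\sqrt{-ab})$, using the basis $\{1,\sqrt{-ab}\}$. First I would record that $-ab$ is not a rational square. Indeed $a,b\in\mathbb{N}$, so $-ab<0$, and hence $[K:\mathbb{Q}]=2$. Since $\theta=\sqrt{-b/a}$ and $\sqrt{-ab}=a\theta$, we also have $\sqrt{-ab}\in K$.

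The \emph{if} direction is immediate. If $c=d^2$ with $d\in\mathbb{Q}^\times$, then $c$ is already a square in $K^\times$. If instead $c=-ab\,d^2$, then $c=(d\sqrt{-ab})^2$ with $d\sqrt{-ab}\in K^\times$.

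For the \emph{only if} direction, suppose $c=w^2$ with $w=x+y\sqrt{-ab}\in K^\times$ and $x,y\in\mathbb{Q}$. Expanding gives
\begin{align*}
c = x^2-ab\,y^2+2xy\sqrt{-ab}.
\end{align*}
Since $c\in\mathbb{Q}$ and $1,\sqrt{-ab}$ are linearly independent over $\mathbb{Q}$, we get $xy=0$. If $y=0$, then $c=x^2$ with $x\neq0$, so $c\in\mathbb{Q}^{\times2}$. If $x=0$, then $c=-ab\,y^2$ with $y\neq0$, so $c\in(-ab)\mathbb{Q}^{\times2}$. This proves the lemma.

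An alternative route is Galois-theoretic. If $c=w^2$ with $c\in\mathbb{Q}$, then $\phi(w)=\pm w$ for the nontrivial automorphism $\phi$ used in Lemma~\ref{lemma:M}. Hence $w\in\mathbb{Q}$ or $w\in\mathbb{Q}\sqrt{-ab}$, which gives the same dichotomy.

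There is no real obstacle here. The only point needing care is the linear independence of $1$ and $\sqrt{-ab}$, which rests on $-ab\notin\mathbb{Q}^{\times2}$; this was already noted in the paper when establishing that $ax^2+by^2$ is irreducible.

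This lemma then shows that the kernel of the surjection $\psi$ is $\{1,[-ab]\}$. Consequently $M\cong(\mathbb{Q}^\times/\mathbb{Q}^{\times2})/\langle[-ab]\rangle$, which is the form needed for the later evaluation of the sum~(\ref{eq:new_constant_1}).
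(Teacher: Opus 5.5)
Your proof is correct and is essentially the paper's argument: the paper likewise writes a square as $(c_1+c_2\theta)^2=\left(c_1^2-\frac{b}{a}c_2^2\right)+2c_1c_2\theta$ and compares coefficients, working in the basis $\{1,\theta\}$ rather than your $\{1,\sqrt{-ab}\}$ (these differ only by the factor $a$). Your version spells out the steps the paper leaves implicit, namely that $[K:\mathbb{Q}]=2$ forces $c_1c_2=0$, and that the two resulting cases give $c\in\mathbb{Q}^{\times2}$ or $c\in(-ab)\mathbb{Q}^{\times2}$.
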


\begin{proof} 
If $c \in K^{\times 2}$, we can write $c$ as $\left(c_1^2 -\frac{b}{a}c_2^2 \right) +2c_1c_2\theta$ with $c_1,c_2 \in \mathbb{Q}$, not both zero. 
For the other direction assume $c\in (-ab)\mathbb{Q}^{\times 2}$. Then there exists $c_3 \in \mathbb{Q}$ such that $c=(ac_3\theta)^2 \in K^{\times 2}$ 
\end{proof}
By Lemma~\ref{lemma:square} we get that $M \cong \mathbb{Q}^{\times}/(\mathbb{Q}^{2\times} \cup (-ab)\mathbb{Q}^{\times 2}).$
For every rational number $c$ there exists a unique squarefree integer $\operatorname{sf}(c)$ such that $c = \operatorname{sf}(c) d^2 $ with $d \in \mathbb{Q}$. 
Then $\operatorname{sf}(c)$ is unique and has the same sign as $c$. Furthermore, we observe $[c]=[\operatorname{sf}(c)] \in \mathbb{Q}^{\times }/\mathbb{Q}^{\times2}$.
Define 
\begin{align*}
\mathbb{Z}^\times_{free}= \{z \in \mathbb{Z}^\times: z \text{ is squarefree}\}.
\end{align*}
We consider the map
$$I:\mathbb{Z}^\times_{\mathrm{free}}\longrightarrow \mathbb {Z}^\times_{\mathrm{free}}, \quad
z\longmapsto \operatorname{sf}(-abz).$$
This map is a fixed-point-free involution. Hence every orbit of the action generated by $I$ has cardinality two and is of the form
$\mathcal O(z)=\{z,I(z)\}.$
Moreover, for any $z,z'\in\mathbb{Z}^\times_{\mathrm{free}}$, the corresponding orbits are either equal or disjoint; that is, $\mathcal O(z)=\mathcal O(z')$ or 
$\mathcal O(z)\cap\mathcal O(z')=\varnothing.$
Consequently, the set $\mathbb{Z}^\times_{\mathrm{free}}$ admits a partition into two-element $I$-orbits.
We now choose a set of representatives $Z\subseteq \mathbb{Z}^\times_{\mathrm{free}}$
containing exactly one element from each $I$-orbit. Equivalently, $Z$ is chosen such that
$$\mathbb {Z}^\times_{\mathrm{free}}=\bigsqcup_{z\in Z}\{z,I(z)\}.$$
In particular $Z\cap I(Z)=\varnothing$ and $Z\cup I(Z)=\mathbb{Z}^\times_{\mathrm{free}}.$
Observe that $Z$ is not unique, there are infinity many possibilities to choose $Z$. The next results hold for every choice of $Z$ and later we choose a specific $Z$.
\begin{proposition}\label{prop:bijection}
The map $F:Z \to M, z \mapsto [z]$
is bijective.
\end{proposition}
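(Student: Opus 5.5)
We will prove the statement by checking that $F$ is well defined, injective and surjective, using Lemma~\ref{lemma:M} and Lemma~\ref{lemma:square}.

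\emph{Well defined.} For $z\in Z\subseteq\mathbb{Z}^\times_{\mathrm{free}}$ we have $N_{K/\mathbb{Q}}(z)=z^2\in\mathbb{Q}^{\times 2}$. Hence the class $[z]\in K^\times/K^{\times2}$ lies in $M$, and $F$ is well defined.

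\emph{Surjective.} Let $[m]\in M$. By Lemma~\ref{lemma:M} we may choose a representative $c\in\mathbb{Q}^\times$, so $[m]=[c]$ in $K^\times/K^{\times 2}$. Write $c=\operatorname{sf}(c)d^2$ with $d\in\mathbb{Q}^\times$. Since $d^2\in K^{\times2}$, we get $[m]=[\operatorname{sf}(c)]$ with $\operatorname{sf}(c)\in\mathbb{Z}^\times_{\mathrm{free}}$. Because $\mathbb{Z}^\times_{\mathrm{free}}=Z\sqcup I(Z)$, either $\operatorname{sf}(c)\in Z$ and we are done, or $\operatorname{sf}(c)=I(z)$ for some $z\in Z$. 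In the second case $I(z)=\operatorname{sf}(-abz)$, so $I(z)=-abz\,e^2$ with $e\in\mathbb{Q}^\times$. Moreover $-ab\in K^{\times2}$, since $-ab=(a\theta)^2$ (this is Lemma~\ref{lemma:square} applied to $c=-ab$). Hence $[I(z)]=[z]$ in $K^\times/K^{\times2}$, so $[m]=F(z)$.

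\emph{Injective.} Suppose $z,z'\in Z$ with $[z]=[z']$ in $K^\times/K^{\times2}$. Then $z/z'\in\mathbb{Q}^\times\cap K^{\times2}$. By Lemma~\ref{lemma:square}, either $z/z'\in\mathbb{Q}^{\times2}$ or $z/z'\in(-ab)\mathbb{Q}^{\times2}$.
\begin{itemize}
\item[(i)] If $z/z'\in\mathbb{Q}^{\times2}$, then $z$ and $z'$ are squarefree integers (signs included) differing by a rational square. By uniqueness of $\operatorname{sf}$, $z=z'$.
\item[(ii)] If $z/z'\in(-ab)\mathbb{Q}^{\times2}$, then $z=-abz'e^2$ for some $e\in\mathbb{Q}^\times$. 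Uniqueness of $\operatorname{sf}$ gives $z=\operatorname{sf}(-abz')=I(z')$. So $z$ and $z'$ lie in the same $I$-orbit. Since $Z$ contains exactly one element of each orbit, $z=z'$; this is also consistent with $Z\cap I(Z)=\varnothing$.
\end{itemize}
Thus $F$ is injective.

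\emph{Main point to check.} The step needing care is the unique-representative property of $\operatorname{sf}$ on $\mathbb{Q}^\times/\mathbb{Q}^{\times2}$, including signs. It uses only that two squarefree integers differing by a rational square are equal, which is the fact the paper states just before the proposition. Lemma~\ref{lemma:square} also needs its forward direction: a rational $c$ that is a square in $K$ lies in $\mathbb{Q}^{\times2}\cup(-ab)\mathbb{Q}^{\times2}$. To justify it, write $c=(c_1+c_2\theta)^2$. Comparing $\theta$-coefficients gives $c_1c_2=0$. If $c_2=0$ then $c=c_1^2$; if $c_1=0$ then $c=c_2^2\theta^2=-\tfrac{b}{a}c_2^2\in(-ab)\mathbb{Q}^{\times2}$. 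With this, the argument is complete.
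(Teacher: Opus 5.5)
Your proof is correct and follows the paper's argument: surjectivity via Lemma~\ref{lemma:M}, squarefree parts and the decomposition $Z\sqcup I(Z)$, and injectivity via Lemma~\ref{lemma:square} together with the one-representative-per-orbit property of $Z$. Your extra pieces also repair small slips in the paper without changing the route: the well-definedness check, the explicit identity $-ab=(a\theta)^2$ (the paper writes $\theta^2=-ab$, which holds only up to the square $a^2$), and your proof of the forward direction of Lemma~\ref{lemma:square}, which the paper leaves unfinished.
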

\begin{proof}
We prove surjectivity and injectivity separately.
For surjectivity, let $[m]\in M$. By definition of $M$, there exists $\tilde{m}\in\mathbb{Q}^\times$ such that $[m]=[\tilde{m}].$
Writing $\tilde{m}=\operatorname{sf}(\tilde{m})\hat{m}^2$ with 
$\hat{m}\in\mathbb{Q}^\times$
we obtain $[m]=[\operatorname{sf}(\tilde{m})],$
where $\operatorname{sf}(\tilde{m})\in\mathbb{Z}^\times_{\mathrm{free}}$. By the choice of $Z$, either $\operatorname{sf}(\tilde{m})\in Z$ or $I(\operatorname{sf}(\tilde{m}))\in Z$. In the first case, $F(\operatorname{sf}(\tilde{m}))=[m]$. In the second case, using $I(\operatorname{sf}(\tilde{m}))=C\operatorname{sf}(-ab\operatorname{sf}(\tilde{m}))$
and $\theta^2=-ab$, we have in $K^\times/K^{\times 2}$
$$[I(\operatorname{sf}(\tilde{m}))]
=[-ab\operatorname{sf}(\tilde{m})]
=[\theta^2\operatorname{sf}(\tilde{m})]
=[\operatorname{sf}(\tilde{m})]
=[m].$$
Thus $F$ is surjective.
For injectivity, suppose that $z_1,z_2\in Z$ satisfy $F(z_1)=F(z_2).$
Then $z_1/z_2\in K^{\times 2}$. By Lemma~\ref{lemma:square}, either
$\frac{z_1}{z_2}\in\mathbb{Q}^{\times 2}$ or $\frac{z_1}{z_2}\in(-ab)\mathbb{Q}^{\times 2}.$
In the first case, $z_1$ and $z_2$ represent the same class in
$\mathbb{Q}^\times/\mathbb{Q}^{\times 2}$. Since both are squarefree integer representatives, uniqueness gives $z_1=z_2$.
In the second case, $[z_1]=[-abz_2]$ in $\mathbb{Q}^\times/\mathbb{Q}^{\times 2}.$
Hence by uniqueness of squarefree representatives we get $z_1=\operatorname{sf}(-abz_2)=I(z_2).$
This is impossible, since $Z$ contains exactly one representative from each $I$-orbit. Therefore only the first case can occur, and $z_1=z_2$.
Hence $F$ is bijective.
\end{proof}
With Proposition~\ref{prop:bijection} we obtain 
\begin{align}\label{eq:new_conj_sum}
 c_{\text{CLT-BT}} = \frac{1}{2}\sum_{z\in Z}\hat{\tau}_{U,D}(\beta_z).
\end{align}

\subsection{Computation of $\hat{\tau}_{U,D}(\beta_u)$}
Recall the definition of $\hat{\tau}_{U,D}(\beta_u)$ in (\ref{def:tau}).
Since $U=\mathbb{P}^1$, we have $\operatorname{Pic}(\overline U)=\operatorname{Pic}(\mathbb{P}^1_{\overline{\mathbb{Q}}})\cong\mathbb{Z}.$
Hence $L(\operatorname{Pic}(\overline{U}),s)=\zeta_{\mathbb{Q}}(s)$ and $L^*(\operatorname{Pic}(\overline{U}),1)=1.$
Moreover, for every finite prime $p$ we have $\lambda_p=
L_p(\operatorname{Pic}(\overline{U}),1)=(1-p^{-1})^{-1}$ and thus $\lambda_p^{-1}=1-p^{-1}.$ At the infinite place we obtain $\lambda_\infty=1$.
It remains to simplify the height appearing in the denominator.
Let $H$ denote the class of a divisor of degree one on $\mathbb{P}^1$.
Then $L\sim H,K_X\sim-2H,D_1\sim2H$
and therefore $D=\frac12D_1\sim_{\mathbb{Q}}H.$
Since $a((X,D),L)=1$, it follows that
\begin{align*}
    a((X,D),L)L+K_X+D\sim_{\mathbb{Q}}H-2H+H\sim_{\mathbb{Q}}0.
\end{align*}
With the chosen compatible adelic metrics, the corresponding local
height is therefore trivial:
\begin{align*}
    H_v(a((X,D),L)L+K_X+D,t_v)=1.
\end{align*}

\subsubsection{The Hilbert Symbols}

In order to compute the local densities, we first simplify the evaluation of the Brauer classes at local points. More precisely, we show that the Hilbert symbol associated with a class $m\in K^\times$ depends only on a rational representative of its square class.

\begin{proposition}
Let $m=r+s\theta\in K^\times$, where $r,s\in\mathbb{Q}$, and suppose that $q\in\mathbb{Q}^\times$ satisfies
$[m]=[q]$ in $K^\times/K^{\times 2}$. Then 
$\beta_m=\left(x^2+\frac ba,q\right)$ is in $\operatorname{Br}(X^\circ).$
In particular, for every place $v\in\Omega_{\mathbb {Q}}$ and every
$x_v\in X^\circ(\mathbb{Q}_v)$ we have $\beta_m(x_v)=\left(x_v^2+\frac ba,q\right)_v.$
At every point for which $r+sx_v\neq0$, this agrees with the original presentation:
$$\left(x_v^2+\frac ba,r+sx_v\right)_v=\left(x_v^2+\frac ba,q\right)_v.$$
\end{proposition}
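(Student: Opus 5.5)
The plan is to compare the two quaternion algebras in the Brauer group of the function field. This reduces everything to a single algebra whose second entry is a constant. By bilinearity of quaternion symbols,
\[
\left(x^2+\tfrac ba,\,r+sx\right)=\left(x^2+\tfrac ba,\,q\right)+\left(x^2+\tfrac ba,\,\tfrac{r+sx}{q}\right)\quad\text{in }\operatorname{Br}(\mathbb{Q}(x)).
\]
So it suffices to show two things. First, $\gamma\coloneqq\left(x^2+\tfrac ba,q\right)$ lies in $\operatorname{Br}(X^\circ)$. Second, the difference $\delta\coloneqq\left(x^2+\tfrac ba,\tfrac{r+sx}{q}\right)$ lies in $\operatorname{Br}(X^\circ)$, has vanishing residue at $D_1$, and hence is a constant class.

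For $\gamma$, I will use the residue formula from the proof of the preceding proposition. Since $q$ is a constant, the only divisors where $\gamma$ can ramify are the zeros and poles of $x^2+\tfrac ba$, namely $D_1$ and $\infty$. At $\infty$ the valuation of $x^2+\tfrac ba$ is $-2$, which is even, so the residue is $q^{\pm2}$, a square. By the exact sequence (\ref{seq:milne}), $\gamma\in\operatorname{Br}(X^\circ)$. Its residue at $D_1$ is $[q]\in K^\times/K^{\times2}$.

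For $\delta$, the previous proposition shows that $\left(x^2+\tfrac ba,r+sx\right)$ is unramified away from $D_1$, with residue $[m]$ at $D_1$. Hence $\delta$ is unramified away from $D_1$, and its residue there is $[m][q]^{-1}$, which is trivial by hypothesis. So $\delta$ is unramified on all of $\mathbb{P}^1$. By the Faddeev sequence (\cite[Theorem~1.5.2]{brauer_book}) it then comes from $\operatorname{Br}(\mathbb{Q})$. The one point to check here is that the Faddeev sequence really forces $\delta$ to be constant. This holds because exactness at $\operatorname{Br}(\mathbb{Q}(t))$ identifies the classes with all residues zero with the image of $\operatorname{Br}(\mathbb{Q})$. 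Thus $\beta_m=\gamma+\delta_0$ with $\delta_0\in\operatorname{Br}(\mathbb{Q})$, so $\gamma$ represents the same class in $\operatorname{Br}((X,D),L)/\operatorname{Br}(\mathbb{Q})$. This justifies writing $\beta_m=\left(x^2+\tfrac ba,q\right)$.

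For the local statement, evaluation at $x_v\in X^\circ(\mathbb{Q}_v)$ is compatible with the symbol: when both entries are nonzero at $x_v$, specialising the algebra gives the Hilbert symbol $\left(x_v^2+\tfrac ba,q\right)_v$.

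The main obstacle is the last identity. It is an equality of Hilbert symbols pointwise, not only up to the constant $\delta_0(x_v)=\operatorname{inv}_v(\delta_0)$. So I must show that the constant $\delta_0$ is actually trivial. My approach is to evaluate at a convenient rational point. At $x=\infty$ both algebras specialise with leading coefficients, namely $(1,s)$ versus $(1,q)$, and each of these is trivial. This forces $\delta_0=0$ globally, and then the pointwise identity holds for every $x_v$ with $r+sx_v\neq0$. To make this rigorous, I would substitute $x=1/u$ and rescale each entry by a square before evaluating at $u=0$. If $s=0$, then $m=r$ and $[r]=[q]$ as elements of $\mathbb{Q}^\times$ modulo $K^{\times2}$. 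If $r/q\in(-ab)\mathbb{Q}^{\times2}$, I additionally use that $\left(x^2+\tfrac ba,-\tfrac ba\right)$ is trivial, since $x^2+\tfrac ba$ is a norm from $\mathbb{Q}(\sqrt{-b/a})$.
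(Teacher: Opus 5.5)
Your proof is correct, but it takes a different route from the paper.

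\textbf{The paper's route.} The paper does not use residues here. It writes $m=qk^2$ with $k=k_1+k_2\theta$ and compares coefficients to get the polynomial identity
\[
r+sx=q\left((k_1+k_2x)^2-k_2^2\left(x^2+\frac ba\right)\right).
\]
So $r+sx$ is $q$ times an explicit norm from $\mathbb{Q}(x)\bigl(\sqrt{x^2+b/a}\bigr)$. Hence $\left(x^2+\frac ba,r+sx\right)=\left(x^2+\frac ba,q\right)$ holds exactly in $\operatorname{Br}(\mathbb{Q}(x))$, with no constant left to determine. The pointwise Hilbert-symbol identity is then immediate from the explicit norm witness.

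\textbf{Your route.} You use the residues and the Faddeev sequence to show the two classes differ by some $\delta_0\in\operatorname{Br}(\mathbb{Q})$. You then kill $\delta_0$ by evaluating at $x=\infty$. This is more conceptual: it needs only the residue at $D_1$, and it would still work when an explicit norm witness is awkward to find. The cost is extra machinery: the previous proposition, exactness of the Faddeev sequence, and the injectivity of the Brauer group of an open subset into $\operatorname{Br}(\mathbb{Q}(x))$. You need that injectivity to pass from the generic identity to the identity of Hilbert symbols at each $x_v$. The paper's explicit norm makes all of this unnecessary.

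\textbf{Two points to tidy.}
\begin{enumerate}
\item When you quote the previous proposition for $\left(x^2+\frac ba,r+sx\right)$, note that its hypothesis $N_{K/\mathbb{Q}}(m)\in\mathbb{Q}^{\times2}$ holds. This follows from $m=qk^2$, which gives $N_{K/\mathbb{Q}}(m)=q^2N_{K/\mathbb{Q}}(k)^2$.
\item At $x=\infty$, the description ``$(1,s)$ versus $(1,q)$'' is not accurate. For $s\neq0$, $r+sx$ has a simple pole at $\infty$, so it cannot be rescaled by a square into a unit. The correct reason is as follows. With $x=1/u$ one has $x^2+\frac ba=u^{-2}\left(1+\frac ba u^2\right)$, which is a square in $\mathbb{Q}((u))$ by Hensel's lemma. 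So $\delta$ vanishes in $\operatorname{Br}(\mathbb{Q}((u)))$. Since $\operatorname{Br}(\mathbb{Q})\to\operatorname{Br}(\mathbb{Q}((u)))$ is injective, this forces $\delta_0=0$. This argument covers $s=0$ as well, so your separate treatment of that case is unnecessary.
\end{enumerate}
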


\begin{proof}
Since $[m]=[q]$ in $K^\times/K^{\times2}$, there exists
$k=k_1+k_2\theta\in K^\times$, with $k_1,k_2\in\mathbb {Q}$, such that
$m=qk^2.$ With $\theta^2=-b/a$ and if we compare the coefficients we get
$r=q\left(k_1^2-\frac ba k_2^2\right),
$ and $s=2qk_1k_2.$
Thus 
\begin{align*}
r+sx=q\left(k_1^2-\frac ba k_2^2+2k_1k_2x\right)=q\left((k_1+k_2x)^2-k_2^2\left(x^2+\frac ba\right)\right).
\end{align*}
Now let $A=x^2+\frac ba.$
The second factor is a norm from the quadratic algebra
$\mathbb{Q}(x)(\sqrt A)/\mathbb{Q}(x),$ since
$(k_1+k_2x)^2-k_2^2A=N_{\mathbb{Q}(x)(\sqrt A)/\mathbb{Q}(x)}\bigl((k_1+k_2x)+k_2\sqrt A\bigr)$.
Therefore, $\left(A,(k_1+k_2x)^2-k_2^2A\right)=0$ in the Brauer group, and therefore
$\left(A,r+sx\right) =\left(A,q\right)$
wherever the former presentation is defined. Thus the two quaternion algebras define the same class over the function field of $X^\circ$.
The presentation $(A,q)$ is defined at every point of $X^\circ$. In particular, if $s\neq0$, it provides an extension of the original presentation across the point $x=-r/s$, where $r+sx$ vanishes. Hence
$\beta_m=\left(x^2+\frac ba,q\right)
\text{ in }\operatorname{Br}(X^\circ).$
At $x_v\in X^\circ(\mathbb{Q}_v)$ we get 
$\beta_m(x_v)=\left(x_v^2+\frac ba,q\right)_v,$
and, if $r+sx_v\neq0$ we obtain 
$\left(x_v^2+\frac ba,r+sx_v\right)_v=
\left(x_v^2+\frac ba,q\right)_v.$
\end{proof}
Therefore, we can express all relevant Brauer classes in terms of the representatives introduced above.
\begin{corollary}
Let $Z\subseteq\mathbb{Z}^\times_{\mathrm{free}}$ be the set of representatives chosen above. For every class
$[m]\in M$, there exists a unique $z\in Z$ such that
$[m]=[z] $ in $K^\times/K^{\times2}.$
For this representative we have 
$\beta_m=\beta_z=\left(x^2+\frac ba,z\right)
$ in $\operatorname{Br}(X^\circ).$
For every place $v\in\Omega_{\mathbb{Q}}$ and every
$x_v\in X^\circ(\mathbb{Q}_v)$ we have 
$\beta_m(x_v)=
\left(x_v^2+\frac ba,z\right)_v.$
\end{corollary}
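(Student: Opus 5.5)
The corollary follows by combining Proposition~\ref{prop:bijection} with the preceding proposition on Hilbert symbols, so the plan is mostly bookkeeping. First, let $[m]\in M$. By Proposition~\ref{prop:bijection}, the map $F\colon Z\to M$, $z\mapsto[z]$, is bijective. Hence there is exactly one $z\in Z$ with $[z]=[m]$ in $K^\times/K^{\times2}$. Existence is the surjectivity of $F$: by Lemma~\ref{lemma:M}, choose a rational representative, pass to its squarefree part, and, if necessary, replace it by its $I$-image using $[\,-ab\,c\,]=[c]$ in $K^\times/K^{\times 2}$. Uniqueness is the injectivity of $F$, which rests on Lemma~\ref{lemma:square} and on the fact that $Z$ meets each $I$-orbit exactly once.

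Second, apply the previous proposition with $q=z\in\mathbb{Q}^\times$. Writing $m=r+s\theta$, we have $[m]=[z]$, so that proposition gives $\beta_m=(x^2+\tfrac ba,z)$ in $\operatorname{Br}(X^\circ)$. The element $z$ itself satisfies $N_{K/\mathbb{Q}}(z)=z^2\in\mathbb{Q}^{\times2}$, so $\beta_z=(x^2+\tfrac ba,z+0\cdot t)$ is a legitimate representative of the form considered in the earlier proposition on quaternion algebras, with $r=z$ and $s=0$. Applying the same proposition to $m=z$, $q=z$ shows that $\beta_z=(x^2+\tfrac ba,z)$ as well. Hence $\beta_m=\beta_z$ in $\operatorname{Br}(X^\circ)$.

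Finally, the local statement follows by evaluating at points. For $x_v\in X^\circ(\mathbb{Q}_v)$, the specialization of the quaternion class $(x^2+\tfrac ba,z)$, which is defined on all of $X^\circ$, is the local quaternion algebra $(x_v^2+\tfrac ba,z)$ over $\mathbb{Q}_v$. The value $x_v^2+\tfrac ba$ is nonzero because $x_v\notin D_1$, and $z\neq0$. Its invariant is encoded by the Hilbert symbol, so $\beta_m(x_v)=(x_v^2+\tfrac ba,z)_v$.

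The only point needing care is the point at infinity of $\mathbb{P}^1$, which is not covered by the affine coordinate $x$. There $x^2+\tfrac ba$ has even valuation $-2$, so one rewrites the symbol in the coordinate $1/x$ as $(1+\tfrac ba x^{-2},z)$, using that squares do not affect the class. I expect no genuine obstacle. The main thing to verify is this regularity at infinity, together with the fact that evaluation of a class in $\operatorname{Br}(X^\circ)$ is independent of the chosen presentation. The latter is immediate because the presentation $(x^2+\tfrac ba,z)$ is regular on all of $X^\circ$.
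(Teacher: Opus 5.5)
Your proof is correct and follows the paper's argument. Existence and uniqueness of $z$ come from the bijection $F\colon Z\to M$ of Proposition~\ref{prop:bijection}, and both the identity $\beta_m=(x^2+\frac ba,z)$ and its local evaluation come from the preceding proposition applied with $q=z$. Your additional checks — that $\beta_z=(x^2+\frac ba,z)$ holds by definition with $r=z$, $s=0$, and that the class is regular at the point at infinity via the coordinate $1/x$ — are details the paper leaves implicit.
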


\begin{proof}
The existence and uniqueness of $z$ follow from the bijection
$Z\xrightarrow{\sim}M,z\longmapsto[z].$
The statement about the Brauer classes then follows from the previous proposition.
\end{proof}
It follows by the next corollary that the local factors appearing in the Tamagawa-type constant can be computed entirely in terms of the rational representatives $z\in Z$.
\begin{corollary}
Let $z\in Z$ be the unique representative corresponding to
$[m]\in M$. Then
$\hat{\tau}_{U,D}(\beta_m)=\hat{\tau}_{U,D}(\beta_z),$
where
$$\hat{\tau}_{U,D}(\beta_z)=\prod_{v\in\Omega_{\mathbb{Q}}}\lambda_v^{-1}\int_{X^\circ(\mathbb{Q}_v)_\epsilon}\left(x_v^2+\frac ba,z\right)_vH_v(D,x_v)d\tau_{U,v}.$$
\end{corollary}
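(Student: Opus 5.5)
This is immediate from the preceding corollary: the class $[m]$ determines a unique $z\in Z$, and the Brauer class of $\beta_m$ equals that of $\beta_z=\left(x^2+\frac ba,z\right)$ in $\operatorname{Br}(X^\circ)$. Since $\hat{\tau}_{U,D}$ depends only on the Brauer class, the equality $\hat{\tau}_{U,D}(\beta_m)=\hat{\tau}_{U,D}(\beta_z)$ follows. It remains to simplify the definition (\ref{def:tau}) into the displayed product.

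\textbf{Step 1 (the integrand).} At every place $v$ and every $x_v\in X^\circ(\mathbb{Q}_v)$, the local evaluation $e^{2\pi i\operatorname{inv}_v\beta(x_v)}$ depends only on the Brauer class $\beta$. By the previous corollary, $\beta_m(x_v)=\left(x_v^2+\frac ba,z\right)_v$. The Hilbert symbol takes values in $\{\pm1\}$ and is identified with $e^{2\pi i\operatorname{inv}_v}$ of the quaternion algebra, so the integrands for $\beta_m$ and $\beta_z$ agree pointwise on $X^\circ(\mathbb{Q}_v)_\epsilon$. This holds everywhere, not merely almost everywhere, because the presentation $(A,z)$ is defined on all of $X^\circ$. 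The integrals therefore coincide at each place.

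\textbf{Step 2 (global factors).} Substitute the quantities computed just before the corollary:
\begin{itemize}
\item $L^*(\operatorname{Pic}(\overline U),1)=1$, since $\operatorname{Pic}(\overline U)\cong\mathbb{Z}$ with trivial Galois action;
\item the factors $\lambda_v^{-1}$ are left as they are;
\item $H_v(a((X,D),L)L+K_X+D,x_v)=1$ for every $v$, since this divisor is $\mathbb{Q}$-linearly trivial and the adelic metrics are chosen compatibly.
\end{itemize}
With these, the denominator in (\ref{def:tau}) disappears and the global factor is $1$. What remains is exactly $\prod_v\lambda_v^{-1}\int_{X^\circ(\mathbb{Q}_v)_\epsilon}\left(x_v^2+\frac ba,z\right)_vH_v(D,x_v)\,d\tau_{U,v}$.

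\textbf{Main obstacle.} The only delicate point is well-definedness. First, $\hat{\tau}_{U,D}(\beta)$ must depend only on the class of $\beta$ modulo $\operatorname{Br}(\mathbb{Q})$. For $\gamma\in\operatorname{Br}(\mathbb{Q})$, the local invariants of $\gamma$ are constants and sum to $0$ by the reciprocity law (Albert–Brauer–Hasse–Noether). So the product of local factors is unchanged, provided the Euler product converges and may be factored. Second, the choice of representative $z\in Z$ rather than $I(z)$ must not matter. This holds because $[z]=[I(z)]$ in $K^\times/K^{\times2}$, so $\beta_z=\beta_{I(z)}$ in $\operatorname{Br}(X^\circ)$ by the preceding proposition.

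I would verify convergence in the same way as for the PSTV-A Euler product in Section~\ref{sec:PSTVA}. For $p\nmid 2abz$, the symbol $\left(x_p^2+\frac ba,z\right)_p$ is $1$ on $p$-integral points where $x_p^2+\frac ba$ is a unit. This makes the local factor equal to $1+O(p^{-3/2})$, matching $\sigma_p$ up to the factor $\lambda_p^{-1}$.
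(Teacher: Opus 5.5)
Your proposal is correct and is essentially the paper's argument. The preceding corollary gives $\beta_m(x_v)=\beta_z(x_v)=\left(x_v^2+\frac ba,z\right)_v$ at every place, so the local integrals and hence their product coincide, and your Step 2 only recalls the simplifications $L^*(\operatorname{Pic}(\overline U),1)=1$ and $H_v(a((X,D),L)L+K_X+D,x_v)=1$ that the paper makes just before the corollary.

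Your remarks on $\operatorname{Br}(\mathbb{Q})$-invariance and convergence are not needed for the statement. The convergence sketch is also incomplete as written: you must additionally check that the symbol equals $1$ on the regions $\nu_p(t)<0$ and $\nu_p\bigl(x_p^2+\frac ba\bigr)=2$, which carry mass of order $p^{-1}$. Only the odd shells $\nu_p\bigl(x_p^2+\frac ba\bigr)\ge 3$, of mass $O(p^{-3/2})$, then remain. The paper's explicit formula for $I_p(z)$ confirms your conclusion.
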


\begin{proof}
By the preceding corollary, for every place $v$ and every
$x_v\in X^\circ(\mathbb{Q}_v)$,
$$\beta_m(x_v)=\beta_z(x_v)=
\left(x_v^2+\frac ba,z\right)_v.$$
Since the remaining factors in the local integrals are independent of the chosen representative of the Brauer class, the corresponding local integrals agree at every place. Taking the product over all places proves the assertion.
\end{proof}
Consequently, using the bijection $Z\simeq M$, the sum over the relevant Brauer classes may be rewritten as
\begin{align*}
    c_{\text{CLT-BT}}= \frac{1}{2} \sum_{z\in Z} \prod_{v\in\Omega_{\mathbb{Q}}} \lambda_v^{-1} \int_{X^\circ(\mathbb{Q}_v)_\epsilon}\left(x_v^2+\frac ba,z\right)_v H_v(D,x_v)d\tau_{U,v}.
\end{align*}

\subsubsection{Local height functions and Campana points}
The local height associated with $D$ is $$H_{D,p}([x:y])
=\frac{\max(|x|_p,|y|_p)}
{|ax^2+by^2|_p^{1/2}}$$ at every finite place.
On the affine chart $y \neq 0$
we write $t=\frac{x}{y}.$
Since $ax^2+by^2=y^2(at^2+b),$
the local height becomes $$H_{D,p}(t)=\frac{\max(1,|t|_p)}{|at^2+b|_p^{1/2}}.$$
We next describe the domain of integration.
Since $X^\circ=X\setminus\operatorname{Supp}(D)=\mathbb{P}^1\setminus V(ax^2+by^2),$
the local Tamagawa integrals are taken over the Campana points of
$X^\circ(\mathbb{Q}_v)_\epsilon$.
On the affine chart $y\neq0$, writing $t = \frac{x}{y}$
we identify $X^\circ(\mathbb{Q}_v)=\{t\in \mathbb{Q}_v:at^2+b\neq0\}.$
The Campana condition imposes the additional local restriction only at
finite places $v\notin S$. There we define on the finite places, which ar enot in $S: X^\circ(\mathbb{Q}_p)_{\text{Campana}} \coloneqq \{t\in \mathbb{Q}_p:at^2+b\neq0, \nu_p(at^2+b) \text{not one or negative odd integer}\}$. Therefore we get
\begin{align*}
X^\circ(\mathbb{Q}_v)_\epsilon= 
\begin{cases} X^\circ(\mathbb{Q}_p)_{\text{Campana}}  ,& v\notin S, $ v \text{ finite}$\\
X^\circ(\mathbb{Q}_v),& v\in S\\ 
\mathbb{R} & v=\infty \end{cases}
\end{align*}
Therefore we obtain 
\begin{align*}
\hat{\tau}_{U,D}(\beta_z)=I_\infty(z)\prod_{p\in S}I_p^{S}(z)\prod_{p\notin S}I_p(z).
\end{align*}
with 
\begin{align*}
I_p(z):=(1-p^{-1})\int_{X^\circ(\mathbb{Q}_p)_{\text{Campana}} }\left(t^2+\frac ba,z\right)_p\frac{\max(1,|t|_p)}{|at^2+b|_p^{1/2}}d\tau_{U,p},
\end{align*}
\begin{align*}
I_p^{S}(z):=(1-p^{-1})\int_{X^\circ(\mathbb{Q}_p)}\left(t^2+\frac ba,z\right)_p\frac{\max(1,|t|_p)}{|at^2+b|_p^{1/2}}d\tau_{U,p}
\end{align*}    
 and 
\begin{align*}
I_\infty(z):=\int_{\mathbb{R}}\left(t^2+\frac ba,z\right)_\infty\frac{\max(1,|t|)}{|at^2+b|^{1/2}}d\tau_{U,\infty},
      \end{align*}

\subsubsection{$I_\infty(z)$ - the infinite place }\label{sec:infinit}
We observe that $\left( t^2 +\frac{b}{a},z\right)_{\infty}=1$ because $t^2 + \frac{b}{a}$ is never zero. 
Therefore we get the same as in the PSTV-A conjecture
\begin{align*}
    I_\infty(u) &= \int_\mathbb{R} \frac{dt}{\max(1,|t|) \sqrt{at^2+b}}\\
    &= 2 \left( \frac{\sinh^{-1}(\sqrt{a/b})}{\sqrt{a}}+ \frac{\sinh^{-1}(\sqrt{b/a})}{\sqrt{b}}\right)\\
    &= 2\overline{c}_{a,b}
\end{align*}

\subsubsection{$I_p(z)$-  The good primes }\label{sec:good}
We compute the integrals in a similar way as in \cite[Section~6.1]{shute}. 
For $p \notin S, p \neq 2$,  we have: 
\begin{align*}
        \Omega_p & \coloneqq  X^\circ(\mathbb{Q}_p)_{\text{Campana}}\\
&=\left\{t\in\mathbb{Q}_p: \nu_p(at^2+b)\neq1,\;\nu_p(at^2+b)\notin-2\mathbb{N}-1, at^2+b \neq 0\right\}.
    \end{align*}
Because of the Campana condition and since $\nu_p\left(t^2 + \frac{b}{a}\right)= \nu_p(at^2+b)$ it is not allowed that $\nu_p(at^2+b)$ is equal to one or an odd negative integer.
We normalize the additive Haar measure $dt$ on $\mathbb{Q}_p$ by $\operatorname{vol}(\mathbb{Z}_p)dt=1$ and
for $z\in Z$ we  write $  \chi_{z,p}(t)=\left(t^2+\frac ba,z\right)_p.  $
The local Tamagawa measure on $\mathbb{P}_{\mathbb{Q}_p}^1$ is
\begin{align*}
d\tau_{U,p}(t)=\frac{dt}{\max(1,|t|_p)^2}.
\end{align*}
On the affine chart, with coordinate $\tilde{z}=\frac1t = \frac{y}{x},$
we have $d\tilde{z}=-\frac{dt}{t^2},$
and hence $|d\tilde{z}|_p=\frac{|dt|_p}{|t|_p^2}.$ Thus we obtain
\begin{align*}
 H_{D,p}(t)d\tau_{U,p}(t)=\frac{\max(1,|t|_p)}{|at^2+b|_p^{1/2}}\frac{dt}{\max(1,|t|_p)^2}=\frac{dt}{\max(1,|t|_p)|at^2+b|_p^{1/2}}.
\end{align*}
Therefore we get
\begin{align*}
I_p(z)=(1-p^{-1})\int_{\Omega_p}\chi_{z,p}(t)\frac{dt}{\max(1,|t|_p)|at^2+b|_p^{1/2}}.
\end{align*}
We decompose the integral domain according to the valuation of $t$ in $
\Omega_p=\Omega_p^{+}\sqcup\Omega_p^{0}\sqcup\Omega_p^{-}$
with $\Omega_p^{+}=\{t\in\Omega_p:\nu_p(t)\geq1\}, $ $\Omega_p^{0}=\{t\in\Omega_p:\nu_p(t)=0\}$ and $\Omega_p^{-}=
\{t\in\Omega_p:\nu_p(t)<0\}.$
Define $A(t)\coloneqq  t^2+\frac{b}{a}$ and let $A_{0,p}(t) $ such that $A(t) = p^{\nu_p(A(t))}A_{0,p}(t)$ and $B(t)\coloneqq  z$ and let $B_{0,p} $ such that $B(t) = p^{\nu_p(B(t))}B_{0,p}(t)$.
Then we have 
\begin{align*}
    \chi_{z,p}(t)=(-1)^{\nu_p(A(t))\nu_p(z)(\frac{p-1}{2})}\left( \frac{A_{0,p}(t)}{p} \right)^{\nu_p(z)}\left( \frac{B_{0,p}(t)}{p} \right)^{\nu_p(A(t))}.
\end{align*}
On $\Omega_p^+$ we have $\nu_p(t)\geq 1$, so $\nu_p(A(t))=0$. Thus the Campana condition is fulfilled.
We observe $\chi_{z,p}(t)
= \left( \frac{ab}{p} \right)^{\nu_p(z)}$
and obtain 
\begin{align*}
    \int_{\Omega_p^+} \frac{\chi_{z,p}(t)dt}{\max(1,|t|_p)|at^2+b|_p^\frac{1}{2}} &= \left( \frac{ab}{p} \right)^{\nu_p(z)}\int_{\substack{\mathbb{Q}_p \\ 
    \nu_p(t)\geq 1\\ t^2+\frac{b}{a}\neq 0}} \frac{dt}{\max(1,|t|_p)|at^2+b|_p^\frac{1}{2}}\\
    &= \left( \frac{ab}{p} \right)^{\nu_p(z)}p^{-1}. 
\end{align*}

On $\Omega_p^-$ we have $\nu_p(t)<0$. Therefore $\nu_p(A(t))$ is always an negative even integer and so the Campana condition is fulfilled. 
We also observe if $\nu_p(t)=-k<0$ and since $p$ is a odd prime that 
Therefor we get 
\begin{align*}
    \chi_{z,p}(t) =\left( \frac{\frac{t^2 + \frac{b}{a}}{p^{-2k}}}{p} \right)^{\nu_p(z)}. 
\end{align*}
Together with $\frac{t^2 + \frac{b}{a}}{p^{-2k}} = \left( t_1^2 + \frac{b}{a}p^{2k}\right)$
and $t_1^2 + \frac{b}{a}p^{2k}\equiv t_1^2$ $\mod p$ we obtain  
\begin{align*}
    \chi_{z,p}(t) =\left( \frac{\frac{t^2 + \frac{b}{a}}{p^{-2k}}}{p} \right)^{\nu_p(z)}  = \left( \frac{ t_1^2 + \frac{b}{a}p^{2k}}{p} \right)^{\nu_p(z)}  = \left( \frac{ t_1^2 }{p} \right)^{\nu_p(z)}  =1.
\end{align*}
Thus we get 
\begin{align*}
    \int_{\Omega_p^-} \frac{\chi_{z,p}(t)dt}{\max(1,|t|_p)|at^2+b|_p^\frac{1}{2}} =  \int_{\substack{\mathbb{Q}_p\\\nu_p(t)<0 \\ t^2+ \frac{b}{a}\neq 0}} \frac{dt}{\max(1,|t|_p)|at^2+b|_p^\frac{1}{2}}   
    = p^{-1}. 
\end{align*}

We consider the last integration domain $\Omega_p^0$.
Here we have $\nu_p(t)=0 $ so $t \in \mathbb{Z}_p^\times$.
Then $\nu_p\left(t^2 + \frac{b}{a}\right) \geq 0$.
We obtain with the Campana condition that $\nu_p(at^2+b)=1$ is forbidden and decompose the integral by the value of $\nu_p(at^2+b)$. 
\begin{align*}
    \int_{\Omega^0}
\frac{\chi_{z,p}(t)dt}{\max(1, |t|_p)|at^2+b|_p^\frac{1}{2}}=\int_{\substack{\mathbb{Q}_p\\ \nu_p(at^2+b)=0\\ \nu_p(t)=0\\ t^2 + \frac{b}{a}\neq 0}}
\frac{\chi_{z,p}(t)dt}{\max(1, |t|_p)|at^2+b|_p^\frac{1}{2}}\\+\int_{\substack{\mathbb{Q}_p\\ \nu_p(at^2+b)\geq 2 \\ \nu_p(t)=0\\ t^2+\frac{b}{a}\neq 0}}
\frac{\chi_{z,p}(t)dt}{\max(1, |t|_p)|at^2+b|_p^\frac{1}{2}}.
\end{align*}
For the first integral we obtain 
\begin{align*}
    \int_{\substack{\mathbb{Q}_p\\ \nu_p(at^2+b)=0\\ \nu_p(t)=0\\ t^2+\frac{b}{a}\neq 0}}
\frac{\chi_{z,p}(t)dt}{\max(1, |t|_p)|at^2+b|_p^\frac{1}{2}}
&= \int_{\substack{\mathbb{Z}_p^{\times}\\ at^2 + b \in \mathbb{Z}_p^\times }}\left( \frac{t^2 + \frac{b}{a}}{p} \right)^{\nu_p(z)}
dt\\
&= \frac{1}{p}\sum_{\substack{t \in \mathbb{F}_p^\times\\ at^2 +b \neq 0}}\left( \frac{t^2 + \frac{b}{a}}{p} \right)^{\nu_p(z)}\\
&= \begin{cases}
    \frac{p-2-\left(\frac{-ab}{p} \right)}{p} & \text{ if } p \nmid z \\
    -\frac{1+\left(\frac{ab}{p} \right) }{p} & \text{ if } p|z.
\end{cases}
\end{align*}
For the second integral we have 
\begin{align*}
    \chi_{u,p}(t)
=(-1)^{\nu_p(at^2+b)\nu_p(z)(\frac{p-1}{2})}\left( \frac{\frac{t^2+\frac{b}{a}}{p^{\nu_p(at^2+b)}}}{p} \right)^{\nu_p(z)}\left( \frac{\frac{z}{p^{\nu_p(z)}}}{p} \right)^{\nu_p(at^2+b)}
\end{align*}
And thus 
\begin{align*}
    \int_{\substack{\mathbb{Q}_p\\ \nu_p(at^2+b)\geq 2 \\ \nu_p(t)=0\\ t^2+\frac{b}{a}\neq 0 }}
\frac{\chi_{z,p}(t)dt}{\max(1, |t|_p)|at^2+b|_p^\frac{1}{2}} 
&= \begin{cases}
    \frac{2(1-p^{-1})p^{-1}}{1-\left( \frac{z}{p}\right)p^{-\frac{1}{2}}} & \text{ if } \left( \frac{-ab}{p} \right)=1, p\nmid z\\
     0 & \text{ otherwise. }
\end{cases}
\end{align*}
So we conclude 
\begin{align*}
            \int_{\Omega^0}
\frac{\chi_{z,p}(t)dt}{\max(1, |t|_p)|at^2+b|_p^\frac{1}{2}} &= \begin{cases} \frac{p-3}{p}  + 
    \frac{2(1-p^{-1})p^{-1}}{1-\left( \frac{z}{p}\right)p^{-\frac{1}{2}}} & \text{ if } \left( \frac{-ab}{p} \right)=1, p\nmid z\\
    \frac{p-1}{p}  & \text{ if }\left( \frac{-ab}{p}\right)=-1, p \nmid z\\
     -\frac{1+\left( \frac{ab}{p} \right)}{p} & \text{ if } p \vert z.
\end{cases} 
\end{align*}
And in total total we get
\begin{align}\label{eq:I_p}
    &
I_p(z)= \begin{cases}
    (1-p^{-1})\left( 1+p^{-1}+\left( 1 + \left( \frac{-ab}{p}\right)\right)\left( \frac{z}{p}\right)p^{\frac{-3}{2}}\right) & \text{ if } p \nmid z \\
    0 & \text{ if }p\vert z. 
\end{cases}
\end{align}

\subsubsection{$I_p^{S}(u)$
- The bad primes   }\label{sec:bad}
For the primes in $S$ we get 
      \begin{align*}
     I_p^{S}(z) = (1-p^{-1})\int_{\substack{\mathbb{Q}_p\\ at^2+b \neq 0}}\left(t^2+\frac ba,z\right)_p \frac{dt}{\max(1,|t|_p)|at^2+b|_p^\frac{1}{2}},
      \end{align*}
where $X^\circ(\mathbb{Q}_v)=\{t\in \mathbb{Q}_v:at^2+b\neq0\}.$
In the same way as we computed the integrals for the good primes we do it for the bad primes. We obtain if $p\vert a$
\begin{align*}
    I_p^{S}(z) = \begin{cases}
        (1-p^{-1})\left(\left( \frac{\frac{z}{\nu_p(z)}}{p}\right)+p^\frac{-1}{2}\right) & \text{ if } p \nmid z\\
        (1-p^{-1})\left(\left( \frac{-b(a/p)}{p}\right)\left( \frac{\frac{z}{\nu_p(z)}}{p}\right)+p^\frac{-1}{2}\right) & \text{ if } p \vert z. 
    \end{cases}
\end{align*}
If $p\vert b $ we get 
\begin{align*}
     I_p^{S}(z) = \begin{cases}
        (1-p^{-1})\left(\left( \frac{\frac{z}{\nu_p(z)}}{p}\right)p^\frac{-1}{2} +1 \right) & \text{ if } p \nmid z\\
        (1-p^{-1})\left(\left( \frac{-a(b/p)}{p}\right)\left( \frac{\frac{z}{\nu_p(z)}}{p}\right)p^\frac{-1}{2} + 1\right) & \text{ if } p \vert z.
    \end{cases}
\end{align*}
It is left to consider $p=2.$ Since $2$ is not in $S$ we have 
\begin{align*}
    I_2(z)=\left(1-\frac{1}{2}\right)\int_{\Omega_2}\left( t^2 + \frac{b}{a},z\right)_2 \frac{dt}{\max(1,|t|_2)|at^2+b|_2^{\frac{1}{2}}}
\end{align*} with
\begin{align*}
        \Omega_2=\left\{t\in\mathbb{Q}_2:
at^2+b \neq 0, \nu_2(at^2+b)\neq 1, \nu_2(at^2+b)\notin -(2\mathbb{N}+1)
\right\}.
    \end{align*}
    As in the proof for the good primes we decompose $\Omega_2$ into $\Omega_2^-,\Omega_2^0 $ and $\Omega_2^+$. Because of the Campana condition $\Omega_2^0$ is forbidden. For $\Omega_2^-$ we write $\nu_2(at^2+b)$ as $-2m$ and $m\in \mathbb{N}$. Furthermore observe $\left( t^2 + \frac{b}{a},z\right)_2 = \left( t^2\left( 1+ \frac{b}{at^2} \right),z\right)_2 = \left(  1+ \frac{b}{at^2},z\right)_2$. If $m \geq 2$ we get $1+\frac{b}{at^2} \in \mathbb{Q}_2^{\times 2}$. If $m=1$ we have $\left( t^2 + \frac{b}{a},z\right)_2=(5,z)_2$. Thus 
    \begin{align*}
         \int_{\Omega_2^-} \frac{\chi_{z,2}(t)dt}{\max(1,|t|_2)|at^2+b|_2^\frac{1}{2}} = \frac{1}{4}((5,z)_2 + 1).
    \end{align*}
Next consider $\Omega_2^+.$ With a similar idea as above we get 
    \begin{align*} \int_{\Omega_2^+} \frac{\chi_{z,2}(t)dt}{\max(1,|t|_2)|at^2+b|_2^\frac{1}{2}}
    = \frac{1}{4}\left(\left(\frac{5b}{a},z\right)_2+\left(\frac{b}{a},z\right)_2\right). 
        \end{align*}
In total we obtain
\begin{align}\label{eq:I_2}
    I_2(z)&= \frac{1}{8} ((5,z)_2 + 1)\left( 1 +  \left(\frac{b}{a},z\right)_2 \right) = \begin{cases}
    \frac{1}{2} & \text{ if } 2\nmid z\\
    0 & \text{ if } 2\vert z.
    \end{cases}
\end{align}

\subsubsection{Further classification of the summation domain}
We fix a set $Z$. First we observe that for every orbit $\{z,I(z)\}$ one is negative and the other one is positive. Therefore we choose $Z$ such that it contains all positive elements.
Furthermore by (\ref{eq:I_2}) we get that $\hat{\tau}_{U,D}(\beta_z)=0$ if $2\vert z$. By (\ref{eq:I_p}) we get that $\hat{\tau}_{U,D}(\beta_z)$ is zero if there is a prime that divides $z$ which is not in $S$. Thus we can reduce our sum to the set 
\begin{align*}
    Z_{\text{fix}} = \{\mathbf{p}^{\mathbf{k}}\mathbf{q}^{\mathbf{l}} \in \mathbb{Z}_{\text{free}}^{\times }, \mathbf{p}^{\mathbf{k}} \vert a, \mathbf{q}^{\mathbf{l}} \vert b  \}.
\end{align*}

\subsubsection{Combining the local densities}
In this section we use similar ideas as in \cite[Section~6.3]{shute}. 
Now fix $z\in Z_{\text{fix}}$ and write $z$ as $\mathbf{p}^\mathbf{k}\mathbf{q}^\mathbf{l}$. Then we obtain with the results of Sections~\ref{sec:infinit}, \ref{sec:good} and \ref{sec:bad} and the fact $\prod_p (1-p^{-1})(1+p^{-1})= \frac{6}{\pi^2}$ that 
\begin{align*}
\hat{\tau}_{U,D}(\beta_z) = \frac{8\overline{c}_{a,b}\sigma_2}{\pi^2}\prod_{p\nmid 2ab}  \left( 1+\frac{\left( 1 + \left( \frac{-ab}{p}\right)\right)\left( \frac{\mathbf{p}^\mathbf{k}\mathbf{q}^\mathbf{l}}{p}\right)p^{\frac{-3}{2}}}{1+p^{-1}}\right)\\
    \times \prod_{p\vert \mathbf{p}^{\mathbf{k}}}  \frac{\left( \frac{\frac{-ba}{p}}{p}\right)\left( \frac{\frac{\mathbf{p}^\mathbf{k}\mathbf{q}^\mathbf{l}}{p}}{p}\right)+p^\frac{-1}{2}}{1+p^{-1}}  
    \prod_{p\vert \frac{a}{\mathbf{p}^{\mathbf{k}}}}\frac{\left( \frac{\mathbf{p}^\mathbf{k}\mathbf{q}^\mathbf{l}}{p}\right)+p^\frac{-1}{2}}{1+p^{-1}}\\
    \times \prod_{p\vert \mathbf{q}^{\mathbf{l}}} \frac{\left( \frac{\frac{-ab}{p}}{p}\right)\left( \frac{\frac{\mathbf{p}^\mathbf{k}\mathbf{q}^\mathbf{l}}{p}}{p}\right)p^\frac{-1}{2} + 1}{1+p^{-1}}  \prod_{p\vert \frac{b}{\mathbf{q}^{\mathbf{l}}}}  \frac{\left( \frac{\mathbf{p}^\mathbf{k}\mathbf{q}^\mathbf{l}}{p}\right)p^\frac{-1}{2} +1 }{1+p^{-1}} .
\end{align*}
Furthermore we observe
\begin{align*}
    \prod_{p\nmid 2ab}  \left( 1+\frac{\left( 1 + \left( \frac{-ab}{p}\right)\right)\left( \frac{\mathbf{p}^\mathbf{k}\mathbf{q}^\mathbf{l}}{p}\right)p^{\frac{-3}{2}}}{1+p^{-1}}\right)\\ = \sum_{\substack{v \text{ squarefree}\\ (v,2ab)=1}}\frac{1}{v^{\frac{3}{2}}}\prod_{p \vert v}(1+p^{-1})^{-1}\prod_{p\vert v}\left( 1 + \left( \frac{-ab}{p}\right) \right)\left( \frac{\mathbf{p}^{\mathbf{k}}\mathbf{q}^{\mathbf{l}}}{p}\right). 
\end{align*}
And we can write all the other Euler products as sums and obtain together with $\left( \frac{ \frac{-ab\mathbf{q}^\mathbf{l}}{\mathbf{p}^{\mathbf{k}}}}{\mathbf{p}^\mathbf{k}}\right)\left( \frac{\mathbf{p}^\mathbf{k}\mathbf{q}^\mathbf{l}}{\frac{a}{\mathbf{p}^{\mathbf{k}}}}\right) = \left( \frac{\frac{b}{\mathbf{q}^\mathbf{l}}}{\mathbf{p}^\mathbf{k}} \right) \left( \frac{\mathbf{q}^\mathbf{l}}{\frac{a}{\mathbf{p}^\mathbf{k}}}\right)$ that $\hat{\tau}_{U,D}(\beta_z)$ is equal to 
\begin{align*}
  \frac{8\overline{c}_{a,b}}{\pi^2}\left( \frac{\frac{b}{\mathbf{q}^\mathbf{l}}}{\mathbf{p}^\mathbf{k}} \right) \left( \frac{\mathbf{q}^\mathbf{l}}{\frac{a}{\mathbf{p}^\mathbf{k}}}\right) \sum_{d \vert \frac{a}{\mathbf{p}^\mathbf{k}}}\sum_{e \vert \frac{b}{\mathbf{q}^\mathbf{l}}}\sum_{r\vert \mathbf{p}^{\mathbf{k}}} \sum_{s\vert \mathbf{q}^{\mathbf{l}}}\sum_{\substack{v \text{ squarefree}\\ (v,2ab)=1}}\frac{(ders)^{\frac{-1}{2}}}{v^{\frac{3}{2}}}\prod_{p \vert abv}(1+p^{-1})^{-1}\\
\times \prod_{p\vert v}\left( 1 + \left( \frac{-ab}{p}\right) \right)\left( \frac{\mathbf{p}^{\mathbf{k}}\mathbf{q}^{\mathbf{l}}}{p}\right)
       \left( \frac{ \frac{-ab\mathbf{q}^\mathbf{l}}{\mathbf{p}^{\mathbf{k}}}}{r}\right)  
    \left( \frac{\mathbf{p}^\mathbf{k}\mathbf{q}^\mathbf{l}}{d}\right)    \left( \frac{ \frac{-ab\mathbf{p}^\mathbf{k}}{\mathbf{q}^{\mathbf{l}}}}{s}\right)  
    \left( \frac{\mathbf{p}^\mathbf{k}\mathbf{q}^\mathbf{l}}{e}\right).
\end{align*}
Let 
\begin{align*}
    \tilde{f}(v) = \frac{\mu^2(v)\gamma(v)}{v^\frac{3}{2}}\prod_{p\mid v} \left(  1+ \left( \frac{-ab}{p}\right)\right)\left( \frac{\mathbf{p}^{\mathbf{k}}\mathbf{q}^{\mathbf{l}}}{p}\right)
\end{align*}
with $\gamma(v) = \prod_{p \vert v}(1+p^{-1})^{-1}$. 
Then $\tilde{f}$ is multiplicative in $v$ for fixed $\mathbf{p}^\mathbf{k}\mathbf{q}^\mathbf{l}$.
Thus we conclude with (\ref{eq:new_conj_sum}) that
\begin{align}
    c_{\text{CLT-BT}} = \frac{4\overline{c}_{a,b}}{\pi^2}\prod_{p \vert ab}(1+p^{-1})^{-1}\sum_{\mathbf{k}\in \{0,1\}^n}\sum_{\mathbf{l}\in \{0,1\}^m} \hat{Z}(\mathbf{k},\mathbf{l})
\end{align}
with 
\begin{align*}
    \hat{Z}(\mathbf{k},\mathbf{l}) = \left( \frac{\frac{b}{\mathbf{q}^\mathbf{l}}}{\mathbf{p}^\mathbf{k}} \right) \left( \frac{\mathbf{q}^\mathbf{l}}{\frac{a}{\mathbf{p}^\mathbf{k}}}\right) \sum_{d \vert \frac{a}{\mathbf{p}^\mathbf{k}}}\sum_{e \vert \frac{b}{\mathbf{q}^\mathbf{l}}}\sum_{r\vert \mathbf{p}^{\mathbf{k}}} \sum_{s\vert \mathbf{q}^{\mathbf{l}}} (ders)^{\frac{-1}{2}}
    \left( \frac{ \frac{-ab\mathbf{q}^\mathbf{l}}{\mathbf{p}^{\mathbf{k}}}}{r}\right)
    \left( \frac{\mathbf{p}^\mathbf{k}\mathbf{q}^\mathbf{l}}{d}\right)\\ \times   \left( \frac{ \frac{-ab\mathbf{p}^\mathbf{k}}{\mathbf{q}^{\mathbf{l}}}}{s}\right) \left( \frac{\mathbf{p}^\mathbf{k}\mathbf{q}^\mathbf{l}}{e}\right)  
    \times  \prod_{p\nmid 2ab }\left(1+\frac{1}{p^\frac{3}{2}}\left( 1 + \frac{1}{p} \right)^{-1}\left(  1+ \left( \frac{-ab}{p}\right)\right)\left( \frac{\mathbf{p}^\mathbf{k}\mathbf{q}^{\mathbf{l}}}{p}\right)\right). 
\end{align*}

\section{Comparison of the constants }\label{seq:comparison_constant}
The aim of this section is to rewrite the two constants, $C$ and $c_{\text{CLT-BT}}$, in different forms so that a direct comparison can be made.
We write 
\begin{align*}
\gamma_{(\mathbf{k},\mathbf{l}),v} = \gamma(v)\prod_{p \vert ab} \left( 1 + \frac{1}{p}\right)^{-1}
\end{align*}
with $\gamma(v)=\prod_{p \vert v} \left( 1 + \frac{1}{p}\right)^{-1}$. Then we obtain for $C$ in (\ref{eq:C_final}):
\begin{align*}
C&=\frac{4\overline{c}_{a,b}}{\pi^2}\prod_{p \vert ab} \left( 1 + \frac{1}{p}\right)^{-1} \\ &\times \sum_{\mathbf{k}\in \{0,1\}^n}\sum_{\mathbf{l}\in \{0,1\}^m} \frac{1}{(\mathbf{p}^{\mathbf{k}}  \mathbf{q}^{\mathbf{l}})^{\frac{1}{2}}}\sum_{\substack{v\\ v\mathbf{p}^{\mathbf{k}}  \mathbf{q}^{\mathbf{l}} \equiv 1 \text{ mod }4}} \frac{\mu^2(abv)\gamma(v)\varrho_{(\mathbf{k},\mathbf{l}),v}}{v^\frac{3}{2}}.
\end{align*}
We replace $\mu^2(abv)$ by $\mu^2(v)$ and the condition $gcd(v,ab)=1$. Furthermore let $R=\frac{4\overline{c}_{a,b}}{\pi^2} \prod_{p \vert ab }\left(1+\frac{1}{p} \right)^{-1} $.
Then we get 
\begin{align*}
    \frac{C}{R} = \sum_{\mathbf{k}\in \{0,1\}^n}\sum_{\mathbf{l}\in \{0,1\}^m} \frac{1}{(\mathbf{p}^{\mathbf{k}}  \mathbf{q}^{\mathbf{l}})^{\frac{1}{2}}}\sum_{\substack{v\\ v\mathbf{p}^{\mathbf{k}}  \mathbf{q}^{\mathbf{l}} \equiv 1 \text{ mod }4\\ \gcd(v,ab)=1}} \frac{\mu^2(v)\gamma(v)\varrho_{(\mathbf{k},\mathbf{l}),v}}{v^\frac{3}{2}}. 
\end{align*}
\begin{lemma}
If $\varrho_{(\mathbf{k},\mathbf{l}),v} \neq 0$, and $v$ odd, then
\begin{align}
    v \equiv \begin{cases}
         1 \mod 4 & \text{ if } \mathbf{p}^\mathbf{k}\mathbf{q}^\mathbf{l} \equiv 1 \mod 4\\
         3 \mod 4 & \text{ if } \mathbf{p}^\mathbf{k}\mathbf{q}^\mathbf{l} \equiv 3 \mod 4\\
    \end{cases}
\end{align}
\end{lemma}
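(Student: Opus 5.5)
The plan is to read the nonvanishing of $\varrho_{(\mathbf{k},\mathbf{l}),v}$ as local solvability of the conic
\begin{align*}
C_N:\quad ax^2+by^2=Nz^2, \qquad N\coloneqq v\mathbf{p}^\mathbf{k}\mathbf{q}^\mathbf{l},
\end{align*}
at every odd prime. Hilbert reciprocity then forces solvability at $2$. Finally, the $2$-adic condition will be shown to be exactly $N\equiv 1 \bmod 4$. Since $v$ and $\mathbf{p}^\mathbf{k}\mathbf{q}^\mathbf{l}$ are odd, $N\equiv 1\bmod 4$ is equivalent to $v\equiv \mathbf{p}^\mathbf{k}\mathbf{q}^\mathbf{l}\bmod 4$, which is the claim. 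Throughout, $v$ is squarefree and coprime to $ab$, since this is the summation range in $C/R$. Then $a$, $b$, $v$ are pairwise coprime and $N$ is squarefree. Also $v^3$ and $v$ give the same Legendre symbols, so I work with $N$ instead of $v^3\mathbf{p}^\mathbf{k}\mathbf{q}^\mathbf{l}$.

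First step: translate each factor of $\varrho$ into a local condition at an odd prime $p$. Every factor of $\varrho$ has the form $1+(\cdot)$, so $\varrho\neq 0$ means no Legendre symbol equals $-1$. At each odd $p\mid abN$ the relevant symbol has a $p$-unit numerator, so it actually equals $+1$.
\begin{itemize}
\item For $p\nmid abN$ the conic has good reduction, and Chevalley--Warning plus Hensel give a $p$-adic point.
\item For $p\mid v$, the factor gives $\left(\frac{-ab}{p}\right)=1$. Then $ax^2+by^2\equiv 0 \bmod p$ has a solution with $p\nmid xy$, and I lift it with $z=0$.
\item For $p\mid a/\mathbf{p}^\mathbf{k}$, the factor gives $\left(\frac{bN}{p}\right)=1$. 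Then $by^2\equiv Nz^2 \bmod p$ is solvable with $p\nmid yz$, and I lift it.
\item For $p\mid\mathbf{p}^\mathbf{k}$, I write $a=p\hat a$ and $N=p\hat N$ and substitute $y=py'$, exactly as in the lemma treating $p\mid\mathbf{p}^\mathbf{k}$. This reduces the conic to $\hat a x^2+bpy'^2=\hat N z^2$. The factor $\left(\frac{aN/p^2}{p}\right)=1$ makes $\hat a x^2\equiv \hat N z^2 \bmod p$ solvable, and I lift it.
\item The primes $p\mid b/\mathbf{q}^\mathbf{l}$ and $p\mid\mathbf{q}^\mathbf{l}$ are treated symmetrically.
\end{itemize}
At $\infty$, $C_N$ has the real point $(\sqrt{N/a},0,1)$ because $a,b,N>0$.

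Second step: apply Hilbert reciprocity to the Hilbert symbol $(-ab\cdot\text{stuff})$ attached to the ternary form $\langle a,b,-N\rangle$. Local solvability at all places except $2$ forces local solvability at $2$, since the number of places where the conic is not locally solvable is even.

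Third step: the $2$-adic analysis. This mirrors the mod $4$ argument in the lemma showing $2\nmid v$ and $v\mathbf{p}^\mathbf{k}\mathbf{q}^\mathbf{l}\equiv 1\bmod 4$. Suppose $N\equiv 3\bmod 4$ and take a primitive $2$-adic solution. If $z$ is odd, reducing mod $4$ gives $x^2+y^2\equiv 3 \bmod 4$, which is impossible. If $z$ is even, then $ax^2+by^2\equiv 0\bmod 4$ forces $x,y$ both even, contradicting primitivity. Hence $N\equiv 1\bmod 4$. Multiplying by $\mathbf{p}^\mathbf{k}\mathbf{q}^\mathbf{l}$, whose square is $1 \bmod 4$, gives $v\equiv\mathbf{p}^\mathbf{k}\mathbf{q}^\mathbf{l}\bmod 4$.

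The step needing most care is the first: matching each factor of $\varrho$ with the correct local solvability criterion, especially at $p\mid\mathbf{p}^\mathbf{k}$ and $p\mid\mathbf{q}^\mathbf{l}$, where $p$ divides two coefficients. This is handled by the $p$-adic rescaling above.

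Alternatively, one can avoid the geometry and expand $\varrho\neq0$ as a product of Jacobi symbol identities, then apply quadratic reciprocity to $\left(\frac{-ab}{v}\right)\left(\frac{bN}{a/\mathbf{p}^\mathbf{k}}\right)\cdots=1$. The only surviving sign would be $(-1)^{(N-1)/2}$, since $a\equiv b\equiv1\bmod 4$ kills all other reciprocity signs. If the Hilbert-symbol bookkeeping becomes messy, I would fall back on this computation.
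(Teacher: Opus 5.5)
Your proof is correct, but your main route differs from the paper's. The paper's argument is exactly your fallback. Because no Legendre symbol in $\varrho_{(\mathbf{k},\mathbf{l}),v}$ can vanish, $\varrho\neq 0$ forces the product of all these symbols to equal $1$. The paper rewrites that product as Jacobi symbols and cites quadratic reciprocity without carrying out the computation.

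You instead read the factors of $\varrho$ as the conditions for local solvability at odd primes of $ax^2+by^2=Nz^2$, where $N=v\mathbf{p}^\mathbf{k}\mathbf{q}^\mathbf{l}$. You add the real place, use Hilbert reciprocity to transfer solvability to $\mathbb{Q}_2$, and finish with the same mod $4$ argument as the paper's earlier lemma giving $v\mathbf{p}^\mathbf{k}\mathbf{q}^\mathbf{l}\equiv 1 \bmod 4$.

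What your route buys:
\begin{itemize}
\item It avoids the sign bookkeeping the paper suppresses. There one must simplify the numerators $av\mathbf{p}^\mathbf{k}\mathbf{q}^\mathbf{l}/p^2$ and $bv\mathbf{p}^\mathbf{k}\mathbf{q}^\mathbf{l}/p^2$. Then one collects the reciprocity signs among $a/\mathbf{p}^\mathbf{k}$, $\mathbf{p}^\mathbf{k}$, $b/\mathbf{q}^\mathbf{l}$, $\mathbf{q}^\mathbf{l}$ and $v$, using $a/\mathbf{p}^\mathbf{k}\equiv\mathbf{p}^\mathbf{k}$ and $b/\mathbf{q}^\mathbf{l}\equiv\mathbf{q}^\mathbf{l} \bmod 4$, until only $(-1)^{(N-1)/2}$ survives.
\item It explains why $\varrho$ appears at all: $\varrho\neq 0$ is exactly everywhere-local solvability of $Q_{(\mathbf{k},\mathbf{l}),v}$.
\end{itemize}
It is not more elementary, since Hilbert reciprocity is quadratic reciprocity with its supplements in another form.

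One tightening: the Hilbert symbol you leave unspecified should be $(aN,bN)_v$.
\begin{itemize}
\item At each odd $p\mid abN$, the tame-symbol formula gives exactly the corresponding factor of $\varrho$.
\item At the real place, $(aN,bN)_\infty=1$ because $a,b,N>0$.
\item At $2$, $(aN,bN)_2=(-1)^{(N-1)/2}$ because $aN\equiv bN\equiv N \bmod 4$.
\end{itemize}
So the product formula gives the lemma in one line, and your case-by-case Hensel lifts and the final mod $4$ argument become unnecessary.
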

\begin{proof}
Assume $\varrho_{(\mathbf{k},\mathbf{l}),v} \neq 0$. Then since none of the Legendre symbols can be zero we obtain 
{
\begin{align*}
1 &=\prod_{p \vert \frac{a}{\mathbf{p}^\mathbf{k}}} \left(\frac{bv\mathbf{p}^k\mathbf{q}^\mathbf{l}}{p} \right)\prod_{p \vert \frac{b}{\mathbf{q}^\mathbf{l}}} \left(\frac{av\mathbf{p}^k\mathbf{q}^\mathbf{l}}{p} \right)\prod_{p \vert v} \left(\frac{-ab}{p} \right)\prod_{p \vert \mathbf{q}^\mathbf{l}} \left( \frac{ \frac{bv\mathbf{p}^\mathbf{k}\mathbf{q}^{\mathbf{l}}}{p^2}}{p} \right)
\prod_{p \vert \mathbf{p}^\mathbf{k}} \left( \frac{ \frac{av\mathbf{p}^\mathbf{k}\mathbf{q}^{\mathbf{l}}}{p^2}}{p} \right)\\
&=\left(\frac{bv\mathbf{p}^k\mathbf{q}^\mathbf{l}}{\frac{a}{\mathbf{p}^\mathbf{k}}} \right) \left(\frac{av\mathbf{p}^k\mathbf{q}^\mathbf{l}}{\frac{b}{\mathbf{q}^\mathbf{l}}} \right)\left(\frac{-ab}{v} \right)\prod_{p \vert \mathbf{q}^\mathbf{l}} \left( \frac{ \frac{bv\mathbf{p}^\mathbf{k}\mathbf{q}^{\mathbf{l}}}{p^2}}{p} \right)
\prod_{p \vert \mathbf{p}^\mathbf{k}} \left( \frac{ \frac{av\mathbf{p}^\mathbf{k}\mathbf{q}^{\mathbf{l}}}{p^2}}{p} \right),
\end{align*}}
where we have in the last line jacobi symbols. 
Together with quadratic reciprocity we get the lemma.
\end{proof}

We define 
\begin{align*}
    f(v) = \frac{\mu^2(v)\gamma(v)}{v^\frac{3}{2}}\prod_{p\mid v} \left(  1+ \left( \frac{-ab}{p}\right)\right).
\end{align*}
The function $f$ is multiplicative in $v$. Furthermore we observe 
\begin{align*}
    \prod_{p \vert \mathbf{q}^\mathbf{l}}\left(1 + \left( \frac{ \frac{bv\mathbf{p}^\mathbf{k}\mathbf{q}^{\mathbf{l}}}{p^2}}{p} \right)\right)\prod_{p \vert \mathbf{p}^\mathbf{k}}\left(1 + \left( \frac{ \frac{av\mathbf{p}^\mathbf{k}\mathbf{q}^{\mathbf{l}}}{p^2}}{p} \right)\right)\\=  \prod_{p \vert \mathbf{q}^\mathbf{l}}\left(1 + \left( \frac{\frac{bv \mathbf{p}^\mathbf{k}}{\mathbf{q}^\mathbf{l}}}{p} \right)\right)\prod_{p \vert \mathbf{p}^\mathbf{k}}\left(1 + \left( \frac{\frac{av\mathbf{q}^\mathbf{l}}{\mathbf{p}^\mathbf{k}}}{p} \right)\right).
\end{align*}
Therefore we obtain
\begin{align}
\frac{C}{R} =\sum_{\mathbf{k}\in \{0,1\}^n}\sum_{\mathbf{l}\in \{0,1\}^m} \frac{1}{(\mathbf{p}^{\mathbf{k}}  \mathbf{q}^{\mathbf{l}})^{\frac{1}{2}}}\sum_{\substack{v\\  \gcd(v,2ab)=1}} f(v) \prod_{p|\frac{a}{\mathbf{p}^\mathbf{k}}}  \left(1 + \left( \frac{vb\mathbf{p}^{\mathbf{k}}\mathbf{q}^{\mathbf{l}}}{p}\right)\right) 
    \prod_{p|\frac{b}{\mathbf{q}^\mathbf{l}}}\\ \times    \left(1 + \left( \frac{va\mathbf{p}^{\mathbf{k}}\mathbf{q}^{\mathbf{l}}}{p}\right)\right)\prod_{p \vert \mathbf{q}^\mathbf{l}}\left(1 + \left( \frac{\frac{bv \mathbf{p}^\mathbf{k}}{\mathbf{q}^\mathbf{l}}}{p} \right)\right)
    \prod_{p \vert \mathbf{p}^\mathbf{k}}\left(1 + \left( \frac{\frac{av\mathbf{q}^\mathbf{l}}{\mathbf{p}^\mathbf{k}}}{p} \right)\right) \notag
    \end{align}
    We write the products in the last line as sums and obtain 
    \begin{align}
    \frac{C}{R}=\sum_{\mathbf{k}\in \{0,1\}^n}\sum_{\mathbf{l}\in \{0,1\}^m} \frac{1}{(\mathbf{p}^{\mathbf{k}}  \mathbf{q}^{\mathbf{l}})^{\frac{1}{2}}}\sum_{d\vert \frac{a}{\mathbf{p}^{\mathbf{k}}}} \sum_{e\vert \frac{b}{\mathbf{q}^{\mathbf{l}}}} \sum_{r\vert \mathbf{p}^\mathbf{k}}\sum_{s\vert \mathbf{q}^\mathbf{l}}\sum_{\substack{v\\ gcd(v,2ab)=1}} f(v)\\ \times  \left( \frac{vb\mathbf{p}^{\mathbf{k}}\mathbf{q}^{\mathbf{l}}}{d} \right) \left( \frac{va\mathbf{p}^{\mathbf{k}}\mathbf{q}^{\mathbf{l}}}{e} \right)
    \left( \frac{\frac{av\mathbf{q}^\mathbf{l}}{\mathbf{p}^\mathbf{k}}}{r} \right)
    \left( \frac{\frac{bv \mathbf{p}^\mathbf{k}}{\mathbf{q}^\mathbf{l}}}{s} \right).\notag
\end{align}
Now we observe that 
\begin{align*}
    f(v) \left( \frac{vb\mathbf{p}^{\mathbf{k}}\mathbf{q}^{\mathbf{l}}}{d} \right) \left( \frac{va\mathbf{p}^{\mathbf{k}}\mathbf{q}^{\mathbf{l}}}{e} \right)\left( \frac{\frac{av\mathbf{q}^\mathbf{l}}{\mathbf{p}^\mathbf{k}}}{r} \right)
    \left( \frac{\frac{bv \mathbf{p}^\mathbf{k}}{\mathbf{q}^\mathbf{l}}}{s} \right) \notag
\end{align*}
 is multiplicative in $v$.\\ 
We compute $f(1)=1$ and $f(p)=\frac{1}{p^\frac{3}{2}}\left( 1 + \frac{1}{p} \right)^{-1}\left(  1+ \left( \frac{-ab}{p}\right)\right).$
So in total 
\begin{align}\label{eq:C_final}
    \frac{C}{R}= \sum_{\mathbf{k}\in \{0,1\}^n}\sum_{\mathbf{l}\in \{0,1\}^m} \frac{1}{(\mathbf{p}^{\mathbf{k}}  \mathbf{q}^{\mathbf{l}})^{\frac{1}{2}}}Z(\mathbf{k},\mathbf{l})
\end{align}
with
\begin{align*}
    Z(\mathbf{k},\mathbf{l})
    =\sum_{d\vert \frac{a}{\mathbf{p}^{\mathbf{k}}}} \sum_{e\vert \frac{b}{\mathbf{q}^{\mathbf{l}}}} \sum_{r\vert \mathbf{p}^\mathbf{k}}\sum_{s\vert \mathbf{q}^\mathbf{l}}\prod_{p \nmid 2ab}\Biggl( \left( \frac{b\mathbf{p}^{\mathbf{k}}\mathbf{q}^{\mathbf{l}}}{d} \right) \left( \frac{a\mathbf{p}^{\mathbf{k}}\mathbf{q}^{\mathbf{l}}}{e} \right)
    \left( \frac{\frac{a\mathbf{q}^\mathbf{l}}{\mathbf{p}^\mathbf{k}}}{r} \right)
    \left( \frac{ \frac{b\mathbf{p}^\mathbf{k}}{\mathbf{q}^\mathbf{l}}}{s} \right)\\
    + \frac{1}{p^\frac{3}{2}}\left( 1 + \frac{1}{p} \right)^{-1}\left(  1+ \left( \frac{-ab}{p}\right)\right)\left( \frac{pb\mathbf{p}^{\mathbf{k}}\mathbf{q}^{\mathbf{l}}}{d} \right) \left( \frac{pa\mathbf{p}^{\mathbf{k}}\mathbf{q}^{\mathbf{l}}}{e} \right)
    \left( \frac{\frac{ap\mathbf{q}^\mathbf{l}}{\mathbf{p}^\mathbf{k}}}{r} \right)
    \left( \frac{\frac{bp \mathbf{p}^\mathbf{k}}{\mathbf{q}^\mathbf{l}}}{s} \right)\Biggl)
\end{align*}
and for $c_{\text{CLT-BT}}$ we obtain 
\begin{align}\label{eq:new_conjecture_c_final}
    \frac{c_{\text{CLT-BT}}}{R} = \sum_{\mathbf{k}\in \{0,1\}^n}\sum_{\mathbf{l}\in \{0,1\}^m} \hat{Z}(\mathbf{k},\mathbf{l})
\end{align}
\begin{remark}\label{remark:constants_equal}
    That (\ref{eq:C_final}) and (\ref{eq:new_conjecture_c_final}) are the same can be obtained by the following argument.
In both constants write $\sum_{\mathbf{k}\in \{0,1\}^n}\sum_{\mathbf{l}\in \{0,1\}^m}$ as $\sum_{\mathbf{p}^\mathbf{k}\vert a}\sum_{\mathbf{q}^\mathbf{l}\vert b}$. Then 
\begin{align*}
    (\mathbf{p}^\mathbf{k}, \mathbf{q}^\mathbf{l},d,e,r,s)\mapsto \left(dr,es,\frac{\mathbf{p}^\mathbf{k}}{r},\frac{\mathbf{q}^\mathbf{l}}{s},r,s\right)
\end{align*}
is a bijective change of variables. This bijection  transforms the weight $\frac{1}{\sqrt{\mathbf{p}^{\mathbf{k}}\mathbf{q}^{\mathbf{l}}}}$ into $(ders)^{-\frac{1}{2}}$ and converts the character occurring in the Euler product. If $\mathbf{p}^{\mathbf{k}}\mathbf{q}^{\mathbf{l}}\equiv 1\pmod 4$, the required form follows directly from quadratic reciprocity. In the case $\mathbf{p}^{\mathbf{k}}\mathbf{q}^{\mathbf{l}}\equiv 3\pmod 4$, one additionally applies the complementary involution \begin{align*}
(\mathbf{p}^{\mathbf{k}},\mathbf{q}^{\mathbf{l}},d,e,r,s)\mapsto\left(\frac{a}{\mathbf{p}^\mathbf{k}},\frac{b}{\mathbf{q}^\mathbf{l}},r,s,d,e\right).
\end{align*} Expanding the absolutely convergent Euler product shows that the factor $\left(1+\bigl(\frac{-ab}{\ell}\bigr)\right)$ for every prime $l \nmid 2ab$ precisely compensates for the additional signs arising from quadratic reciprocity. Hence both congruence classes lead to the same desired expression for the complete sum, although the corresponding identity does not in general hold term wise.
\end{remark}
This completes the proof of Theorem~\ref{thm:constants_equal}.

\section*{Declaration of AI use}
With the help of GPT-5.6 Sol, the constructions in Subsection \ref{sec:new_parametrization} were developed.
In addition, a discussion with GPT-5.6 Sol was helpful in developing the final change of variables in Remark \ref{remark:constants_equal}.
\section*{Acknowledgments}
The author would like to thank Marta Pieropan, Damaris Schindler and Florian Wilsch for discussions around this project. Moreover, the author thanks Alec Shute for his insightful comments and suggestions, which led to an improvement of the error term in the asymptotic formula for $N_S(B)$.

\printbibliography

\end{document}